\theoremstyle{plain} 
\newtheorem{thm}{Theorem}[section]
\newtheorem*{thm*}{Theorem}
\newtheorem{lem}[thm]{Lemma}
\newtheorem{cor}[thm]{Corollary}
\newtheorem*{cor*}{Corollary}
\newtheorem{pro}[thm]{Proposition}
\newtheorem*{pro*}{Proposition}
\newtheorem{defn}[thm]{Definition}
\newtheorem*{defn*}{Definition}
\theoremstyle{definition} 
\newtheorem*{example*}{Example}
\newtheorem{rem}[thm]{Remark}
\newtheorem*{rem*}{Remark}
\renewcommand{\Re}{\mathrm{Re}\,}
\renewcommand{\Im}{\mathrm{Im}\,}
\newcommand{\re}{\mathrm{Re}\,}
\newcommand{\im}{\mathrm{Im}\,}
\newcommand{\E}{{\mathbb E }}
\newcommand{\R}{{\mathbb R }}
\newcommand{\N}{{\mathbb N}}
\renewcommand{\P}{{\mathbb P}}
\newcommand{\C}{{\mathbb C}}
\newcommand{\ii}{\mathrm{i}}
\newcommand{\deq}{\mathrel{\mathop:}=}
\newcommand{\e}[1]{\mathrm{e}^{#1}}
\newcommand{\ntr}{\mathrm{tr}\,}
\newcommand{\dd}{\mathrm{d}}
\newcommand{\ie}{{\it i.e.\ }}
\newcommand{\eg}{{\it e.g.\ }}
\newcommand{\cf}{{\it cf.\ }}
\newcommand{\PP}{\Phi}
\newcommand{\dL}{\mathrm{d}_{\mathrm{L}}}
\newcommand{\wt}{\widetilde}
\newcommand{\bs}{\boldsymbol}
\newcommand{\la}{\langle}
\newcommand{\ra}{\rangle}
\renewcommand{\mathbf}[1]{\bs{#1}}
 \numberwithin{equation}{section} 
\numberwithin{thm}{section}
\newcommand\blfootnote[1]{%
  \begingroup
  \renewcommand\thefootnote{}\footnote{#1}%
  \addtocounter{footnote}{-1}%
  \endgroup
}
\begin{document}
 
\begin{minipage}{0.85\textwidth}
 \vspace{2.2cm}
    \end{minipage}
 
 \begin{center}
 \large\bf
 Convergence Rate for Spectral Distribution of Addition of Random Matrices\blfootnote{
 \begin{flushleft}
 {\textsuperscript{$*$}Supported by ERC Advanced Grant RANMAT No.\ 338804}\\
 {\textsuperscript{$\dagger$}Partially supported by ERC Advanced Grant RANMAT No.\ 338804}\\
{\hspace{4pt}\textit{Keywords}:  Random matrices, strong local law, free convolution, convergence rate}\\
{\hspace{4pt}\textit{AMS Subject Classification (2010)}: 46L54, 60B20}\\
{\hspace{4pt}\textit{Date}: June 9, 2016 }\\
 \end{flushleft}}
 \end{center}

\vspace{1.5cm}
\begin{center}
 \begin{minipage}{0.3\textwidth}
\begin{center}
Zhigang Bao\textsuperscript{$*$} \\
\footnotesize {IST Austria}\\
{\it zhigang.bao@ist.ac.at}
\end{center}
\end{minipage}
\begin{minipage}{0.3\textwidth}
\begin{center}
L\'aszl\'o Erd{\H o}s\textsuperscript{$\dagger$}  \\
\footnotesize {IST Austria}\\
{\it lerdos@ist.ac.at}
\end{center}
\end{minipage}
\begin{minipage}{0.3\textwidth}
 \begin{center}
Kevin Schnelli\textsuperscript{$*$}\\
\footnotesize 
{IST Austria}\\
{\it kevin.schnelli@ist.ac.at}
\end{center}
\end{minipage}

\end{center}
\vspace{1.5cm}

\begin{center}
 \begin{minipage}
 {0.9\textwidth}
\small
\hspace{10pt}
 Let $A$ and $B$ be two $N$ by $N$ deterministic Hermitian matrices and let $U$ be an $N$ by $N$ Haar distributed unitary matrix. It is well known that the 
 spectral distribution of the sum $H=A+UBU^*$ converges weakly to the free additive convolution of the spectral distributions of $A$ and $B$, as $N$ tends to infinity. We establish the optimal convergence rate ${\frac{1}{N}}$ in the bulk of the spectrum.
\end{minipage}
\end{center}

\vspace{1cm}
 
\thispagestyle{headings}

\section{Introduction}
In the influential work~\cite{Voi91}, Voiculescu showed that two independent large Hermitian matrices are asymptotically free if one of them is conjugated by a Haar distributed unitary matrix. This observation identifies the law of 
the sum of two large Hermitian matrices  in a randomly chosen relative basis. 
 More specifically, if $A=A^{(N)}$ and $B=B^{(N)}$ are two sequences of deterministic $N$ by $N$ Hermitian matrices and $U$ is a Haar distributed unitary matrix, then the empirical eigenvalue distribution, $\mu_H$, of the random sum $H=A+UBU^*$ is asymptotically given by the free additive convolution, $\mu_A\boxplus\mu_B$, of the eigenvalue distributions of $A$ and $B$. A quantitative control of the closeness between $\mu_H$ and $\mu_A\boxplus\mu_B$, or the convergence rate of $\mu_H$, has been out of reach until very recently.  The first convergence rate $(\log N)^{-1/2}$ was obtained by Kargin in~\cite{Kargin2012} by using the Gromov-Milman concentration inequality for the Haar measure. Later, Kargin improved  in~\cite{Kargin} his result to $N^{-1/7}$ in the bulk of the spectrum by studying the Green function subordination property down to the scale $N^{-1/7}$. Recently, we  used in~\cite{BES15} a bootstrap argument to successively localize the Gromov-Milman inequality from larger to smaller scales, whereby we improved the convergence rate to $N^{-2/3}$.  
 
In the current paper, we establish the convergence rate $N^{-1+\gamma}$, for any given $\gamma>0$, in the bulk regime. Since the typical eigenvalue spacing in the bulk of the spectrum is $N^{-1}$, our result is optimal, up to the $N^{\gamma}$ factor. In our recent work~\cite{BES15b} on the local law of $H$ we showed that the Green function subordination property holds down to the optimal scale $N^{-1+\gamma}$; \cf Proposition~\ref{thm. entrywise estimate} below. In particular, the fluctuations of the matrix elements of the Green function $G(z)=(H-z)^{-1}$ were
shown to be of order $N^{-1/2+\gamma}$ for any fixed  $z$ in the upper half plane, $\im z>0$. 
 To get the optimal convergence rate, we need to show that the fluctuations of the normalized trace
 of the Green function, $\frac{1}{N}\mbox{Tr}\, G$, are at most of order $N^{-1+\gamma}$.
 Thus the main task is to establish the fluctuation averaging of the diagonal entries of the Green function, \ie that the fluctuations of the (weighted) average of the $G_{ii}$'s are typically as small as the square of the fluctuation of the $G_{ii}$'s; \cf~\eqref{fluctuation averaging}. 
 
 Alongside with the convergence rate of $\mu_H$ to $\mu_{A}\boxplus\mu_B$, the concentration rate of $\mu_H$ to its expectation $\E\mu_H$ is of interest. An order $N^{-1/2}$ estimate up to logarithmic corrections on the fluctuations of the distribution function was obtained by Chatterjee in~\cite{Chatterjee} by studying mixing times of random walks on the unitary group. Using the Gromov-Milman concentration inequality, Kargin removed the logarithmic corrections~\cite{Kargin2012}. More recently, a rate of order $N^{-2/3}$ in the $\mathrm{L}^1$-Wasserstein distance was obtained by E.\ Meckes and M.\ Meckes in~\cite{MeckesMeckes2013}. From our main result it follows that~$\mu_H$, when restricted to the bulk, has concentration rate $N^{-1}$.

 The fluctuation averaging mechanism is a key ingredient in
proving the optimal convergence rate of local laws for random matrices.
 It was first introduced in~\cite{EYYBer} and substantially
extended later in~\cite{EKY, EKYY13} to generalized Wigner matrices.  In all previous works, however, the proofs
heavily relied on  the independence (up to symmetry) of the matrix elements.
Our matrix $H=A+UBU^*$ lacks this independence 
since the columns of a Haar unitary matrix are dependent. This fact was already a major obstacle  in 
the proof of the optimal local law~\cite{BES15b}, where independence of columns was replaced with a specific 
{\it partial randomness
decomposition} of the Haar unitaries; see Section~\ref{s.PRD}. This decomposition, however, is not
directly compatible with taking matrix elements of the Green function, thus the fluctuation averaging mechanism
in the average $\frac{1}{N}\sum_{i=1}^N G_{ii}$ remains hidden. In fact, our proof does not attack this average directly,
we first prove fluctuation averaging for an auxiliary quantity~$Z_i$, a carefully chosen linear 
combination of $G_{ii}$ and $(UBU^*G)_{ii}$; see~\eqref{def of Z}.  In the quantity~$Z_i$ certain fluctuations of
order $N^{-1/2}$ cancel for an algebraic reason. When passing from~$Z_i$ to the original $G_{ii}$, we need
to introduce an additional specially chosen quantity~$\Upsilon$, see~\eqref{definition of Upsilon}, that averages the effect of the fluctuations of order $N^{-1/2}$. Only {\it a posteriori} we show that~$\Upsilon$ is in fact one order better than its naive size indicates. Identifying these somewhat  counter-intuitive quantities for the fluctuation averaging is one of the main novelties of the current work.
 
 Another key feature of the proof is that we do not directly compute high moments of the averages as it was customary in the previous proofs that led to involved expansions whose bookkeeping was  quite tedious. 
 Instead, we estimate the higher moments recursively, in terms of the lower moments, see Lemma~\ref{lem. 022301},
 whose proof relies on integration by parts for Gaussian variables. This 
 method to prove fluctuation averaging was recently introduced in~\cite{LS16} in the context of sparse
 Wigner matrices.  In the current setup, circumventing the high moment calculation is a very
 important  asset, due to the complexity of
 the partial randomness decomposition of the Haar measure and the numerous error terms involved in
 the necessary Gaussian approximation.

{\it Notation:}
 We use $C$ to denote strictly positive constants that do not depend on $N$. Their values may change from line to line. For $a,b\ge0$, we write $a\lesssim b$, $a\gtrsim b$ if there is $C\ge1$ such that $a\le Cb$, $a\geq C^{-1} b$ respectively. We denote for $z\in\C^+$ the real part by $E=\re z$ and the imaginary part by $\eta=\im z$.

We use bold font for vectors in $\mathbb{C}^N$, denote their components by $\mathbf{v}=(v_1,\ldots,v_N)\in \mathbb{C}^N$ and their Euclidean norm by $\|\mathbf{v}\|_2$. The canonical basis of $\C^N$ is denoted by $(\mathbf{e}_i)_{i=1}^N$. We denote by $M_N(\mathbb{C})$ the set of $N\times N$ matrices over $\mathbb{C}$. For $A\in M_N(\mathbb{C})$, we denote by $\|A\|$ its operator norm and by $\|A\|_2$ its Hilbert-Schmidt norm.  The matrix entries of $A$ are denoted by $A_{ij}=\mathbf{e}_i^*A\mathbf{e}_j$. We use $\ntr A$ to denote the normalized trace of $A$, \ie $\ntr A=\frac{1}{N}\sum_{i=1}^N A_{ii}$. 

Let $\mathbf{g}=(g_1,\ldots, g_N)$ be a real or complex Gaussian vector. We write $\mathbf{g}\sim \mathcal{N}_{\mathbb{R}}(0,\sigma^2I_N)$ if $g_1,\ldots, g_N$ are i.i.d. $N(0,\sigma^2)$ normal variables; and we write $\mathbf{g}\sim \mathcal{N}_{\mathbb{C}}(0,\sigma^2I_N)$ if $g_1,\ldots, g_N$ are i.i.d. $N_{\mathbb{C}}(0,\sigma^2)$ variables, where $g_i\sim N_{\mathbb{C}}(0,\sigma^2)$ means that $\Re g_i$ and $\Im g_i$ are independent $N(0,\frac{\sigma^2}{2})$ normal variables. Finally, we use double brackets to denote index sets, \ie for $n_1,n_2\in\mathbb{R}$, $\llbracket  n_1,n_2\rrbracket:=[n_1,n_2]\cap \mathbb{Z}$.

\section{Main results} \label{s.results}
\subsection{Free additive convolution} For the reader's convenience we recall from~\cite{BES15}  some basic notions and results for the free additive convolution.

Given a probability measure $\mu$ on $\mathbb{R}$,  its {\it Stieltjes transform}, $m_\mu$, on the complex upper half-plane $\mathbb{C}^+\deq\{z\in \mathbb{C}: \Im z>0\}$ is defined by 
\begin{align*}
m_\mu(z)\deq \int_{\mathbb{R}} \frac{{\rm d} \mu(x)}{x-z},\qquad\qquad z\in \mathbb{C}^+.
\end{align*}
Note that $m_{\mu}\,:\,\C^+\rightarrow \C^+$ is an analytic function such that
\begin{align}\label{le limit to be a probablity measure}
 \lim_{\eta\nearrow\infty} \ii \eta\, m_\mu(\ii\eta)=-1.
\end{align}
Conversely, if $m\,:\, \C^+\rightarrow \C^+$ is an analytic function such that $\lim_{\eta\nearrow\infty} \ii \eta\, m(\ii\eta)=-1$, then $m$ is the Stieltjes transform of a probability measure $\mu$. Let $F_\mu$ be the {\it negative reciprocal Stieltjes transform} of $\mu$,
\begin{align}\label{le F definition}
 F_{\mu}(z)\deq -\frac{1}{m_{\mu}(z)},\qquad \qquad z\in\C^+.
\end{align}
Observe that
 \begin{align}\label{le F behaviour at infinity}
\lim_{\eta\nearrow \infty}\frac{F_{\mu}(\ii\eta)}{\ii\eta}=1,
\end{align}
as follows from~\eqref{le limit to be a probablity measure}. Note, moreover, that $F_\mu$ is analytic on $\C^+$ with nonnegative imaginary part.

The {\it free additive convolution} is the symmetric binary operation on probability measures on $\R$ characterized by the following result.
\begin{pro}[Theorem 4.1 in~\cite{BB}, Theorem~2.1 in~\cite{CG}]\label{le prop 1}
Given two probability measures, $\mu_1$ and $\mu_2$, on $\R$, there exist unique analytic functions, $\omega_1,\omega_2\,:\,\C^+\rightarrow \C^+$, such that,
 \begin{itemize}[noitemsep,topsep=0pt,partopsep=0pt,parsep=0pt]
  \item[$(i)$] for all $z\in \C^+$, $\im \omega_1(z),\,\im \omega_2(z)\ge \im z$, and
  \begin{align}\label{le limit of omega}
  \lim_{\eta\nearrow\infty}\frac{\omega_1(\ii\eta)}{\ii\eta}=\lim_{\eta\nearrow\infty}\frac{\omega_2(\ii\eta)}{\ii\eta}=1\,;
  \end{align}
  \item[$(ii)$] for all $z\in\C^+$, 
  \begin{align}\label{le definiting equations}
   F_{\mu_1}(\omega_{2}(z))=F_{\mu_2}(\omega_{1}(z)),\qquad\qquad \omega_1(z)+\omega_2(z)-z=F_{\mu_1}(\omega_{2}(z)).
  \end{align}
 \end{itemize}
\end{pro}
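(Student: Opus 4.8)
The plan is to realize $\omega_1$ as the unique fixed point of an explicit holomorphic self-map of $\C^+$ and then to read off $\omega_2$. For a probability measure $\mu$ write $h_\mu\deq F_\mu-\mathrm{id}$. A Cauchy--Schwarz estimate for $m_\mu(z)=\int(x-z)^{-1}\dd\mu(x)$ gives $\im F_\mu(z)\ge\im z$, \ie $\im h_\mu\ge0$, on $\C^+$; moreover $\im h_\mu\equiv0$ would force $h_\mu$ to be a real constant and hence $\mu$ a Dirac mass, so---a nonnegative harmonic function vanishing at an interior point being identically zero---$\im h_\mu>0$ on all of $\C^+$ as soon as $\mu$ is not a Dirac mass. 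Fix $z\in\C^+$ and set $f_z(w)\deq z+h_{\mu_1}\big(z+h_{\mu_2}(w)\big)$. If $\mu_1$ or $\mu_2$ is a Dirac mass then $f_z$ is a constant map and the statement is elementary, so assume henceforth that neither is. Then $\im h_{\mu_1},\im h_{\mu_2}>0$, so each of $w\mapsto z+h_{\mu_1}(w)$ and $w\mapsto z+h_{\mu_2}(w)$ maps $\C^+$ into $\{w:\im w>\im z\}\subset\C^+$, and hence $f_z$ is a holomorphic self-map of $\C^+$ whose image is a proper subset of $\C^+$. Rearranging the two relations in~\eqref{le definiting equations}, one sees that a pair $(\omega_1,\omega_2)$ of maps $\C^+\to\C^+$ satisfies $(ii)$ exactly when $\omega_2=z+h_{\mu_2}(\omega_1)$ and $\omega_1=f_z(\omega_1)$. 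Thus it suffices to show: $f_z$ has a unique fixed point $\omega_1(z)\in\C^+$, depending analytically on $z$, and the resulting $\omega_1,\omega_2$ satisfy $(i)$.

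For the fixed point I would use the iteration theory of holomorphic self-maps of $\C^+$. As noted, $f_z$ is not an automorphism of $\C^+$ (its image is a proper subset), and it is clearly not the identity; hence, by the Denjoy--Wolff theorem, the iterates $f_z^{\circ n}(w_0)$ converge locally uniformly to a point $\tau\in\C^+\cup\R\cup\{\infty\}$, which is the unique fixed point of $f_z$ in $\C^+$ whenever one exists. One has $\tau\notin\R$ because $\im f_z^{\circ n}(w_0)\ge\im z>0$ for $n\ge1$. The main obstacle is to rule out $\tau=\infty$: this is where the normalization~\eqref{le F behaviour at infinity} enters---via the Nevanlinna representations of the $F_{\mu_j}$ it strengthens to $h_{\mu_j}(w)=o(w)$ under \emph{non-tangential} approach to $\infty$---and one must argue that the nested applications of $h_{\mu_2}$ and $h_{\mu_1}$ inside $f_z$ confine the orbit to a region where this decay is effective, so that $\{f_z^{\circ n}(w_0)\}$ stays in a compact subset of $\C^+$; the delicacy is precisely that the limit in~\eqref{le F behaviour at infinity} fails along tangential sequences. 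Granting this, $\tau=\omega_1(z)$ is the unique fixed point of $f_z$ in $\C^+$. Analyticity of $z\mapsto\omega_1(z)$ then follows from the holomorphic implicit function theorem for $(z,w)\mapsto f_z(w)-w$ at $w=\omega_1(z)$---the $w$-derivative $f_z'(\omega_1(z))-1$ being nonzero since $|f_z'(\omega_1(z))|<1$, as $f_z$ is not an automorphism---or, alternatively, from Vitali's theorem applied to the locally bounded, analytic-in-$z$ iterates.

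It remains to set $\omega_2\deq z+h_{\mu_2}(\omega_1)$ and to check $(i)$. The bounds $\im\omega_1,\im\omega_2\ge\im z$ are immediate from $\im h_{\mu_j}\ge0$, and $(ii)$ holds by construction: $\omega_1=f_z(\omega_1)=z+h_{\mu_1}(\omega_2)$ gives $F_{\mu_1}(\omega_2)=\omega_1+\omega_2-z$, and $\omega_2=z+h_{\mu_2}(\omega_1)$ gives $F_{\mu_2}(\omega_1)=\omega_1+\omega_2-z$, so both identities in~\eqref{le definiting equations} hold. The limits~\eqref{le limit of omega} take a little more: from $\im\omega_j(\ii\eta)\ge\eta$ one gets $\omega_j(\ii\eta)\to\infty$, and this has to be upgraded to $\omega_j(\ii\eta)\sim\ii\eta$ using the functional equations~\eqref{le definiting equations} together with the behaviour of $F_{\mu_j}$ at infinity recorded in~\eqref{le F behaviour at infinity}. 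Uniqueness of the pair among maps $\C^+\to\C^+$ satisfying $(ii)$ is then immediate, since by the rearrangement above any such $\omega_1$ is a fixed point of $f_z$ in $\C^+$ and that fixed point is unique. In short, the only genuinely hard steps are the two pieces of boundary analysis at $\infty$: excluding the Denjoy--Wolff value $\tau=\infty$ in the existence argument, and establishing~\eqref{le limit of omega}.
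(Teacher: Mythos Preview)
The paper does not itself prove this proposition; it is quoted (with the attribution in its heading) from \cite{BB} and \cite{CG}, so there is no in-paper proof to compare against. That said, your outline is precisely the Belinschi--Bercovici argument: rewrite $(ii)$ as the fixed-point problem $\omega_1(z)=f_z(\omega_1(z))$ for $f_z(w)=z+h_{\mu_1}\big(z+h_{\mu_2}(w)\big)$, observe that $f_z$ is a non-automorphic holomorphic self-map of $\C^+$ with image contained in $\{\im>\im z\}$, apply the Denjoy--Wolff theorem, and deduce analyticity in $z$ from the locally uniform convergence of the iterates (or, equivalently, from the implicit function theorem at the attracting fixed point). You have also correctly isolated the two places where real work remains---ruling out the Denjoy--Wolff value $\tau=\infty$, and upgrading $\omega_j(\ii\eta)\to\infty$ to $\omega_j(\ii\eta)\sim\ii\eta$---and you are candid that your sketch does not close them. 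Both are handled in the cited references via the Nevanlinna representation of $F_{\mu_j}$ combined with Julia--Carath\'eodory theory at $\infty$; neither step is entirely elementary, so identifying them as the substantive content is exactly right.
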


It follows from~\eqref{le limit of omega} that the analytic function $F\,:\,\C^+\rightarrow \C^+$ defined by
\begin{align}\label{le kkv}
 F(z)\deq F_{\mu_1}(\omega_{2}(z))=F_{\mu_2}(\omega_{1}(z)),
\end{align}
satisfies the analogue of~\eqref{le F behaviour at infinity}. Thus $F$ is the negative reciprocal Stieltjes transform of a probability measure $\mu$, called the free additive convolution of $\mu_1$ and $\mu_2$,  usually denoted by $\mu\equiv\mu_1\boxplus\mu_2$. The functions $\omega_1$ and $\omega_2$ of Proposition~\ref{le prop 1} are called {\it subordination functions} and $F$ is said to be subordinated to~$F_{\mu_1}$, respectively to $F_{\mu_2}$. To exclude trivial shifts of measures, we henceforth assume that both,~$\mu_1$ and~$\mu_2$, are supported at more than one point. Then the analytic functions $F$, $\omega_1$ and $\omega_2$ extend continuously to the real line~\cite{Bel1,Bel}. The subordination phenomenon
was first  observed by Voiculescu~\cite{Voi93} in a generic situation
and extended to full generality by Biane~\cite{Bia98}.

We next recall the notion of {\it regular bulk} of $\mu_1\boxplus\mu_2$ introduced in~\cite{BES15b}. Let
\begin{align}\label{le set U}
\mathcal{U}_{\mu_1\boxplus\mu_2}\deq\mathrm{int}\,\Big\{\text{supp} (\mu_1\boxplus\mu_2)^{\mathrm{ac}}\,\big\backslash\,\{ x\in \R\,:\, \lim_{\eta\searrow 0}F_{\mu_1\boxplus\mu_2}(x+\ii\eta)=0\} \Big\}\, ,
\end{align} 
where $\text{supp} (\mu_1\boxplus\mu_2)^{\mathrm{ac}}$ denotes the support of the absolutely continuous part of $\mu_1\boxplus\mu_2$. We denote the density function of $(\mu_1\boxplus\mu_2)^{\mathrm{ac}}$ by $f_{\mu_1\boxplus\mu_2}$. Then the {\it regular bulk} of $\mu_1\boxplus \mu_2$ is defined as
\begin{align}
 \mathcal{B}_{\mu_1\boxplus\mu_2}\deq\mathcal{U}_{\mu_1\boxplus\mu_2}\,\backslash\, \left\{x\in\mathcal{U}_{\mu_1\boxplus\mu_2}\,:\,f_{\mu_1\boxplus\mu_2}	(x)=0\right\}.
\end{align}
 In short, regular bulk is the  regime  where the density is nonzero but finite. Finally, by general results of~\cite{Bel2}, the regular bulk contains at least one open interval; see Section~2.1 in~\cite{BES15} for detail.

\subsection{Random matrix model}
Let $A\equiv A^{(N)}$ and $B\equiv B^{(N)}$ be two sequences of deterministic real diagonal matrices in $M_N(\C)$, whose {\it empirical eigenvalue distributions} are denoted by~$\mu_A$ and~$\mu_B$, respectively. More precisely,
\begin{align}\label{le empirical measures of A and B}
 \mu_{A}\deq\frac{1}{N}\sum_{i=1}^N\delta_{a_i},\qquad\qquad\mu_{B}\deq\frac{1}{N}\sum_{i=1}^N\delta_{b_i},
\end{align}
with $A=\mathrm{diag}(a_i)$, $B=\mathrm{diag}(b_i)$. The matrices $A$ and $B$ actually depend on $N$, but we omit this from our notation. Throughout the paper, we assume that
\begin{align}\label{le bounded A and B}
\|A\|, \|B\| \leq C,
\end{align}
for some positive constant $C$ uniform in $N$. Proposition~\ref{le prop 1} asserts the existence of unique analytic functions~$\omega_A$ and $\omega_B$ satisfying the analogue of~\eqref{le limit of omega} such that, for all $z\in\C^+$,
\begin{align}\label{060101}
   F_{\mu_A}(\omega_{B}(z))=F_{\mu_B}(\omega_{A}(z)),\qquad\qquad \omega_A(z)+\omega_B(z)-z=F_{\mu_A}(\omega_{B}(z)).
\end{align}

We will assume that there are deterministic probability measures $\mu_\alpha$ and $\mu_\beta$ on $\R$, neither of them being a single point mass, such that the empirical spectral distributions $
 \mu_A$ and $\mu_B$ converge weakly, as $N\to\infty$, to $\mu_\alpha$ and $ \mu_\beta$, respectively. More precisely, we assume that 
\begin{align}\label{le assumptions convergence empirical measures}
\dL(\mu_A,\mu_\alpha)+\dL(\mu_B,\mu_\beta)\to 0,
\end{align}
as $N\to \infty$, where $\dL$ denotes the L\'evy distance. Proposition~\ref{le prop 1} asserts that there are unique analytic functions $\omega_\alpha$, $\omega_\beta$  satisfying the analogue of~\eqref{le limit of omega} such that, for all $z\in\C^+$,
\begin{align}\label{060102}
F_{\mu_\alpha}(\omega_{\beta}(z))=F_{\mu_\beta}(\omega_{\alpha}(z)),\qquad \omega_\alpha(z)+\omega_\beta(z)-z=F_{\mu_\alpha}(\omega_{\beta}(z)).
\end{align}
Proposition~4.13 of~\cite{BeV93} states that $\dL(\mu_A\boxplus\mu_B,\mu_\alpha\boxplus\mu_\beta)\le \dL(\mu_A,\mu_\alpha)+\dL(\mu_B,\mu_\beta)$, \ie the free additive convolution is continuous with respect to weak convergence of measures. 

Denote by $ U(N) $ the unitary group of degree $N$. Let $U\in U(N) $ be distributed according to Haar measure (in short a {\it Haar unitary}), and consider the random matrix
\begin{eqnarray}\label{le our H}
H\equiv H^{(N)}\deq A+UBU^*.
\end{eqnarray}
Our results also hold for the real setup when $U$ is Haar distributed on the orthogonal group, $O(N)$, of degree $N$. For definiteness, we work with the complex setup in this paper.

\subsection{Statement of main results}
To state our main results, we rely on the following definition for high-probability estimates, which was first used in~\cite{EKY}.
\begin{defn}\label{definition of stochastic domination}
Let $X\equiv X^{(N)}$, $Y\equiv Y^{(N)}$ be $N$-dependent nonnegative random variables. We say that~$Y$ stochastically dominates~$X$ if, for all (small) $\epsilon>0$ and (large)~$D>0$,
\begin{align}
\P\big(X^{(N)}>N^{\epsilon} Y^{(N)}\big)\le N^{-D},
\end{align}
for sufficiently large $N\ge N_0(\epsilon,D)$, and we write $X \prec Y$.
 When
$X^{(N)}$ and $Y^{(N)}$ depend on a parameter $v\in V$ (typically an index label or a spectral parameter), then $X(v) \prec Y (v)$, uniformly in $v\in V$, means that the threshold $N_0(\epsilon,D)$ can be chosen independently of $v$. 
\end{defn}
In Appendix~\ref{the appendix A} we collected some properties of the relation $\prec$.

Let $H$ be given in~\eqref{le our H} and denote by $(\lambda_i)_{i=1}^N$ its eigenvalues. Let $\mu_H$ stand for the empirical eigenvalue distribution of $H$, \ie
\begin{align}
 \mu_H\deq\frac{1}{N}\sum_{i=1}^N\delta_{\lambda_i}.
\end{align}
Our result on the convergence rate of $\mu_H$ to $\mu_{A\boxplus B}$ in the bulk is as follows.

\begin{thm}[Convergence rate] \label{thm.convergence rate} 
Let $\mu_\alpha$ and $\mu_\beta$ be two compactly supported probability measures on~$\mathbb{R}$, and assume that neither is supported at a single point and that at least one of them is supported at more than two points. Assume that the sequence of matrices $A$ and $B$ in~(\ref{le our H}) satisfy~(\ref{le bounded A and B}). Fix any nonempty compact interval $\mathcal{I}\subset \mathcal{B}_{\mu_\alpha\boxplus \mu_\beta}$. Then there is a (small) constant $b>0$, depending only on the measures $\mu_\alpha$ and $\mu_\beta$, on the interval~$\mathcal{I}$ and on the constant $C$ in~\eqref{le bounded A and B}, such that whenever
\begin{align}\label{le b equation}
\dL(\mu_A,\mu_\alpha)+\dL(\mu_B,\mu_\beta)\le b,
\end{align}
then
\begin{align}
\sup_{\mathcal{I'}\subseteq \mathcal{I}}\Big|\mu_H(\mathcal{I}')-\mu_{A}\boxplus\mu_{B}(\mathcal{I}')\Big|\prec \frac{1}{N},
\end{align}
where the supremum ranges over all subintervals of $\mathcal{I}$.
\end{thm}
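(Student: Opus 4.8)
The plan is to deduce the macroscopic convergence rate from a strong local law for $H$ via a standard Stieltjes-transform inversion argument, so the real content is pushed into the local law, which in turn rests on the fluctuation-averaging estimate for the auxiliary quantity $Z_i$. Concretely, I would first invoke Proposition~\ref{thm. entrywise estimate} (the entrywise local law down to scale $N^{-1+\gamma}$) to control the diagonal Green function entries $G_{ii}(z)$, and then the main new ingredient: the fluctuation averaging estimate showing $|\ntr G(z) - m(z)| \prec (N\eta)^{-1}$ for $\eta = \im z$ down to $\eta \gtrsim N^{-1+\gamma}$, where $m$ is the Stieltjes transform of $\mu_A\boxplus\mu_B$. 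Here $m$ satisfies the self-consistent subordination equations~\eqref{060101}, and the local law says $\ntr G$ is close to $m$ with the improved (squared) error. The passage from the entrywise law to the averaged law is exactly the place where the counter-intuitive quantities $Z_i$ and $\Upsilon$ enter: one establishes fluctuation averaging for $Z_i$ first (where an $N^{-1/2}$ fluctuation cancels algebraically), estimates the higher moments of the average recursively using Gaussian integration by parts (Lemma~\ref{lem. 022301}), and then transfers back to $\frac{1}{N}\sum_i G_{ii}$ at the cost of controlling $\Upsilon$, which must be shown \emph{a posteriori} to be one order smaller than its naive size.

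Once the averaged local law $|m_{\mu_H}(z) - m(z)| \prec (N\eta)^{-1}$ is in hand uniformly for $z = E + \ii\eta$ with $E$ in a neighborhood of $\mathcal{I}$ and $N^{-1+\gamma}\le \eta \le 1$, I would pass to the counting-function / distribution-function level. Write $n_H(E) := \mu_H((-\infty, E])$ and $n(E) := (\mu_A\boxplus\mu_B)((-\infty,E])$. A Helffer--Sjöstrand representation (or the Bai inequality) expresses the difference $n_H(E_2) - n_H(E_1) - [n(E_2)-n(E_1)]$ in terms of an integral of $\im (m_{\mu_H} - m)$ over the upper half-plane against a smooth cutoff. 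Splitting the $\eta$-integration at the scale $\eta_0 = N^{-1+\gamma}$: on the range $\eta \ge \eta_0$ one uses the averaged local law to get a bound $\lesssim \int_{\eta_0}^{1} (N\eta)^{-1}\,\frac{\dd\eta}{\eta}$-type contribution, which after the $E$-integration against the test function yields something of size $N^{-1+\gamma}$; on the range $\eta < \eta_0$ one uses the a priori boundedness of $\im m_{\mu_H}$ and $\im m$ (which follows on the regular bulk from the local law at scale $\eta_0$ together with the regularity of $\mu_A\boxplus\mu_B$, whose density is bounded above and below on $\mathcal{I}$ by the assumption $\mathcal{I}\subset\mathcal{B}_{\mu_\alpha\boxplus\mu_\beta}$ and the stability of the subordination equations under~\eqref{le b equation}), giving a contribution of order $\eta_0 = N^{-1+\gamma}$ as well. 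Taking the supremum over all subintervals $\mathcal{I}'\subseteq\mathcal{I}$ and absorbing $\gamma$ into the $\prec$ relation yields the claimed bound $\prec 1/N$.

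Two technical points need care. First, the smallness hypothesis~\eqref{le b equation} and the assumption $\mathcal{I}\subset\mathcal{B}_{\mu_\alpha\boxplus\mu_\beta}$ must be combined with the continuity of $\boxplus$ under the Lévy distance (Proposition~4.13 of~\cite{BeV93}) and the stability of the subordination system~\eqref{060101}--\eqref{060102} to guarantee that $\mathcal{I}$ also lies in the regular bulk of $\mu_A\boxplus\mu_B$, with density bounded away from $0$ and $\infty$ uniformly in $N$; this is what makes the Green function entries and $\im m$ uniformly bounded and the subordination equations stable on the relevant spectral domain, and it is the reason $b$ is allowed to depend on $\mathcal{I}$ and on $C$. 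Second, in the Helffer--Sjöstrand step one needs the local law to be valid uniformly in $z$ in a full two-dimensional neighborhood of $\mathcal{I}$ in $\C^+$, including the real-part variation, which the uniformity clause in Definition~\ref{definition of stochastic domination} provides, but one must take a union bound over a polynomially fine net of spectral parameters and use the Lipschitz continuity of $G(z)$ in $z$ (with norm $\lesssim \eta^{-2}$) to upgrade the net estimate to a genuine uniform-in-$z$ estimate.

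I expect the main obstacle to be entirely inside the averaged local law, i.e.\ the fluctuation averaging mechanism for $Z_i$ and the control of $\Upsilon$: because $H = A + UBU^*$ has no independence among the columns of $U$, the usual fluctuation-averaging machinery does not apply, and one must run the recursive moment estimate of Lemma~\ref{lem. 022301} through the partial randomness decomposition of the Haar measure, keeping track of the numerous Gaussian-approximation error terms while verifying the algebraic cancellation of the $N^{-1/2}$ fluctuation in $Z_i$ and the a posteriori gain for $\Upsilon$. The Stieltjes-inversion / Helffer--Sjöstrand part, by contrast, is by now routine given a local law of this strength.
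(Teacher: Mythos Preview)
Your proposal is correct and follows essentially the same approach as the paper: deduce Theorem~\ref{thm.convergence rate} from the averaged local law (Theorem~\ref{thm.fluctuation averaging}) via a standard Helffer--Sj\"ostrand argument, with the substantive work residing in the fluctuation averaging for $Z_i$, the recursive moment estimate (Lemma~\ref{lem. 022301}), and the \emph{a posteriori} bound on $\Upsilon$. The paper in fact states the Helffer--Sj\"ostrand step in a single sentence and refers to~\cite{EKYY13} for the details, so your outline of that part is already more explicit than what the paper writes.
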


The proof of Theorem~\ref{thm.convergence rate} is based on an optimal {\it local law for the Stieltjes transform} of the empirical eigenvalue distribution of $H$ which is the main technical result of this paper.  Denote the {\it Green function} (or the resolvent) of $H$ and its normalized trace by
\begin{align}
G(z)\equiv G_{H}(z)\deq \frac{1}{H-z},\qquad m(z)=m_H(z)\deq \ntr G(z)=\frac{1}{N}\sum_{i=1}^NG_{ii}(z),  \qquad\quad z\in\C^+,\label{Green functions 1}
\end{align}
where $G_{ij}(z)$ are the matrix entries of $G(z)$. Note that $m_H$ is the Stieltjes transform of $\mu_H$, 
\begin{align}\label{link g s}
 m_H(z)=\ntr G_H(z)=\frac{1}{N}\sum_{i=1}^N\frac{1}{\lambda_i-z}=\int_\R\frac{\dd\mu_H(x)}{x-z},\qquad \qquad z\in\C^+.
\end{align}

To state our next result, we introduce the following domain of the spectral parameter $z$: For $b\geq a\geq 0$, and $\mathcal{I}\subset \mathbb{R}$, let
\begin{align}
\mathcal{S}_{\mathcal{I}}(a,b)\deq \big\{ z=E+\mathrm{i}\eta\in \mathbb{C}^+: E\in \mathcal{I},\,	 a<\eta\leq b \big\}.  \label{le domain S}
\end{align}
Throughout the paper, we use the control parameter
 \begin{align*}
 \Psi\equiv \Psi(z)\deq \frac{1}{\sqrt{N\eta}},\quad\qquad z=E+\mathrm{i}\eta\in\C^+.
 \end{align*}

We next state the local law for the Stieltjes transform of $\mu_H$.

\begin{thm}[Local law for the Stieltjes transform] \label{thm.fluctuation averaging}
Let $\mu_\alpha$, $\mu_\beta$, $A$ and $B$ satisfy the assumption of Theorem~\ref{thm.convergence rate}. Fix any nonempty compact interval $\mathcal{I}\subset \mathcal{B}_{\mu_\alpha\boxplus \mu_\beta}$. Let $d_1,\ldots, d_N\in \mathbb{C}$ be any deterministic complex numbers satisfying 
\begin{align}
\max_{i\in \llbracket 1, N\rrbracket}|d_i|\leq 1.
\end{align}
Then there is a (small) constant $b>0$, depending only on the measures $\mu_\alpha$ and $\mu_\beta$, on the interval~$\mathcal{I}$ and on the constant $C$ in~\eqref{le bounded A and B}, such that whenever~\eqref{le b equation} holds, then
\begin{align}
\Big|\frac{1}{N}\sum_{i=1}^N d_i \Big(G_{ii}(z)-\frac{1}{a_i-\omega_B(z)}\Big)\Big|\prec \Psi^2,\label{fluctuation averaging}
\end{align}
holds uniformly on $\mathcal{S}_{\mathcal{I}}(0,1)$. In particular, choosing $d_i=1$ for all $i\in \llbracket 1, N\rrbracket$,
\begin{align}
\Big|m_H(z)-m_{\mu_A\boxplus \mu_B}(z)\Big|\prec \Psi^2, \label{032301}
\end{align}
holds uniformly on $\mathcal{S}_{\mathcal{I}}(0,1)$.
\end{thm}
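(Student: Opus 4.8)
The plan is to derive the local law of Theorem~\ref{thm.fluctuation averaging} from two inputs: the optimal \emph{entrywise} local law already established in~\cite{BES15b} (Proposition~\ref{thm. entrywise estimate}), which gives $|G_{ii}(z)-(a_i-\omega_B(z))^{-1}|\prec\Psi$ together with stability of the subordination equations, and a \emph{fluctuation averaging} statement for a cleverly chosen linear functional of the resolvent. The main point is that the naive bound $\Psi$ on each $G_{ii}$, averaged trivially, only yields $\Psi$, whereas we must gain an extra factor $\Psi$ in the average $\frac1N\sum_i d_i(G_{ii}-(a_i-\omega_B)^{-1})$. The key obstacle, as the introduction stresses, is that the columns of the Haar unitary $U$ are \emph{not} independent, so the classical fluctuation-averaging arguments of~\cite{EYYBer,EKY,EKYY13}, which rely on independence of matrix entries, do not apply.

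First I would set up the partial randomness decomposition of the Haar measure (Section~\ref{s.PRD}), writing $U$ in a form that exposes a single Gaussian-like randomness direction while freezing a large complementary part; this is the substitute for the independence of columns used in~\cite{BES15b}. Second, rather than attacking $\frac1N\sum_i G_{ii}$ directly — which is not compatible with this decomposition — I would introduce the auxiliary quantities $Z_i$ of~\eqref{def of Z}, a specific linear combination of $G_{ii}$ and $(UBU^*G)_{ii}$, designed so that the leading $N^{-1/2}$ fluctuations cancel for an algebraic reason coming from the subordination structure, and prove fluctuation averaging $|\frac1N\sum_i d_i Z_i|\prec\Psi^2$ for these. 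Third, to pass back from the $Z_i$'s to the original $G_{ii}$'s I would introduce the correction quantity $\Upsilon$ of~\eqref{definition of Upsilon}, which a priori looks to be of size $\Psi$, and then show \emph{a posteriori}, using the already-established bound on $\frac1N\sum d_i Z_i$, that $\Upsilon$ is in fact of order $\Psi^2$; closing this self-improving loop is the conceptual heart of the argument. Taking $d_i\equiv 1$ and combining with the stability analysis of the subordination equations (the perturbed system~\eqref{060101}) then upgrades the averaged bound into $|m_H(z)-m_{\mu_A\boxplus\mu_B}(z)|\prec\Psi^2$, which is~\eqref{032301}.

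For the fluctuation-averaging step itself I would follow the recursive-moment strategy of~\cite{LS16} rather than the traditional high-moment expansion: instead of bounding $\E|\frac1N\sum_i d_i Z_i|^{2p}$ by a direct combinatorial expansion, I would bound it recursively in terms of lower moments by repeatedly applying Gaussian integration by parts (the cumulant/Stein identity) in the Gaussian variable produced by the partial randomness decomposition, as in the scheme behind Lemma~\ref{lem. 022301}. Each integration by parts either produces a derivative of $G$ hitting an off-diagonal entry — which carries an extra smallness — or reproduces a copy of $\frac1N\sum_i d_i Z_i$, yielding an estimate of the form $\E|X|^{2p}\lesssim \sum_{k<2p}(\text{small})\,\E|X|^{k}\cdot(\cdots)$ that can be iterated to give $X\prec\Psi^2$. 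Because the partial randomness decomposition is only approximately Gaussian, there will be numerous error terms from the Gaussian approximation, and the recursive formulation is what keeps their bookkeeping manageable.

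The step I expect to be the main obstacle is the identification and control of $\Upsilon$: one has to guess the right combination that isolates the cancellation of the order-$N^{-1/2}$ fluctuations when translating between $Z_i$ and $G_{ii}$, verify the algebraic identity that makes this cancellation exact at leading order, and then carry out the a posteriori bootstrap showing $\Upsilon\prec\Psi^2$ — all while propagating the error terms coming from replacing the Haar-induced randomness by genuine Gaussians through the recursive moment estimates. Controlling the stability of the subordination system on the whole domain $\mathcal{S}_{\mathcal{I}}(0,1)$ down to $\eta$ of order $N^{-1}$ in the regular bulk, uniformly in $E\in\mathcal{I}$, is a second delicate point, but this can be imported from the stability analysis already developed in~\cite{BES15b}; the genuinely new difficulty is the $Z_i$–$\Upsilon$ mechanism for extracting the extra factor $\Psi$ in the averaged quantity.
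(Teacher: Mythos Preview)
Your proposal is correct and follows essentially the same approach as the paper: introduce $Z_i$ and $\Upsilon$, prove the recursive moment estimate for $\frac1N\sum_i d_iZ_i$ via Gaussian integration by parts in the partial randomness decomposition (Lemma~\ref{pro. estimate of high order moment} and Proposition~\ref{lem.021902}), deduce Proposition~\ref{pro.fluctuation averaging with Upsilon}, then close the loop by showing the algebraic cancellation $C_1=C_2=0$ that forces $|\Upsilon|\prec\Psi^2$, and finally feed this into the stability of $\Phi_{\mu_A,\mu_B}$ to upgrade $\omega_B^c\to\omega_B$. The only point you leave implicit is the extension from $\mathcal{S}_{\mathcal{I}}(\eta_{\mathrm m},1)$ to $\mathcal{S}_{\mathcal{I}}(0,1)$, which the paper handles by the standard monotonicity argument $s\mapsto s\,\im G_{ii}(E+\ii s)$ rather than by stability.
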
 
\begin{rem}
 The constant $b>0$ in Theorem~\ref{thm.convergence rate} and Theorem~\ref{thm.fluctuation averaging} is the same.
\end{rem}

\begin{rem}
In~(2.21) of~\cite{BES15b} we obtained the bound $|m_H(z)-m_{\mu_A\boxplus \mu_B}(z)|\prec \Psi$, uniformly on $\mathcal{S}_{\mathcal{I}}(0,1)$, under the same assumptions as above. The improvement to $\Psi^2$ in~\eqref{032301} is essentially due to the averaging of the fluctuation of $G_{ii}$'s in the normalized trace of the Green function.
\end{rem}
\begin{rem}
Note that the control parameter $\Psi(z)$ is small when the spectral parameter $z$ satisfies $\eta\gg N^{-1}$. Thus~\eqref{fluctuation averaging} and~\eqref{032301} are effective when $\eta$ is slightly above $N^{-1}$, while for even smaller $\eta$ the terms are simply estimated using monotonicity of the Green function. We further remark that the $N^\epsilon$ corrections in probability estimates $\prec$ can be improved to logarithmic corrections by pushing our estimates, yet we do not pursue this direction here.
\end{rem}
\begin{rem}
In Theorem~\ref{thm. two point masses} of Appendix~\ref{the Appendix C}, we collect the counterparts of the results in Theorem~\ref{thm.fluctuation averaging} and Theorem~\ref{thm.convergence rate} for the case that both $\mu_\alpha$ and $\mu_\beta$ are convex combinations of two points masses.  In fact, the result is exactly the same as in the
general case, unless
$\mu_\alpha=\mu_\beta$ when a possible singularity at  one particular energy $E$ needs to be incorporated
in the estimates.
\end{rem}

Theorem~\ref{thm.convergence rate} follows from Theorem~\ref{thm.fluctuation averaging}. It relies on the formula~\eqref{link g s} and a standard application of the Helffer-Sj\"{o}strand functional calculus. We omit the proof here and refer to,~\eg, Section~7.1 of~\cite{EKYY13} for a very similar argument.

\section{Preliminaries} \label{s. Preliminaries}
In this section, we collect some basic tools and necessary results from~\cite{BES15} and~\cite{BES15b}.

\subsection{Local stability of the system~(\ref{060101})} 
We first consider~\eqref{le definiting equations} in a general setting: For generic probability measures $\mu_1,\mu_2$, let $\PP_{\mu_1,\mu_2}\,:\, (\C^+)^{3}\rightarrow \C^2$ be given by
\begin{align}\label{le H system defs}
\PP_{\mu_1,\mu_2}(\omega_1,\omega_2,z)\deq\left(\begin{array}{cc} F_{\mu_1}(\omega_2)-\omega_1-\omega_2+z \\ F_{\mu_2}(\omega_1)-\omega_1-\omega_2+z \end{array}\right),
\end{align}
where $F_{\mu_1}$, $F_{\mu_2}$ are the negative reciprocal Stieltjes transforms of $\mu_1$, $\mu_2$; see~\eqref{le F definition}. 
Considering $\mu_1,\mu_2$ as fixed, the equation
\begin{align}\label{le H system}
\PP_{\mu_1,\mu_2}(\omega_1,\omega_2,z)=0,
\end{align}
is equivalent to~\eqref{le definiting equations} and, by Proposition~\ref{le prop 1}, there are unique analytic functions $\omega_1,\omega_2\,:\, \C^+\rightarrow \C^+$, $z\mapsto \omega_1(z),\omega_2(z)$ satisfying~\eqref{le limit of omega} that solve~\eqref{le H system} in terms of $z$. Choosing $\mu_1=\mu_\alpha$, $\mu_2=\mu_\beta$ Equation~\eqref{le H system} is equivalent to~\eqref{060102}; choosing $\mu_1=\mu_A$, $\mu_2=\mu_B$ it is equivalent to~\eqref{060101}.

We call the system~\eqref{le H system} {\it linearly $S$-stable at} $(\omega_1,\omega_2)$ if
\begin{align}\label{le what stable means}
 \left\|\left(\begin{array}{cc}
-1& F_{\mu_1}'(\omega_2)-1  \\
F_{\mu_2}'(\omega_1)-1& -1\\
  \end{array}\right)^{-1} \right\|\le S,
\end{align}
 for some positive constant $S$.

We recall a result from~\cite{BES15} showing that the system  $\Phi_{\mu_A,\mu_B}(\omega_A,\omega_B,z)=0$ is $S$-stable for all $z\in \mathcal{S}_{\mathcal{I}}(0,1)$. In Section~\ref{s. proof of main theorem} we will
use  Proposition~4.1 of~\cite{BES15}, where we showed that $S$-stability implies linear stability of the system in the sense that if
$$
 \PP_{\mu_A,\mu_B}(\omega_1(z),\omega_2(z),z)=\widetilde r(z)
$$
holds and $\omega_1, \omega_2$ are sufficiently close to $\omega_A$, $\omega_B$ at some
 $z_0\in \mathcal{S}_{\mathcal{I}}(0,1) $, then 
$$
 |\omega_1(z_0)-\omega_A(z_0)|\le  2S \|\widetilde{r}(z_0)\|_2,\qquad|\omega_2(z_0)-\omega_B(z_0)|\le  2S \|\widetilde{r}(z_0)\|_2.
$$

\begin{lem}[Lemma~5.1 and Corollary~5.2 of~\cite{BES15}] \label{cor.080601}
 Let $\mu_A$, $\mu_B$ be the probability measures from~\eqref{le empirical measures of A and B} satisfying the assumptions of Theorem~\ref{thm.convergence rate}. Let $\omega_A,\omega_B$ denote the associated subordination functions of~\eqref{060101}. Let $\mathcal{I}$ be the interval in~Theorem~\ref{thm.convergence rate} and assume that~\eqref{le b equation} holds. Then for  $N$ sufficiently large, the system $$\PP_{\mu_A,\mu_B}(\omega_A,\omega_B,z)=0$$ is $S$-stable with some positive constant~$S$, uniformly on $ \mathcal{S}_{\mathcal{I}}(0,1)$. Further, we have 
 \begin{align}\label{le bound on the derivata}
  \max_{z\in \mathcal{S}_{\mathcal{I}}(0,1)}|\omega_A'(z)|\le 2S,\qquad  \max_{z\in \mathcal{S}_{\mathcal{I}}(0,1)}|\omega'_B(z)|\le 2S,
 \end{align}
for $N$ sufficiently large. Moreover, there exist two strictly positive constants $K$ and $k$ such that, for $N$ sufficiently large, 
\begin{align}\label{le upper bound on omega AB}
 \max_{z\in \mathcal{S}_{\mathcal{I}}(0,1)}|\omega_A(z)|\le K,\qquad  \max_{z\in \mathcal{S}_{\mathcal{I}}(0,1)}|\omega_B(z)|\le K,
\end{align}
\begin{align}\label{le lower bound on omega AB}
\min_{z\in \mathcal{S}_{\mathcal{I}}(0,1)}\im\omega_A(z)\ge k,\qquad  \min_{z\in \mathcal{S}_{\mathcal{I}}(0,1)}\im\omega_B(z)\ge k.
\end{align}
\end{lem}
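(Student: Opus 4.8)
The plan is to establish the three conclusions of Lemma~\ref{cor.080601} by importing the corresponding statements from~\cite{BES15} and showing that the hypotheses there are met under the present assumptions. The starting point is the observation that when $\dL(\mu_A,\mu_\alpha)+\dL(\mu_B,\mu_\beta)$ is small, the system $\PP_{\mu_A,\mu_B}=0$ is a small perturbation of the limiting system $\PP_{\mu_\alpha,\mu_\beta}=0$. For the latter, the qualitative input is that $\mathcal{I}\subset\mathcal{B}_{\mu_\alpha\boxplus\mu_\beta}$, i.e.\ $\mathcal{I}$ lies in the regular bulk, which by the results of~\cite{Bel1,Bel,Bel2} guarantees that $\omega_\alpha,\omega_\beta$ extend continuously to $\mathcal{I}$ with strictly positive imaginary parts there, and that $F_{\mu_\alpha}'(\omega_\beta)$, $F_{\mu_\beta}'(\omega_\alpha)$ are finite with the stability matrix in~\eqref{le what stable means} invertible. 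Thus for the limiting measures, $S$-stability, the derivative bounds~\eqref{le bound on the derivata}, and the bounds~\eqref{le upper bound on omega AB}--\eqref{le lower bound on omega AB} all hold on $\mathcal{S}_{\mathcal{I}}(0,1)$ with constants depending only on $\mu_\alpha,\mu_\beta,\mathcal{I}$; this is precisely the content of Lemma~5.1 and Corollary~5.2 of~\cite{BES15} applied to the pair $(\mu_\alpha,\mu_\beta)$.

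Next I would transfer these bounds from $(\mu_\alpha,\mu_\beta)$ to $(\mu_A,\mu_B)$. The key quantitative fact is that the Stieltjes transform, and hence $F_{\mu}$ and its derivative, depend continuously (in fact Lipschitz, once one is a fixed positive distance from the spectrum in the relevant region) on $\mu$ with respect to the L\'evy distance; since $\|A\|,\|B\|\le C$ keeps the supports of $\mu_A,\mu_B$ in a fixed compact set, one gets uniform control. Feeding a sufficiently small $b$ into~\eqref{le b equation} makes $F_{\mu_A},F_{\mu_B}$ and their derivatives uniformly close to $F_{\mu_\alpha},F_{\mu_\beta}$ on a neighborhood of the relevant contour. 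A standard implicit-function / fixed-point argument (carried out in~\cite{BES15}) then shows that the solution $(\omega_A,\omega_B)$ of $\PP_{\mu_A,\mu_B}=0$ stays in a small neighborhood of $(\omega_\alpha,\omega_\beta)$ uniformly on $\mathcal{S}_{\mathcal{I}}(0,1)$, so in particular~\eqref{le upper bound on omega AB} and~\eqref{le lower bound on omega AB} follow (possibly with slightly adjusted constants $K$, $k$), and the stability matrix for $(\mu_A,\mu_B)$ is a small perturbation of an invertible matrix, hence itself invertible with a uniform bound $S$, giving $S$-stability. Finally, differentiating the defining relations~\eqref{060101} in $z$ and solving the resulting linear system using the inverse stability matrix yields $\omega_A'(z),\omega_B'(z)$ expressed through that inverse, so the bound $\|(\cdots)^{-1}\|\le S$ immediately gives~\eqref{le bound on the derivata}.

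The main obstacle I expect is not any single estimate but the uniformity in $z$ down to $\eta=0$: the stability matrix in~\eqref{le what stable means} could in principle degenerate as $\eta\searrow 0$ near the edges of $\mathcal{I}$, and $\im\omega_A,\im\omega_B$ could vanish. Both are ruled out precisely because $\mathcal{I}$ is a compact subset of the \emph{regular} bulk $\mathcal{B}_{\mu_\alpha\boxplus\mu_\beta}$, where by definition the density is bounded away from $0$ and $\infty$; this forces $\im\omega_\beta$ (and by symmetry $\im\omega_\alpha$) to be bounded below and $F_{\mu_\alpha}'(\omega_\beta)$ to be bounded above on all of $\mathcal{S}_{\mathcal{I}}(0,1)$, which is what makes the constants $S$, $K$, $k$ genuinely $z$-independent. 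Since the entire argument is already carried out in Lemma~5.1 and Corollary~5.2 of~\cite{BES15} under hypotheses identical to those of Theorem~\ref{thm.convergence rate}, the proof here reduces to citing those results; I would simply note that the hypotheses match and that the constants depend only on $\mu_\alpha$, $\mu_\beta$, $\mathcal{I}$ and the bound $C$ in~\eqref{le bounded A and B}.
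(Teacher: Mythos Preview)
Your proposal is correct and matches the paper's treatment: the paper does not prove this lemma at all but simply states it as a direct import of Lemma~5.1 and Corollary~5.2 of~\cite{BES15}, so your recognition that the proof reduces to citing those results with matching hypotheses is exactly right. Your additional sketch of the perturbation argument (transferring stability and bounds from $(\mu_\alpha,\mu_\beta)$ to $(\mu_A,\mu_B)$ via continuity in the L\'evy distance, and obtaining~\eqref{le bound on the derivata} by differentiating~\eqref{060101} and inverting the stability matrix) goes beyond what the paper does here but accurately reflects the content of the cited results.
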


\subsection{Partial randomness decomposition} \label{s.PRD}
In the sequel, we recall some notations on the partial randomness decomposition and some related results from~\cite{BES15b}. We use a decomposition of Haar measure on the unitary groups obtained in~\cite{DS87} (see also~\cite{Mezzadri}):  For any $i\in \llbracket 1,N \rrbracket$, there exists an independent pair $(\mathbf{v}_i, U^{i})$, with $\mathbf{v}_i\in \mathcal{S}_{\mathbb{C}}^{N-1}\deq \{\mathbf{x}\in\mathbb{C}: \mathbf{x}^*\mathbf{x}=1\}$ a uniformly distributed complex unit vector and with  $U^{i}\in U(N-1)$ a Haar unitary matrix, such that
\begin{align}
 U=-\e{\mathrm{i}\theta_i}R_iU^{\langle i\rangle},\qquad\mathbf{r}_i\deq\sqrt{2}\frac{\mathbf{e}_i+\e{-\mathrm{i}\theta_i}\mathbf{v}_i}{\|\mathbf{e}_i+\e{-\mathrm{i}\theta_i}\mathbf{v}_i\|_2}, \qquad R_i\deq I-\mathbf{r}_i\mathbf{r}_i^*, \label{decomposition}
\end{align}
where $U^{\la i\ra}$ is a unitary matrix with $\mathbf{e}_i$ as its $i$th column and $U^i$ as its $(i,i)$-matrix minor, and where $\theta_i$ is the argument of the $i$-th component of $\mathbf{v}_i$. Since $U^{\la i\ra}\mathbf{e}_i=\mathbf{e}_i$, one can easily check 
\begin{align}
U\mathbf{e}_i=-\e{\mathrm{i}\theta_i}R_i\mathbf{e}_i=\mathbf{v}_i \label{0224100}
\end{align} 
using the definition of $R_i$ in~(\ref{decomposition}). Hence, $\mathbf{v}_i$ is actually the $i$-th column of $U$, and $R_i=R_i^*$ is the Householder reflection sending $\mathbf{e}_i$ to $-\e{-\mathrm{i}\theta_i}\mathbf{v}_i$. 

With the decomposition of $U$ in~(\ref{decomposition}), we can write
\begin{align*}
H=A+\wt{B}=A+R_i \widetilde{B}^{\la i\ra} R_i,
\end{align*}
for any $i\in \llbracket 1,N \rrbracket$, where we introduced the shorthand notations
\begin{align}
\widetilde B\deq UBU^*,\qquad\quad\widetilde{B}^{\la i\ra}\deq U^{\la i\ra}B \big(U^{\la i\ra}\big)^*. \label{0911401}
\end{align}
Clearly, we have $\wt{B}^{\la i\ra}\mathbf{e}_i=b_i\mathbf{e}_i$ and $\mathbf{e}_i^*\wt{B}^{\la i\ra}=b_i\mathbf{e}_i^*$.
 We further define
\begin{align}
H^{\la i\ra} \deq A+\widetilde{B}^{\la i \ra},\quad\qquad G^{\la i \ra}(z)\deq (H^{\la i\ra}-z)^{-1},\qquad\qquad z\in\C^+. \label{090820}
\end{align}
Note that $B^{\la i\ra}$, $H^{\la i\ra}$ and $G^{\la i\ra}$ are independent of $\mathbf{v}_i$. 

It is known that for the uniformly distributed complex unit vector $\mathbf{v}_i\in \mathcal{S}_{\mathbb{C}}^{N-1}$,  there exists a Gaussian vector $\wt{\mathbf{g}}_i\sim \mathcal{N}_\mathbb{C}(0,N^{-1}I_N)$ such that
\begin{align*}
\mathbf{v}_i=\frac{\widetilde{\mathbf{g}}_i}{\|\widetilde{\mathbf{g}}_i\|_2}.
\end{align*}
We further define
\begin{align}
\mathbf{g}_i\deq \e{-\mathrm{i}\theta_i}\widetilde{\mathbf{g}}_i,\qquad 
\mathbf{h}_i\deq \frac{\mathbf{g}_i}{\|\mathbf{g}_i\|_2}=\e{-\mathrm{i}\theta_i} \mathbf{v}_i, \qquad \ell_i\deq \frac{\sqrt{2}}{\|\mathbf{e}_i+\mathbf{h}_i\|_2}. \label{some notation for r}
\end{align}
Note that the components of $\mathbf{g}_i$ are independent. In addition, for $k\neq i$, $g_{ik}$ is a $N_{\mathbb{C}}(0,\frac{1}{N})$ random variables while $g_{ii}$ is a $\chi$-distributed random variable with $\mathbb{E}[g_{ii}^2]=\frac{1}{N}$. 
With the above notations, we can write the vector $\mathbf{r}_i$ defined in~(\ref{decomposition}) as
\begin{align}
\mathbf{r}_i=\ell_i(\mathbf{e}_i+\mathbf{h}_i). \label{022930}
\end{align}
Two simple estimates are
\begin{align}
\Big|\|\mathbf{g}_i\|_2-1-\frac{1}{2}\big(\|\mathbf{g}_i\|_2^2-1\big)\Big|\prec\frac{1}{N},\qquad\quad \Big|\ell_i^2-(1-g_{ii})\Big|\prec\frac{1}{N}, \label{012001}
\end{align}
where in the first estimate we used $\big|\|\mathbf{g}_i\|_2^2-1\big|\prec N^{-1/2}$ and in the second we used $\ell_i^2=({1+\mathbf{e}_i^*\mathbf{h}_i})^{-1}$; \cf~\eqref{some notation for r}. Moreover, according to~(\ref{0224100}), the fact  $R_i^2=I$, and the definition of $\mathbf{h}_i$ in~(\ref{some notation for r}),  we also have
\begin{align}
R_i\mathbf{e}_i=-\mathbf{h}_i,\qquad\quad R_i\mathbf{h}_i=-\mathbf{e}_i, \label{0224101}
\end{align}
which further imply the identities
\begin{align}\mathbf{h}_i^*\wt{B}^{\la i\ra}R_i=-\mathbf{e}_i^* \wt{B}, \qquad\quad\mathbf{e}_i^*\wt{B}^{\la i\ra} R_i=-b_i\mathbf{h}_i^*=-\mathbf{h}_i^*\wt{B}, \label{030430}
\end{align}
where in the first step of the second equation above we used the fact $\mathbf{e}_i^*\wt{B}^{\la i\ra}=b_i\mathbf{e}_i^*$.

Since $g_{ii}$ is $\chi$-distributed, rather than Gaussian as the $g_{ik}$'s, it is convenient to kick it out of many arguments in the sequel where Gaussian integration by parts is repeatedly used. To this end, we denote by $\mathring{\mathbf{g}}_i$ the vector obtained from $\mathbf{g}_i$ via replacing $g_{ii}$ by zero, \ie
\begin{align*}
\mathring{\mathbf{g}}_i\deq \mathbf{g}_i-g_{ii}\mathbf{e}_i.
\end{align*} 
Correspondingly, we set
\begin{align}
\mathring{\mathbf{h}}_i\deq \frac{\mathring{\mathbf{g}}_i}{\|\mathbf{g}_i\|_2}. \label{021911}
\end{align}

Throughout the paper, without loss of generality, we assume that
\begin{align}
\ntr A=\ntr B=0. \label{022902}
\end{align}

\subsection{Approximate subordination and weak local law} 

We next briefly discuss the approximate subordination property of the Green function. In addition to $H=A+UBU^*$, we also use 
\begin{align*}
\mathcal{H}\equiv \mathcal{H}^{(N)}\deq U^*AU+B
\end{align*}
and denote the Green function of $\mathcal{H}$ by
\begin{align}
 \mathcal{G}(z)\equiv \mathcal{G}_{\mathcal{H}}(z)\deq (\mathcal{H}-z)^{-1}, \qquad\qquad z\in \mathbb{C}^+. \label{Green functions}
\end{align}
Note that the normalized traces of the Green functions $G$ and $\mathcal{G}$ are equal,
\begin{align}
m_H(z)\deq \ntr G(z)=\ntr \mathcal{G}(z), \label{022805}
\end{align}
and agree with the Stieltjes transform of the empirical spectral measure $\mu_H$.
Recall $\widetilde B$ introduced in~\eqref{0911401}. For brevity, we set
\begin{align}
\wt{A}\deq U^*AU.\label{030540}
\end{align}

Following~\cite{BES15b}, we define the  {\it approximate subordination functions} by
\begin{align}
\omega_A^c(z)\deq z-\frac{\ntr \wt{A}\mathcal{G}(z)}{m_H(z)},\qquad \omega_B^c(z)\deq z-\frac{\ntr \wt{B}G(z)}{m_H(z)},\qquad\qquad z\in\mathbb{C}^+. \label{def of approximate subordination functions}
\end{align}
These are slight modifications of the approximate subordination functions used by Pastur and Vasilchuck in~\cite{VP} and by Kargin in~\cite{Kargin}. By cyclicity of the trace, we also have
\begin{align}
\omega_A^c(z)=z-\frac{\ntr A G(z)}{m_H(z)},\qquad\qquad z\in \mathbb{C}^+. \label{030520}
\end{align}
A simple observation from~(\ref{def of approximate subordination functions}),~(\ref{030520}) and the definition of the Green function is that
\begin{align}
-\frac{1}{m_H(z)}=z-\omega_A^c(z)-\omega_B^c(z). \label{022806}
\end{align}
This suggests that $\omega_A^c$ and $\omega_B^c$ approximately solve~\eqref{060101}. This is indeed the case as is confirmed by the next result obtained in~\cite{BES15b}. We need some more notation. For any (small) $\gamma>0$, set
 \begin{align}	
 \eta_{\mathrm{m}}\equiv \eta_{\mathrm{m}}(\gamma)\deq N^{-1+\gamma}. \label{030528}
 \end{align}

\begin{pro} \label{thm. entrywise estimate}(Theorem~2.6 and~(7.12) in~\cite{BES15b}) Suppose that the assumptions in Theorem~\ref{thm.convergence rate} and~\eqref{le b equation} hold. Fix any (small) $\gamma>0$ and recall $\eta_{\mathrm{m}}\equiv \eta_{\mathrm{m}}(\gamma)$ from~\eqref{030528}. Then we have
\begin{align}
  \big| \omega_A^c(z)-\omega_A(z)\big|\prec \Psi,\qquad\quad \big|\omega_B^c(z)-\omega_B(z)\big|\prec \Psi \label{022910}
\end{align}
and
\begin{align}
\max_{i,j\in\llbracket 1,N\rrbracket}\Big|G_{ij}(z)-\delta_{ij}\frac{1}{a_i-\omega_B(z)}\Big|\prec\Psi, \label{entrywise estimate}
\end{align}
uniformly on $\mathcal{S}_{\mathcal{I}}(\eta_{\mathrm{m}},1)$.
\end{pro}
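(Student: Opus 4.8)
The plan is to establish~\eqref{022910} and~\eqref{entrywise estimate} simultaneously by a bootstrap in the imaginary part $\eta$, combining the partial randomness decomposition of Section~\ref{s.PRD} with an approximate self-consistent equation for $\omega_A^c,\omega_B^c$ and the linear stability of the system~\eqref{le H system} recorded in Lemma~\ref{cor.080601}. I would introduce the a priori control quantity
$$
\Lambda(z)\deq\max_{i,j\in\llbracket 1,N\rrbracket}\Big|G_{ij}(z)-\delta_{ij}\frac{1}{a_i-\omega_B(z)}\Big|+\big|\omega_A^c(z)-\omega_A(z)\big|+\big|\omega_B^c(z)-\omega_B(z)\big|
$$
and prove the self-improving implication: on $\mathcal{S}_{\mathcal{I}}(\eta_{\mathrm{m}},1)$, if $\Lambda\prec\Psi^{1/2}$ then in fact $\Lambda\prec\Psi$. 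Since at $\eta=1$ one has $\Lambda=O(1)$ and $\im\omega_A,\im\omega_B\gtrsim 1$ by Lemma~\ref{cor.080601}, a continuity argument that lowers $\eta$ in small steps, together with a union bound over a polynomial grid in $z$ and the deterministic bound $\|\partial_zG(z)\|\le\eta^{-2}$, then upgrades this to $\Lambda\prec\Psi$ on all of $\mathcal{S}_{\mathcal{I}}(\eta_{\mathrm{m}},1)$, which is exactly the assertion.

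The heart of the matter is an approximate subordination identity for the individual entries of $G$, namely
$$
(\widetilde B G)_{ii}=\frac{\ntr(\widetilde B G)}{m_H}\,G_{ii}+O_\prec(\Psi)=\big(z-\omega_B^c(z)\big)G_{ii}+O_\prec(\Psi)
$$
uniformly in $i$, together with $(\widetilde B G)_{ij}=(z-\omega_B^c)G_{ij}+O_\prec(\Psi)$ for $i\ne j$ and the analogues for $\mathcal{G}$ with $\omega_A^c$. To prove it I would fix $i$ and conjugate by the Householder reflection $R_i$: using $R_i^2=I$, $R_i\mathbf{e}_i=-\mathbf{h}_i$ (see~\eqref{0224101}) and~\eqref{030430}, the three quantities $(\widetilde B G)_{ii}$, $G_{ii}$, $\ntr(\widetilde B G)$ become respectively $\mathbf{h}_i^*\widetilde B^{\la i\ra}G'\mathbf{h}_i$, $\mathbf{h}_i^*G'\mathbf{h}_i$, $\ntr(\widetilde B^{\la i\ra}G')$, where $G'\deq R_iGR_i=(H^{\la i\ra}+(R_iAR_i-A)-z)^{-1}$ and the perturbation $R_iAR_i-A$ has rank at most two (range in $\mathrm{span}(\mathbf{r}_i,A\mathbf{r}_i)$) and operator norm $\lesssim1$. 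Expanding $G'=G^{\la i\ra}-G^{\la i\ra}(R_iAR_i-A)G'$ to finite order reduces everything to quadratic forms in $\mathbf{h}_i=\mathbf{g}_i/\|\mathbf{g}_i\|_2$ against matrices built from $G^{\la i\ra}$, $\widetilde B^{\la i\ra}$ and $A$, all independent of $\mathbf{v}_i$; the standard large-deviation estimates for quadratic forms in Gaussian vectors apply once the $\chi$-distributed diagonal component $g_{ii}$ is extracted by passing to $\mathring{\mathbf{g}}_i$ (see~\eqref{021911},~\eqref{012001}), and thanks to the Ward identity $\|G^{\la i\ra}\|_2^2=\eta^{-1}\im\mathrm{Tr}\,G^{\la i\ra}$ and the a priori bound $\im m_H\lesssim1$ (which follows from $\Lambda\prec\Psi^{1/2}$ and Lemma~\ref{cor.080601}) the fluctuations are $\prec\Psi$, and in particular $\mathbf{h}_i^*G^{\la i\ra}\mathbf{h}_i=m_H+O_\prec(\Psi)$. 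The crux is that in the combination $(\widetilde B G)_{ii}-\frac{\ntr(\widetilde B G)}{m_H}G_{ii}$ the leading deterministic traces $\ntr(\widetilde B^{\la i\ra}G^{\la i\ra})$ cancel identically, and — after passing to $\mathring{\mathbf{g}}_i$ and rearranging — so do the a priori large contributions of size $\sim N^{-1/2}/\eta$ coming from the anomalous entry $(G^{\la i\ra})_{ii}=(a_i+b_i-z)^{-1}$ and from the $\chi$-distributed component $g_{ii}$; what remains is $O_\prec(\Psi)$.

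Granting the identity, the exact relation $(a_i-z)G_{ii}+(\widetilde B G)_{ii}=1$ gives $(a_i-\omega_B^c)G_{ii}=1+O_\prec(\Psi)$, while $(a_i-z)G_{ij}+(\widetilde B G)_{ij}=0$ gives $(a_i-\omega_B^c)G_{ij}=O_\prec(\Psi)$ for $i\ne j$; since $|a_i-\omega_B^c|\gtrsim\im\omega_B^c\gtrsim k$ by Lemma~\ref{cor.080601} and the smallness of $\Lambda$, this yields $\max_{i,j}|G_{ij}-\delta_{ij}(a_i-\omega_B^c)^{-1}|\prec\Psi$. Averaging over $i$ gives $m_H=m_{\mu_A}(\omega_B^c)+O_\prec(\Psi)$, and the same argument applied to $\mathcal{G}$ (recall $\ntr\mathcal{G}=m_H$) gives $m_H=m_{\mu_B}(\omega_A^c)+O_\prec(\Psi)$. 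Feeding these two relations, together with the exact identity $-1/m_H=z-\omega_A^c-\omega_B^c$ of~\eqref{022806}, into the definition~\eqref{le H system defs}, one obtains $\Phi_{\mu_A,\mu_B}(\omega_A^c,\omega_B^c,z)=\widetilde r(z)$ with $\|\widetilde r(z)\|_2\prec\Psi$. The linear stability statement recalled just before Lemma~\ref{cor.080601} (Proposition~4.1 of~\cite{BES15}) then yields $|\omega_A^c-\omega_A|,|\omega_B^c-\omega_B|\prec\Psi$, which also converts the denominators $a_i-\omega_B^c$ back into $a_i-\omega_B$ in the entrywise bound. This closes $\Lambda\prec\Psi$, and the bootstrap in $\eta$ finishes the proof.

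The step I expect to be the main obstacle is precisely the one flagged in the introduction: because the columns of a Haar unitary are dependent there is no Schur-complement identity directly isolating a row and column of $G$, so the whole argument must be routed through the conjugation by $R_i$ and the Gaussian substitution $\mathbf{v}_i\mapsto\widetilde{\mathbf{g}}_i/\|\widetilde{\mathbf{g}}_i\|_2$. Bookkeeping of the numerous resulting error terms, verifying that the contributions of size $\sim N^{-1/2}/\eta$ — which are much larger than the target $\Psi$ — cancel after passing to $\mathring{\mathbf{g}}_i$, and checking that the surviving fluctuations are $\prec\Psi$ rather than $\prec\Psi/\sqrt{\eta}$ (this is where the Ward identity and the a priori bound $\im m_H\lesssim1$ are indispensable), is where the real work lies. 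A secondary subtlety is making the $\eta$-continuity argument rigorous down to $\eta_{\mathrm{m}}=N^{-1+\gamma}$, i.e.\ confirming that the ``forbidden'' region $\Psi^{1/2}\lesssim\Lambda\lesssim1$ is never entered.
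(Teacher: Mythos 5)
This proposition is not proved in the present paper at all: it is imported verbatim from~\cite{BES15b} (Theorem~2.6 and~(7.12) there), so there is no in-paper argument to compare against. Measured against the actual proof in~\cite{BES15b}, your sketch is a faithful reconstruction of the overall strategy: a self-improving estimate for a combined control quantity, propagated from $\eta=1$ down to $\eta_{\mathrm{m}}$ by a grid-plus-Lipschitz continuity argument; the partial randomness decomposition with the Gaussian representation of $\mathbf{v}_i$ and the extraction of the $\chi$-distributed component; an approximate entrywise subordination relation $(\wt{B}G)_{ii}=(z-\omega_B^c)G_{ii}+O_\prec(\Psi)$ combined with the exact identity $(a_i-z)G_{ii}+(\wt{B}G)_{ii}=1$; and finally the conversion of $m_H=m_{\mu_A}(\omega_B^c)+O_\prec(\Psi)$, $m_H=m_{\mu_B}(\omega_A^c)+O_\prec(\Psi)$ and~\eqref{022806} into $\|\PP_{\mu_A,\mu_B}(\omega_A^c,\omega_B^c,z)\|_2\prec\Psi$, which the linear $S$-stability of Lemma~\ref{cor.080601} upgrades to~\eqref{022910}. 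The one organizational difference is that you route the entrywise computation through the conjugated resolvent $R_iGR_i$ viewed as a rank-two perturbation of $G^{\la i\ra}$, whereas~\cite{BES15b} works directly with the quantities $S_i=\mathbf{h}_i^*\wt{B}^{\la i\ra}G\mathbf{e}_i$ and $T_i=\mathbf{h}_i^*G\mathbf{e}_i$ (the same ones recalled in~\eqref{022920}) and derives a closed, solvable system for $(G_{ii},S_i,T_i)$; both routes reduce to the same Gaussian quadratic-form estimates via the Ward identity, and your variant is legitimate. The genuinely hard steps --- solving the self-consistent system, verifying that the anomalous contributions from $(G^{\la i\ra})_{ii}=(a_i+b_i-z)^{-1}$ and from $g_{ii}$ do not spoil the $O_\prec(\Psi)$ bound, and the bookkeeping of the error terms in the Gaussian approximation --- are asserted rather than executed, but you identify them correctly, and this is exactly the content of Sections~5--7 of~\cite{BES15b}. (As a side remark, the sign in~\eqref{022806} as printed is inconsistent with $\omega_1+\omega_2-z=F=-1/m$ from~\eqref{le definiting equations}; a direct computation gives $z-\omega_A^c-\omega_B^c=1/m_H$. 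You quote the paper's version, and the stability argument is unaffected either way.)
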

From~\eqref{entrywise estimate}, we directly get the following non-optimal estimate by taking the normalized trace,
\begin{align}
\big|\ntr G(z)- m_{\mu_A\boxplus \mu_B}(z)\big|\prec \Psi,
\end{align}
uniformly on $\mathcal{S}_{\mathcal{I}}(\eta_{\mathrm{m}},1)$. While the estimate in~\eqref{entrywise estimate} is essentially optimal, the estimate in~\eqref{022910} is improved by the fluctuation averaging as is asserted by the next result.
\begin{thm}\label{thm: subordination functions}
  Suppose that the assumptions in Theorem~\ref{thm.convergence rate} and~\eqref{le b equation} hold. Fix (small) $\gamma>0$. Then
 \begin{align}
  |\omega_A^c(z)-\omega_A(z)|\prec \Psi^2,\qquad\quad |\omega_B^c(z)-\omega_B(z)|\prec \Psi^2
 \end{align}
hold uniformly on $\mathcal{S}_{\mathcal{I}}(\eta_{\mathrm{m}},1)$ with $\eta_{\mathrm{m}}\equiv \eta_{\mathrm{m}}(\gamma)$; see~\eqref{030528}.
\end{thm}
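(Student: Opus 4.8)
The plan is to obtain both bounds by combining the fluctuation‑averaging estimate \eqref{fluctuation averaging} — in fact only its two special cases $d_i\equiv 1$ and $d_i\equiv a_i/C$, where $C$ bounds $\|A\|$ — with the algebraic identities that define $\omega_A^c,\omega_B^c$ and the exact subordination equations \eqref{060101} for $\omega_A,\omega_B$. Throughout write $s_0\deq m_{\mu_A}(\omega_B(z))=\frac1N\sum_i(a_i-\omega_B(z))^{-1}$ and $X=O_\prec(\Psi^2)$ as shorthand for $|X|\prec\Psi^2$. By Lemma~\ref{cor.080601}, on $\mathcal{S}_{\mathcal{I}}(0,1)$ one has $|\omega_A|,|\omega_B|\le K$ and $\im\omega_A,\im\omega_B\ge k>0$, so $|s_0|$ is bounded above and below there, and $s_0=-1/F_{\mu_A}(\omega_B)$. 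Since $\Psi^2\le N^{-\gamma}$ on $\mathcal{S}_{\mathcal{I}}(\eta_{\mathrm m},1)$, the estimate \eqref{fluctuation averaging} with $d_i\equiv1$ (equivalently \eqref{032301}, using $m_{\mu_A\boxplus\mu_B}=s_0$) gives $m_H=s_0+O_\prec(\Psi^2)$, hence $|m_H|\gtrsim1$ with high probability, uniformly; and with $d_i\equiv a_i/C$ it gives, after multiplying by the $O(1)$ constant $C$ and using $\frac1N\sum_i\frac{a_i}{a_i-\omega_B}=1+\omega_B s_0$, the bound $\ntr(AG)=1+\omega_B s_0+O_\prec(\Psi^2)$.

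Next I would feed these into the defining formula $\omega_A^c=z-\ntr(AG)/m_H$ of \eqref{030520}. Since the numerator $1+\omega_B s_0$ and the denominator $s_0$ are $O(1)$ and $s_0$ is bounded away from $0$, the elementary fact that $\frac{u+O_\prec(\Psi^2)}{v+O_\prec(\Psi^2)}=\frac uv+O_\prec(\Psi^2)$ for bounded $u$ and $v$ bounded below yields $\omega_A^c=z-\frac{1+\omega_B s_0}{s_0}+O_\prec(\Psi^2)=z-\omega_B-\frac1{s_0}+O_\prec(\Psi^2)=z-\omega_B+F_{\mu_A}(\omega_B)+O_\prec(\Psi^2)$, and the first equation of \eqref{060101}, $F_{\mu_A}(\omega_B)=\omega_A+\omega_B-z$, collapses the right‑hand side to $\omega_A+O_\prec(\Psi^2)$. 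For $\omega_B^c$ I would use the resolvent identity $\ntr(AG)+\ntr(\wt BG)=\ntr(HG)=1+zm_H$, which turns $\omega_B^c=z-\ntr(\wt BG)/m_H$ into $\omega_B^c=\ntr(AG)/m_H-1/m_H=(z-\omega_A^c)-1/m_H$; inserting $\omega_A^c=\omega_A+O_\prec(\Psi^2)$ and $1/m_H=1/s_0+O_\prec(\Psi^2)=-F_{\mu_A}(\omega_B)+O_\prec(\Psi^2)=z-\omega_A-\omega_B+O_\prec(\Psi^2)$ gives $\omega_B^c=\omega_B+O_\prec(\Psi^2)$. Uniformity on $\mathcal{S}_{\mathcal{I}}(\eta_{\mathrm m},1)$ is inherited from that of \eqref{fluctuation averaging} and Lemma~\ref{cor.080601}.

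In this reduction the only delicate point is the lower bound $|m_H|\gtrsim1$ needed to divide by $m_H$ without losing a factor of $\Psi$; it is precisely here that the regular‑bulk hypothesis $\mathcal{I}\subset\mathcal{B}_{\mu_\alpha\boxplus\mu_\beta}$ is used, through $\im\omega_B\ge k$ in Lemma~\ref{cor.080601}. The substantive input is \eqref{fluctuation averaging} itself; if one prefers to prove Theorem~\ref{thm: subordination functions} before, and as a step toward, the full Theorem~\ref{thm.fluctuation averaging}, one must instead establish directly the two bounds $m_H=s_0+O_\prec(\Psi^2)$ and $\ntr(AG)=1+\omega_B s_0+O_\prec(\Psi^2)$, and the proof of these is what I expect to be the real obstacle. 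Because the columns of $U$ are dependent, a direct fluctuation‑averaging expansion of $\frac1N\sum_i G_{ii}$ is unavailable; following the strategy outlined in the introduction I would bound the high moments of these averages recursively (in the manner of Lemma~\ref{lem. 022301}, via Gaussian integration by parts applied to the partial‑randomness decomposition of Section~\ref{s.PRD}), the crucial algebraic point being that the $O(N^{-1/2})$ fluctuations of the individual $G_{ii}$ do not cancel in $\frac1N\sum_iG_{ii}$ on their own: one must first pass to the auxiliary quantity $Z_i$, a suitable linear combination of $G_{ii}$ and $(\wt BG)_{ii}$ in which they cancel, and then restore them through a correction term $\Upsilon$ shown only a posteriori to be $O_\prec(\Psi^2)$.
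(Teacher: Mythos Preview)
Your algebra is correct, but the argument as written is circular in the paper's logical structure: Theorem~\ref{thm.fluctuation averaging} (i.e.\ \eqref{fluctuation averaging}) is not an independent input here---it is deduced \emph{from} Theorem~\ref{thm: subordination functions}. Concretely, the paper first proves Proposition~\ref{pro.fluctuation averaging with Upsilon}, which compares $\frac1N\sum_i d_iG_{ii}$ to $\frac1N\sum_i d_i(a_i-\omega_B^c-\Upsilon/\ntr G)^{-1}$ (note $\omega_B^c$, not $\omega_B$); it then shows $|\Upsilon|\prec\Psi^2$ by a direct computation, obtains the $\omega_B^c$-version~\eqref{022802} and its $\mathcal{G}$-analogue~\eqref{022803}, and feeds the resulting pair~\eqref{022808} into the stability estimate for the system $\Phi_{\mu_A,\mu_B}(\omega_A^c,\omega_B^c,z)=0$ (Lemma~\ref{cor.080601} and Proposition~4.1 of~\cite{BES15}) to conclude~\eqref{022810}, which \emph{is} Theorem~\ref{thm: subordination functions}. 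Only then is~\eqref{022810} plugged back into~\eqref{022802} to upgrade $\omega_B^c$ to $\omega_B$ and obtain~\eqref{fluctuation averaging}. So invoking \eqref{fluctuation averaging} to prove Theorem~\ref{thm: subordination functions} assumes what is to be shown.

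That said, your final paragraph correctly locates the real content and anticipates the paper's strategy (the $Z_i$--$\Upsilon$ mechanism and recursive moment bounds). The only substantive methodological difference is in the final algebraic step: you use two weights $d_i=1$ and $d_i\propto a_i$ for $G$ alone and manipulate $\omega_A^c=z-\ntr(AG)/m_H$ directly, whereas the paper uses $d_i=1$ for \emph{both} $G$ and $\mathcal{G}$, obtaining $m_H\approx m_A(\omega_B^c)$ and $m_H\approx m_B(\omega_A^c)$, which together with the exact identity~\eqref{022806} give $\|\Phi_{\mu_A,\mu_B}(\omega_A^c,\omega_B^c,z)\|_2\prec\Psi^2$ and then stability does the rest. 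Either route is fine once the $\Psi^2$ averaging bound \eqref{022802} (with $\omega_B^c$) is in hand; the point is that this bound, not \eqref{fluctuation averaging}, is the legitimate input.
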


Next, recalling the notations introduced in Section~\ref{s.PRD}, we introduce the following key quantities
\begin{align}
S_i\equiv S_i(z)\deq \mathbf{h}_i^* \wt{B}^{\la i\ra} G\mathbf{e}_i,\qquad\quad T_i\equiv T_i(z)\deq \mathbf{h}_i^* G\mathbf{e}_i. \label{022920}
\end{align}
Note that here $S_i$, $T_i$ are slightly different from the counterparts in (5.1) of~\cite{BES15b}, where we used a Gaussian vector to approximate $\mathbf{h}_i$ and $1$ to approximate $\ell_i$. Such a modification of the definition does not alter the estimate on $S_i$ and $T_i$ obtained in~\cite{BES15b}; see~(\ref{022955}) below.
More specifically, we have the following lemma.
\begin{lem}\label{lem. bound for quadratic forms}  Suppose that the  assumptions in Theorem~\ref{thm.convergence rate} and~\eqref{le b equation} hold. Letting $Q_i, Q_i'$ stand for the matrix $I$ or $\wt{B}^{\la i\ra}$, and  letting $\boldsymbol{\alpha}_i,\boldsymbol{\beta}_i$ stand for $\mathbf{h}_i$ or $\mathbf{e}_i$. Fix any (small) $\gamma>0$ and recall $\eta_{\mathrm{m}}\equiv \eta_{\mathrm{m}}(\gamma)$ from~\eqref{030528}. Then, we have the bound
\begin{align}
\max_{i\in \llbracket 1, N\rrbracket} \big|\boldsymbol{\alpha}_i^* Q_i G(z) Q_i'\boldsymbol{\beta}_i\big|\prec 1
\label{030321}
\end{align} 
uniformly on $\mathcal{S}_{\mathcal{I}}(\eta_{\mathrm{m}},1)$.
For $S_i$ and $T_i$, we have the more precise estimates 
\begin{align}
\max_{i\in \llbracket 1,N \rrbracket} \Big|S_i(z)+\frac{z-\omega_B(z)}{a_i-\omega_B(z)}\Big|\prec \Psi,\qquad \quad\max_{i\in \llbracket 1, N\rrbracket}\big| T_i\big|\prec \Psi  \label{022955}
\end{align}
uniformly on $\mathcal{S}_{\mathcal{I}}(\eta_{\mathrm{m}},1)$.
\end{lem}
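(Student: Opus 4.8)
The plan is to follow the strategy of \cite{BES15b}, where (up to an innocuous change of definitions) both bounds are proved, and to explain why the present definitions of $S_i$ and $T_i$ make no difference. For the crude bound \eqref{030321}, the diagonal case $\boldsymbol{\alpha}_i=\boldsymbol{\beta}_i=\mathbf{e}_i$, $Q_i=Q_i'=I$ is immediate from the entrywise law \eqref{entrywise estimate} together with the lower bound $|a_i-\omega_B(z)|\gtrsim 1$ on $\mathcal{S}_{\mathcal{I}}(0,1)$, which follows from $\im\omega_B\ge k$ in \eqref{le lower bound on omega AB} and $a_i\in\mathbb{R}$. For the other choices of $\boldsymbol{\alpha}_i,\boldsymbol{\beta}_i,Q_i,Q_i'$ one conditions on the pair $(\mathbf{v}_i,U^{\langle i\rangle})$ of the partial randomness decomposition: from $H=A+R_i\wt B^{\langle i\rangle}R_i$ and the resolvent identity $G=G^{\langle i\rangle}-G^{\langle i\rangle}\Delta_iG$, with $\Delta_i\deq R_i\wt B^{\langle i\rangle}R_i-\wt B^{\langle i\rangle}$ of rank at most two, one expresses $G\mathbf{e}_i$ through $G^{\langle i\rangle}$ (which is independent of $\mathbf{v}_i$), the vectors $\mathbf{r}_i$ and $\wt B^{\langle i\rangle}\mathbf{r}_i$, and a few scalars such as $\mathbf{r}_i^*\wt B^{\langle i\rangle}\mathbf{r}_i$. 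Since $H^{\langle i\rangle}\mathbf{e}_i=(a_i+b_i)\mathbf{e}_i$ and $\mathbf{e}_i^*H^{\langle i\rangle}=(a_i+b_i)\mathbf{e}_i^*$, one has $G^{\langle i\rangle}\mathbf{e}_i=(a_i+b_i-z)^{-1}\mathbf{e}_i$, and the entrywise bounds on the remaining entries of $G^{\langle i\rangle}$ hold by the same reasoning that gives \eqref{entrywise estimate}, as in \cite{BES15b}. Writing $\mathbf{h}_i=\mathring{\mathbf{h}}_i+h_{ii}\mathbf{e}_i$ and applying standard large deviation estimates for linear and quadratic forms in the (nearly Gaussian) vector $\mathbf{g}_i$ then gives \eqref{030321}; the discrepancies of size $\prec N^{-1/2}$ carried by $h_{ii}$, by $\|\mathbf{g}_i\|_2-1$ and by $\ell_i-1$ (see \eqref{012001}) are subleading.

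For the sharp estimates \eqref{022955}, the same expansion is pushed to second order and the exact reflection identities $R_i\mathbf{e}_i=-\mathbf{h}_i$, $R_i\mathbf{h}_i=-\mathbf{e}_i$ from \eqref{0224101}, together with \eqref{030430} and $\mathbf{e}_i^*\wt B^{\langle i\rangle}=b_i\mathbf{e}_i^*$, are used to close a small linear system for $T_i$, $S_i$ and the auxiliary observables $\mathbf{h}_i^*G\mathbf{h}_i$, $\mathbf{h}_i^*\wt B^{\langle i\rangle}G\mathbf{h}_i$ and $\mathbf{e}_i^*G\mathbf{h}_i$. Its coefficients are deterministic up to errors $\prec\Psi$: the genuinely random inputs are quadratic forms $\mathring{\mathbf{h}}_i^*M\mathring{\mathbf{h}}_i$ with $M$ built from $G^{\langle i\rangle}$ and $\wt B^{\langle i\rangle}$, which concentrate around $\frac{1}{N}\ntr M$ with fluctuation $\prec\Psi$ (the Ward identity keeps $\|M\|_2$ of order $\sqrt{N/\eta}$), and $\frac{1}{N}\ntr M$ is in turn evaluated using \eqref{entrywise estimate}, the representations \eqref{def of approximate subordination functions} and \eqref{030520}, the identity \eqref{022806}, and the bound $|\omega_B^c-\omega_B|\prec\Psi$ of Proposition~\ref{thm. entrywise estimate}. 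Solving the system — which requires its determinant to be bounded away from $0$ on $\mathcal{S}_{\mathcal{I}}(\eta_{\mathrm{m}},1)$, a consequence of $|a_i-\omega_B|\gtrsim 1$ and of the $S$-stability of Lemma~\ref{cor.080601} — yields the leading term $-(z-\omega_B(z))/(a_i-\omega_B(z))$ for $S_i$ and $0$ for $T_i$, each with remainder $\prec\Psi$.

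The delicate point is the bookkeeping: one must check that the $\prec\Psi$ fluctuations and the various $\prec N^{-1/2}$ discrepancies propagate through the (invertible) linear system without degrading the final $\Psi$-precision, and one needs the crude bound \eqref{030321} already in hand to control the nonlinear remainder in the resolvent expansion — the customary two-step structure (a priori crude bound, then bootstrap to the sharp estimate). In the write-up I would shortcut most of this by noting that the present $S_i,T_i$ differ from those in \cite{BES15b} only in that the Gaussian vector used there is replaced by the exact unit vector $\mathbf{h}_i$ and the constant $1$ by $\ell_i$; since $\|\mathbf{g}_i-\mathbf{h}_i\|_2\prec N^{-1/2}$ and $|\ell_i-1|\prec N^{-1/2}$ uniformly in $i$, while $\|G(z)\|\le\eta^{-1}\le\eta_{\mathrm{m}}^{-1}$ on $\mathcal{S}_{\mathcal{I}}(\eta_{\mathrm{m}},1)$ and the corresponding bounds of \cite{BES15b} are available, every term changes by at most $\prec N^{-1/2}$, which is negligible against $\Psi$ throughout $\mathcal{S}_{\mathcal{I}}(\eta_{\mathrm{m}},1)$. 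Hence \eqref{030321} and \eqref{022955} follow from the corresponding statements in \cite{BES15b}.
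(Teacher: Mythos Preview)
Your proposal is correct and follows essentially the same approach as the paper: both reduce the lemma to the corresponding results in~\cite{BES15b} and argue that the change of definitions (exact unit vector $\mathbf{h}_i$ versus its Gaussian approximation, $\ell_i$ versus $1$) only produces errors of order $\Psi$. Your first two paragraphs spell out the content of the argument in~\cite{BES15b} in more detail than the paper does (the paper simply cites (5.43), (6.3) and Lemma~4.1 there), but the logical structure is the same.

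One small point on the shortcut in your third paragraph: the mention of $\|G(z)\|\le\eta^{-1}$ is misleading, since a naive Cauchy--Schwarz bound $\|\mathbf{g}_i-\mathbf{h}_i\|_2\cdot\|Q_iGQ_i'\boldsymbol{\beta}_i\|_2$ would give $\prec N^{-1/2}\eta^{-1}$, which is far too large for small $\eta$. What actually makes the replacement harmless is that $\mathbf{h}_i=\mathbf{g}_i/\|\mathbf{g}_i\|_2$ is a \emph{scalar} multiple of $\mathbf{g}_i$, so the quadratic forms differ only by the factor $\|\mathbf{g}_i\|_2^{-1}=1+O_\prec(N^{-1/2})$; combined with the $O_\prec(1)$ (respectively $O_\prec(\Psi)$) bounds from~\cite{BES15b} on the quadratic forms themselves, this gives the claimed $O_\prec(N^{-1/2})\le O_\prec(\Psi)$ error. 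This is presumably what you meant by ``the corresponding bounds of~\cite{BES15b} are available'', but the operator norm bound on $G$ plays no role here.
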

\begin{proof}
Using the last inequality in~(\ref{022910}) and the lower bound in~\eqref{le lower bound on omega AB}, we see that~(\ref{022955}) is equivalent~to 
\begin{align}
\max_{i\in \llbracket 1,N \rrbracket} \Big|S_i(z)+\frac{z-\omega_B^c(z)}{a_i-\omega_B^c(z)}\Big|\prec \Psi,\qquad \max_{i\in \llbracket 1, N\rrbracket}\big| T_i\big|\prec \Psi  \label{032401}.
\end{align}
The counterparts of~(\ref{030321}) and~(\ref{032401}) in~\cite{BES15b}, with $\mathbf{h}_i$ replaced by a Gaussian approximation and~$\ell_i$ replaced by~$1$ in the  quantity $\boldsymbol{\alpha}_i^* Q_i G(z) Q_i'\boldsymbol{\beta}_i$, are (5.43) and (6.3)  of~\cite{BES15b}, respectively. Hence, it suffices to show that the replacement of $\mathbf{h}_i$ by its Gaussian approximation in~\cite{BES15b} and $\ell_i$ by $1$ in the quantity $\boldsymbol{\alpha}_i^* Q_i G(z) Q_i'\boldsymbol{\beta}_i$ only causes an error of order $\Psi$. This estimate was obtained in Lemma~4.1 of~\cite{BES15b} for the case $\boldsymbol{\alpha}_i=\boldsymbol{\beta}_i=\mathbf{e}_i$ and $Q_i=Q_i'=I$, \ie $\boldsymbol{\alpha}_i^* Q_i G(z) Q_i'\boldsymbol{\beta}_i=G_{ii}$. For the other choices of $\boldsymbol{\alpha}_i,\boldsymbol{\beta}_i, Q_i$ and~$Q_i'$, the proof is nearly the same. We leave the details to the reader.
\end{proof}

\section{Proof of Theorem~\ref{thm.fluctuation averaging}} \label{s. proof of main theorem}
In this section, we prove Theorem~\ref{thm.fluctuation averaging} with the aid of the following Proposition~\ref{pro.fluctuation averaging with Upsilon}, which will be proved in Section~\ref{s. proof of pro.fluctuation averaging with Upsilon}. We introduce the tracial quantity $\Upsilon$ by setting
\begin{align}
\Upsilon\equiv\Upsilon(z)\deq \ntr \big(\wt{B}G\big)-\big( \ntr \big(\wt{B}G\big)\big)^2+\ntr G\;\ntr\big(\wt{B}G\wt{B}\big). \label{definition of Upsilon}
\end{align}
Fix a (small) $\gamma>0$. Using the identities
\begin{align}
\wt{B}G=I-(A-z)G,\qquad \wt{B}G\wt{B}=\wt{B}-A+z+(A-z)G(A-z), \label{021701}
\end{align}
and the estimate in~(\ref{entrywise estimate}), it is straightforward to check the {\it a priori} bound 
\begin{align}
|\Upsilon|\prec \Psi, \label{022904}
\end{align}
uniformly on $\mathcal{S}_{\mathcal{I}}(\eta_{\mathrm{m}},1)$, with $\eta_{\mathrm{m}}$ as in~\eqref{030528}. Theorem~\ref{thm.fluctuation averaging} then follows from the following key estimate.
\begin{pro} \label{pro.fluctuation averaging with Upsilon} Suppose that the assumptions in Theorem~\ref{thm.convergence rate} and~\eqref{le b equation} hold. Fix any (small) $\gamma>0$. Then,
\begin{align}
\bigg|\frac{1}{N}\sum_{i=1}^N d_i \bigg(G_{ii}(z)- \frac{1}{a_i-\omega_B^c(z)-\frac{\Upsilon(z)}{\ntr G(z)}}\bigg)\bigg|\prec\Psi^2, \label{fluctuation averaging with Upsilon}
\end{align}
uniformly on $\mathcal{S}_{\mathcal{I}}(\eta_{\mathrm{m}},1)$ with $\eta_{\mathrm{m}}\equiv \eta_{\mathrm{m}}(\gamma)$. By switching the r\^oles 
of $A$ and $B$, a similar statement holds
for $\mathcal{G}_{ii}$ defined in~(\ref{Green functions}) 
if $a_i$ and $\omega_B^c$ are replaced with $b_i$ and $\omega_A^c$, respectively.
\end{pro}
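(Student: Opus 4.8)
The plan is to derive the resolvent identity for $G_{ii}$ via the partial randomness decomposition, isolate the fluctuating quadratic forms, and then invoke a fluctuation-averaging estimate for a suitable linear combination. I would proceed as follows.

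First I would use the decomposition $H = A + R_i \wt B^{\la i\ra} R_i$ together with the identity $(H-z)G = I$ restricted to the $i$th coordinate. Writing $R_i = I - \mathbf{r}_i\mathbf{r}_i^*$ with $\mathbf{r}_i = \ell_i(\mathbf{e}_i + \mathbf{h}_i)$, and using the reflection identities~\eqref{0224101} and~\eqref{030430}, I would expand $(R_i \wt B^{\la i\ra} R_i G)_{ii}$ in terms of the quantities $S_i = \mathbf{h}_i^*\wt B^{\la i\ra} G \mathbf{e}_i$ and $T_i = \mathbf{h}_i^* G \mathbf{e}_i$ from~\eqref{022920}, as well as $G_{ii}$ and $(\wt B G)_{ii}$. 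This yields, after rearranging, an expression of the form
\begin{align*}
G_{ii} = \frac{1}{a_i - z - [\text{quadratic-form terms}]},
\end{align*}
where the bracketed terms, when conditioned on $\mathbf{v}_i$ and expanded to the relevant order, concentrate (by Lemma~\ref{lem. bound for quadratic forms} and Gaussian concentration for $\mathbf{h}_i$) around a deterministic-up-to-$\Upsilon$ expression. The point is to match this with $\omega_B^c(z) + \Upsilon(z)/\ntr G(z)$: the leading contribution produces $\omega_B^c$ via its definition~\eqref{def of approximate subordination functions} and~\eqref{030520}, while the $\Upsilon$-correction~\eqref{definition of Upsilon} collects precisely the subleading cross terms $\ntr(\wt B G\wt B)$, $(\ntr \wt B G)^2$, etc., that survive the conditional expectation.

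Second — and this is the crux — I would reduce~\eqref{fluctuation averaging with Upsilon} to a fluctuation-averaging statement: it suffices to show that the weighted average $\frac1N\sum_i d_i (\text{individual fluctuation})_i \prec \Psi^2$, where each individual fluctuation is the difference between a quadratic form in $\mathbf{h}_i$ and its conditional expectation, hence of naive size $\Psi$. The gain of the extra $\Psi$ comes from averaging over $i$. Rather than computing high moments directly, I would follow the recursive-moment strategy announced in the introduction (the method of~\cite{LS16}): estimate $\E|\frac1N\sum_i d_i Z_i|^{2p}$ in terms of lower powers via Gaussian integration by parts in the variables $g_{ik}$, exploiting that each integration by parts either produces an off-diagonal Green function entry (small by~\eqref{entrywise estimate}) or a factor $\frac1N\sum_j(\cdots)$ (small by averaging). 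The auxiliary quantity $Z_i$ — the specific linear combination of $G_{ii}$ and $(\wt B G)_{ii}$ referenced in the introduction — is chosen so that the order-$\Psi$ fluctuations with no averaging gain cancel algebraically; the residual $\Upsilon$ is then shown \emph{a posteriori} to be $\prec \Psi^2$ as a byproduct of the same recursion applied to its constituent traces.

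Third, I would combine the conditional-expectation computation with the fluctuation-averaging bound: the deterministic part reconstructs $a_i - \omega_B^c - \Upsilon/\ntr G$ (using~\eqref{022806} to identify $z - \omega_A^c - \omega_B^c = -1/\ntr G$), the fluctuating part is $\prec \Psi^2$ after weighting by $d_i$ and averaging, and the denominators are bounded away from zero by~\eqref{le lower bound on omega AB} together with~\eqref{022910}. Finally, the $A\leftrightarrow B$ symmetric statement for $\mathcal G_{ii}$ follows verbatim upon replacing $U$ by $U^*$, which swaps $A \leftrightarrow B$, $\wt B \leftrightarrow \wt A$, $\omega_B^c \leftrightarrow \omega_A^c$, and leaves $m_H$ and $\Upsilon$ invariant in form by~\eqref{022805}. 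I expect the main obstacle to be the bookkeeping in the Gaussian integration by parts: because $\mathbf{h}_i$ is a normalized Gaussian and $\ell_i$ is $\chi$-distributed rather than Gaussian, each integration by parts generates numerous error terms from differentiating $\|\mathbf{g}_i\|_2$ and $\ell_i$, and one must verify — using the estimates~\eqref{012001} and the a priori bound~\eqref{022904} on $\Upsilon$ — that all of these are of the required order $\Psi^2$ or better before the recursion closes.
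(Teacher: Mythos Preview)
Your proposal captures the right core --- the auxiliary quantity $Z_i$ and the recursive high-moment estimate via Gaussian integration by parts --- and these are indeed the heart of the matter. However, your \emph{first step} is more elaborate than what the paper actually does, and the over-engineering risks obscuring why $\Upsilon$ enters where it does.

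The paper's reduction from the Proposition to the $Z_i$ bound is a one-line algebraic identity, not a conditional-expectation computation. Using only $(a_i-z)G_{ii}=1-(\wt BG)_{ii}$ (which is just the $i$th diagonal entry of $(H-z)G=I$) and the definition of $\omega_B^c$, one writes
\[
G_{ii}-\frac{1}{a_i-\omega_B^c-\frac{\Upsilon}{\ntr G}}
=\frac{G_{ii}(\ntr\wt BG-\Upsilon)-(\wt BG)_{ii}\ntr G}{(a_i-z)\ntr G+\ntr\wt BG-\Upsilon}
=\frac{-Z_i}{(a_i-z)\ntr G+\ntr\wt BG-\Upsilon}\,,
\]
where $Z_i\deq(\wt BG)_{ii}\ntr G-G_{ii}(\ntr\wt BG-\Upsilon)$ is \emph{defined} so that this identity holds exactly. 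No partial randomness decomposition, no conditional expectation, no matching of subleading terms --- the partial randomness machinery enters only later, in the proof of the moment bound $\E\big|\frac1N\sum_i d_i Z_i\big|^{2p}\prec\Psi^{4p}$ (your second step, which you describe correctly). The denominator is then controlled by \eqref{022910}, \eqref{022904}, \eqref{le lower bound on omega AB} as you say, and the numerator's average is $\prec\Psi^2$ by Markov plus a lattice argument for uniformity in $z$.

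One further clarification: the \emph{a posteriori} bound $|\Upsilon|\prec\Psi^2$ is not part of this Proposition at all --- the statement deliberately keeps $\Upsilon$ in the denominator. That improvement is established separately (in the proof of Theorem~\ref{thm.fluctuation averaging}), by applying the Proposition with the specific weights $d_i=1$, $a_i-z$, $(a_i-z)^2$ and observing a miraculous algebraic cancellation in the resulting self-consistent equation for $\Upsilon$.
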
 
With Proposition~\ref{pro.fluctuation averaging with Upsilon}, we prove Theorem~\ref{thm.fluctuation averaging} and Theorem~\ref{thm: subordination functions} at once.

\begin{proof}[Proof of Theorem~\ref{thm.fluctuation averaging} and Theorem~\ref{thm: subordination functions}]
Fix a (small) $0<\gamma<1/2$. Recall the {\it a priori} bound of $\Upsilon$ in~(\ref{022904}). First, with Proposition~\ref{pro.fluctuation averaging with Upsilon}, we show that the improved bound
\begin{align}
|\Upsilon|\prec \Psi^2 \label{022801}
\end{align}
holds uniformly on $\mathcal{S}_{\mathcal{I}}(\eta_{\mathrm{m}},1)$.  Using the identities in~(\ref{021701}), the convention~(\ref{022902}),  the {\it a priori} bound~(\ref{022904}), and the bound~(\ref{fluctuation averaging with Upsilon}) with $d_i=1$, $a_i-z$ and $(a_i-z)^2$ in the estimate of $\ntr G$, $\ntr (\wt{B}G)$ and $\ntr (\wt{B}G\wt{B})$, respectively,  
we get 
\begin{align}
\ntr G=&\ntr \big(A-\omega_B^c-\frac{\Upsilon}{\ntr G}\big)^{-1}+O_\prec\big(\Psi^2\big)\nonumber\\
=&\frac{1}{N}\sum_{i=1}^N\frac{1}{a_i-\omega_B^c}+\frac{1}{N}\sum_{i=1}^N\frac{1}{(a_i-\omega_B^c)^2}\frac{\Upsilon}{\ntr G}+O_\prec\big(\Psi^2\big)\nonumber\\
=& m_A(\omega_B^c)+m'_A(\omega_B^c)\frac{\Upsilon}{\ntr G}+O_\prec\big(\Psi^2\big),\label{estimate of tracial quantities 1}\\
\ntr (\wt{B}G)=&1-\ntr \Big((A-z)\big(A-\omega_B^c-\frac{\Upsilon}{\ntr G}\big)^{-1}\Big)+O_\prec\big(\Psi^2\big)\nonumber\\
=&1-\frac{1}{N}\sum_{i=1}^N\frac{a_i-z}{a_i-\omega_B^c}-\frac{1}{N}\sum_{i=1}^N\frac{a_i-z}{(a_i-\omega_B^c)^2}\frac{\Upsilon}{\ntr G}+O_\prec\big(\Psi^2\big)\nonumber\\
=&(z-\omega_B^c) m_A(\omega_B^c)-\Big(m_A(\omega_B^c)+(\omega_B^c-z)m'_A(\omega_B^c)\Big)\frac{\Upsilon}{\ntr G}+O_\prec\big(\Psi^2\big),\label{estimate of tracial quantities 2}\\
\ntr (\wt{B}G\wt{B})=&z+\ntr \Big((A-z)^2\big(A-\omega_B^c-\frac{\Upsilon}{\ntr G}\big)^{-1}\Big)+O_\prec\big(\Psi^2\big)\nonumber\\
=&z+\frac{1}{N}\sum_{i=1}^N\frac{(a_i-z)^2}{a_i-\omega_B^c}+\frac{1}{N}\sum_{i=1}^N\frac{(a_i-z)^2}{(a_i-\omega_B^c)^2}\frac{\Upsilon}{\ntr G}+O_\prec\big(\Psi^2\big)\nonumber\\
=&\omega_B^c-z+(\omega_B^c-z)^2 m_A(\omega_B^c)\nonumber\\
&\qquad+\Big(1+2(\omega_B^c-z) m_A(\omega_B^c)+(\omega_B^c-z)^2 m_A'(\omega_B^c)\Big)\frac{\Upsilon}{\ntr G}+O_\prec\big(\Psi^2\big),\label{estimate of tracial quantities}
\end{align}
where we also used $|a_i-\omega_B^c(z)|^{-1}\leq (\Im \omega_B^c(z))^{-1}\prec 1$ that follows from the  facts $|\omega_B^c(z)-\omega_B(z)|\prec \Psi$ and $\Im \omega_B(z)\geq k$ uniformly on $\mathcal{S}_{\mathcal{I}}(\eta_{\mathrm{m}},1)$ from~(\ref{022910}) and~(\ref{le lower bound on omega AB}), respectively. Here, $m_A'(z)$ denotes the derivative with respect to $z$ of $m_A(z)$.

Recall the definition of $\Upsilon$ in~(\ref{definition of Upsilon}). Using~(\ref{estimate of tracial quantities 1})--(\ref{estimate of tracial quantities}) and the {\it a priori} bound $|\Upsilon|\prec\Psi$ of~(\ref{022904}), we~write
\begin{align}
\Upsilon =\ntr \big(\wt{B}G\big)-\big( \ntr \big(\wt{B}G\big)\big)^2+\ntr G\;\ntr\big(\wt{B}G\wt{B}\big)=: C_1+C_2\frac{\Upsilon}{\ntr G}+O_\prec\big(\Psi^2\big),\label{equation of Upsilon}
\end{align}
where $C_1\equiv C_1(z)$ and $C_2\equiv C_2(z)$ are coefficients collected from~(\ref{estimate of tracial quantities 1})--(\ref{estimate of tracial quantities}).
It is easy to check~that
\begin{align*}
C_1(z)=&(z-\omega_B^c) m_A(\omega_B^c)-(z-\omega_B^c)^2 \big(m_A(\omega_B^c)\big)^2+m_A(\omega_B^c)\Big(\omega_B^c-z+(\omega_B^c-z)^2 m_A(\omega_B^c)\Big)=0,
\end{align*}
and 
\begin{align*}
C_2(z)=&-\Big(m_A(\omega_B^c)+(\omega_B^c-z)m'_A(\omega_B^c)\Big)+2(z-\omega_B^c) m_A(\omega_B^c)\Big(m_A(\omega_B^c)+(\omega_B^c-z)m'_A(\omega_B^c)\Big)\nonumber\\
&\qquad+m_A(\omega_B^c)\Big(1+2(\omega_B^c-z) m_A(\omega_B^c)+(\omega_B^c-z)^2 m_A'(\omega_B^c)\Big)\nonumber\\
&\qquad+m'_A(\omega_B^c)\Big(\omega_B^c-z+(\omega_B^c-z)^2 m_A(\omega_B^c)\Big)=0,
\end{align*}
for all $z\in\C^+$, \ie $C_1$ and $C_2$ vanish identically. Hence, from~\eqref{equation of Upsilon} we verified~(\ref{022801}).

Now, applying~(\ref{022801}), the facts $|\omega_B^c(z)-\omega_B(z)|\prec \Psi$, and $\Im \omega_B(z)\geq k$ uniformly on $\mathcal{S}_{\mathcal{I}}(\eta_{\mathrm{m}},1)$ from~(\ref{022910}) and~(\ref{le lower bound on omega AB}), we see from~(\ref{fluctuation averaging with Upsilon}) that  
\begin{align}
\bigg|\frac{1}{N}\sum_{i=1}^N d_i \bigg(G_{ii}(z)- \frac{1}{a_i-\omega_B^c(z)}\bigg)\bigg|\prec\Psi^2. \label{022802}
\end{align}
Switching the r\^oles of $A$ and $B$, $U$ and $U^*$, we also have
\begin{align}
\bigg|\frac{1}{N}\sum_{i=1}^N d_i \bigg(\mathcal{G}_{ii}(z)- \frac{1}{b_i-\omega_A^c(z)}\bigg)\bigg|\prec\Psi^2, \label{022803}
\end{align}
where $\mathcal{G}$ is defined in ~(\ref{Green functions}).

Setting $d_i$ to be $1$ for all $i\in \llbracket1, N\rrbracket$ in~(\ref{022802}) and~(\ref{022803}), and using~(\ref{022805}),  we obtain
\begin{align}
m_H(z)-m_A(\omega_B^c(z))=O_\prec(\Psi^2),\qquad m_H(z)-m_B(\omega_A^c(z))=O_\prec(\Psi^2). \label{022808}
\end{align}
Recalling~(\ref{022806}) and applying the {\it a priori} estimate on $\omega_A^c(z)$ and $\omega_B^c(z)$ in~(\ref{022910})  and the lower bound for $\Im \omega_A(z)$ and $\Im \omega_B(z)$ in~(\ref{le lower bound on omega AB}), we can rewrite~(\ref{022808}) as 
\begin{align*}
\big\|\Phi_{\mu_A,\mu_B}(\omega_A^c, \omega_B^c,z)\big \|_2\prec \Psi^2,
\end{align*}
where $\Phi_{\mu_A,\mu_B}$ is defined in~(\ref{le H system defs}).
Then, by Proposition 4.1 of~\cite{BES15}, we have the improved bound
\begin{align}
|\omega_A^c(z)-\omega_A(z)|\prec \Psi^2,\qquad |\omega_B^c(z)-\omega_B(z)|\prec \Psi^2. \label{022810}
\end{align}
This completes the proof of Theorem~\ref{thm: subordination functions}. 

Applying~(\ref{022810}) to~(\ref{022802}), we further get~(\ref{fluctuation averaging}) on $\mathcal{S}_{\mathcal{I}}(\eta_{\mathrm{m}},1)$. To extend the conclusion to all of~$\mathcal{S}_{\mathcal{I}}(0,1)$, we use the monotonicity of the Green function. Since $G'_{ii}(z)=\sum_{k=1}^N G_{ik}(z)G_{ki}(z)$, we have
\begin{align*}
 |G'_{ii}(z)|\le \sum_{k=1}^N |G_{ik}(z)|^{2}=\frac{\im G_{ii}(z)}{\eta}\,,
\end{align*}
as follows from the spectral decomposition of $H$. Note next that the function $s\rightarrow s \im G_{ii}(E+\ii s)$ is monotone increasing. Thus for any $\eta\in(0,\eta_{\mathrm{m}}]$, we have
\begin{align}\label{le mono 1}
 |d_{i}G_{ii}(E+\ii\eta)-d_{i}G_{ii}(E+\ii\eta_{m})|&\le |d_{i}|\int_{\eta}^{\eta_{\mathrm{m}}}\frac{s\im G_{ii}(E+\ii s)}{s^2}\,\dd s\nonumber\\
 &\le 2|d_{i}| \frac{\eta_{\mathrm{m}}}{\eta} \im G_{ii}(E+\ii\eta_{\mathrm{m}})\le C\frac{N^\gamma}{N\eta}\le CN^\gamma\Psi^2\,,
\end{align}
with high probability, for any $E\in\mathcal{I}$, where we used Proposition~\ref{thm. entrywise estimate} to bound $\im G_{ii}(z)\prec 1$, $z\in\mathcal{S}_{\mathcal{I}}(\eta_\mathrm{m},1)$. On the other hand, by~Lemma~\ref{cor.080601}, $\omega_A'(z)$ is uniformly bounded from above on $\mathcal{S}_{\mathcal{I}}(0,1)$ and $|a_i-\omega_B(z)|$ is uniformly bounded from below on $\mathcal{S}_{\mathcal{I}}(0,1)$. Thus 
\begin{align}\label{le mono 2}
 \Big|d_{i}\frac{1}{a_i-\omega_A(E+\ii\eta)}-d_{i}\frac{1}{a_i-\omega_A(E+\ii\eta_{\mathrm{m}})}\Big|\le C (\eta_\mathrm{m}-\eta)\le\Psi^2,\qquad \eta\in(0,\eta_{\mathrm{m}}],\qquad E\in\mathcal{I},
\end{align}
since $\gamma<1/2$. Hence, from~\eqref{le mono 2} and~\eqref{le mono 1}, we conclude by triangle inequality that~\eqref{fluctuation averaging} holds uniformly on $\mathcal{S}_{\mathcal{I}}(0,1)$ since it holds on $\mathcal{S}_{\mathcal{I}}(\eta_{\mathrm{m}},1)$. This proves~\eqref{fluctuation averaging} and concludes the proof of Theorem~\ref{thm.fluctuation averaging}.
\end{proof}

\section{Proof of Proposition~\ref{pro.fluctuation averaging with Upsilon}} \label{s. proof of pro.fluctuation averaging with Upsilon} 

In this section, we prove Proposition~\ref{pro.fluctuation averaging with Upsilon}, assuming the validity of Lemma~\ref{pro. estimate of high order moment} below, whose proof is postponed to Section~\ref{s.proof of pro. estimate of high order moment}. Let us introduce the notation
\begin{align}
Z_i\deq (\wt{B}G)_{ii}\ntr G-G_{ii}(\ntr \wt{B}G-\Upsilon).  \label{def of Z}
\end{align}
We have the following lemma.
\begin{lem} \label{pro. estimate of high order moment}Suppose that the assumptions in Theorem~\ref{thm.convergence rate} and~\eqref{le b equation} hold. Then, for any fixed integer $p\geq 2$, we have
\begin{align}
\mathbb{E}\Big[\Big|\frac{1}{N}\sum_{i=1}^N d_i Z_i\Big|^{2p}\Big]\prec \Psi^{4p}, \label{high order moment estimate}
\end{align}
uniformly on $\mathcal{S}_{\mathcal{I}}(\eta_{\mathrm{m}},1)$.
\end{lem}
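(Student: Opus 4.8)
The plan is to prove Lemma~\ref{pro. estimate of high order moment} by a recursive (self-improving) moment estimate in the spirit of~\cite{LS16}, rather than by a direct expansion of all $2p$ factors. First I would observe that the key algebraic point is a cancellation hidden in the definition~\eqref{def of Z} of $Z_i$: if one applies the partial randomness decomposition~\eqref{decomposition} and uses the identities~\eqref{0224101} and~\eqref{030430} to compute $(\wt{B}G)_{ii}$ and $G_{ii}$ in terms of the Householder-conjugated resolvent $G^{\la i\ra}$ and the quantities $S_i,T_i$ of~\eqref{022920}, then the leading-order term of $Z_i$, which is naively of size $\Psi$, actually vanishes; one is left with a combination of products of the fluctuating quantities $S_i-\E_i S_i$, $T_i$, and $\ell_i^2-(1-g_{ii})$, each of which is $O_\prec(\Psi)$ by Lemma~\ref{lem. bound for quadratic forms} and~\eqref{012001}, so that $Z_i=O_\prec(\Psi^2)$ pointwise in $i$. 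The lemma claims that averaging over $i$ gains \emph{another} factor $\Psi$, i.e. $\frac1N\sum_i d_i Z_i = O_\prec(\Psi^2)$ rather than the trivial $O_\prec(\Psi^2)$ we already have pointwise, but with the strong (high-moment) form needed to upgrade $\prec$; wait, more precisely the gain is that the $2p$-th moment is $\Psi^{4p}$ and not merely $\Psi^{2p}\cdot(\text{pointwise bound})$, which is the content of fluctuation averaging.

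The main computation is as follows. Write $\E[|\frac1N\sum_i d_i Z_i|^{2p}] = \frac{1}{N^{2p}}\sum_{i_1,\dots,i_{2p}} \E[d_{i_1}Z_{i_1}\cdots \overline{d_{i_{2p}}Z_{i_{2p}}}]$ and, fixing one index, say $i_1=i$, expand $\E$ as $\E[\E_i[\,\cdot\,]]$ where $\E_i$ denotes integration over $\mathbf{v}_i$ (equivalently over the Gaussian vector $\mathbf{g}_i$) with everything else frozen. The crucial step is to perform Gaussian integration by parts in the components of $\mathring{\mathbf{g}}_i$ on the factor $Z_i$: each derivative hitting $G$ or $G^{\la i\ra}$ through the $\mathbf{r}_i$-dependence produces either another resolvent entry (bounded by Lemma~\ref{lem. bound for quadratic forms} and~\eqref{entrywise estimate}) and a factor $\Psi$ from the normalization $N^{-1/2}$ of $\mathbf{g}_i$, or it hits one of the other $Z_{i_k}$ factors ($k\ge 2$) and thereby lowers the effective number of ``free'' fluctuating factors by one. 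Because the leading part of $Z_i$ has been arranged to cancel, the integration by parts shows $\E_i Z_i = O_\prec(\Psi^2)\cdot(\text{resolvent entries})$ is too crude; instead one tracks that $\E_i[Z_i \cdot (\text{rest})] $ is, up to $O_\prec(\Psi^{2D})$ negligible errors, a sum of terms in which either (a) $Z_i$ has been replaced by its $\E_i$-conditional expectation, which is $O_\prec(\Psi^3)$ (the extra $\Psi$ being the averaging gain, since the $O(\Psi^2)$-sized fluctuating products have conditional mean that is smaller by the usual $N^{-1/2}$), or (b) the derivative has landed on another factor $Z_{i_k}$, producing a term with $2p-2$ genuine $Z$-factors and two ``linking'' resolvent factors each contributing $\Psi$. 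In both cases one obtains a recursion of the schematic form
\begin{align*}
\E\Big[\Big|\tfrac1N\textstyle\sum_i d_i Z_i\Big|^{2p}\Big]\prec \Psi^{2}\sum_{s=0}^{2p-1}\Psi^{2p-s}\,\E\Big[\Big|\tfrac1N\textstyle\sum_i d_i Z_i\Big|^{s}\Big]^{?}+\Psi^{4p},
\end{align*}
or, more honestly, a bound of $\E[|X|^{2p}]\prec \Psi^{4p}+\Psi^{2}\E[|X|^{2p-1}]+\cdots$ with $X=\frac1N\sum_i d_i Z_i$, which after an induction on $p$ (and using $\E|X|^{2p-1}\le (\E|X|^{2p-2})^{1/2}(\E|X|^{2p})^{1/2}$ and Young's inequality to absorb the top moment) closes to give $\E[|X|^{2p}]\prec \Psi^{4p}$. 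The base case $p=1$ (and the a priori $|X|\prec\Psi^2$ pointwise-then-averaged bound, which at least gives $|X|\prec \Psi$ trivially from Lemma~\ref{lem. bound for quadratic forms}) feeds the induction.

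The hard part will be the bookkeeping of the integration-by-parts expansion: one must show that the many error terms generated — from the difference $\ell_i^2-(1-g_{ii})$, from the $\chi$-distributed diagonal entry $g_{ii}$ which is not Gaussian (handled by working with $\mathring{\mathbf{g}}_i$ and treating $g_{ii}$ separately, \cf the remark before~\eqref{021911}), from replacing $G$ by $G^{\la i\ra}$ and back, and from the derivatives of the normalization $\|\mathbf{g}_i\|_2$ — are all either genuinely of size $O_\prec(\Psi^{2p}\cdot\Psi^{2p})$ or reducible to lower moments of $X$, and crucially that the \emph{cancellation in the leading term of $Z_i$} survives each such manipulation; this is exactly the point where the specific linear combination in~\eqref{def of Z} (and the choice of $\Upsilon$ in~\eqref{definition of Upsilon} that sits inside it) is used. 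A secondary technical point is uniformity in $z\in\mathcal{S}_{\mathcal{I}}(\eta_{\mathrm m},1)$ and in the deterministic weights $d_i$ with $\max_i|d_i|\le 1$, which is routine given the uniform estimates of Proposition~\ref{thm. entrywise estimate} and Lemma~\ref{lem. bound for quadratic forms} but must be carried through the recursion. I would organize the proof so that the algebraic identity for $Z_i$ in terms of $G^{\la i\ra}$, $S_i$, $T_i$, $\ell_i$ is isolated as a preliminary computation, then state the single-step integration-by-parts bound as a sub-lemma, and finally run the induction on $p$.
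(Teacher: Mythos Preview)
Your high-level strategy---a recursive moment estimate via Gaussian integration by parts in the components of $\mathring{\mathbf{g}}_i$, followed by Young's inequality to close the recursion---is exactly what the paper does. In particular the recursion you aim for is the paper's Proposition~\ref{lem.021902}:
\begin{align*}
\E\big[\mathfrak{q}(p,p)\big]=\E\big[O_\prec(\Psi^2)\,\mathfrak{q}(p-1,p)\big]+\E\big[O_\prec(\Psi^4)\,\mathfrak{q}(p-2,p)\big]+\E\big[O_\prec(\Psi^4)\,\mathfrak{q}(p-1,p-1)\big],
\end{align*}
with $\mathfrak{q}(k,l)=(\frac1N\sum_i d_iZ_i)^k(\overline{\frac1N\sum_i d_iZ_i})^l$, and Young's inequality then gives $\E[\mathfrak{q}(p,p)]\prec\Psi^{4p}$.

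However, your description of the \emph{mechanism} of the cancellation is off in a way that matters. You assert that after the partial randomness decomposition the leading $\Psi$-sized part of $Z_i$ cancels, leaving $Z_i=O_\prec(\Psi^2)$ pointwise. This is false: the paper records (see~\eqref{022915}) only $|Z_i|\prec\Psi$, and indeed if you expand $Z_i=\ntr G-G_{ii}\big[(a_i-z)\ntr G+\ntr(\wt{B}G)-\Upsilon\big]$ around the deterministic approximations, the constant term cancels but the linear fluctuation terms do \emph{not}, so $Z_i$ is genuinely of order $\Psi$. Consequently your claim that $\E_i Z_i=O_\prec(\Psi^3)$ and your schematic recursion built on it do not go through as stated.

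The actual cancellation in the paper occurs \emph{after} integration by parts, not before. One first writes $(\wt{B}G)_{ii}=-\mathring{\mathbf{h}}_i^*\wt{B}^{\la i\ra}G\mathbf{e}_i+\mathbf{h}_i^*\wt{B}^{\la i\ra}\mathbf{h}_i\,G_{ii}+O_\prec(\Psi^2)$ (this is~\eqref{021912}), and then performs Gaussian IBP in $\bar g_{ik}$ on the factor $\mathring{\mathbf{h}}_i^*\wt{B}^{\la i\ra}G\mathbf{e}_i\cdot\ntr G$ inside the full product $\mathfrak{q}(p,p)$. The output of this IBP, computed in~\eqref{021629}--\eqref{021631}, produces precisely $-\|\mathbf{g}_i\|_2\,G_{ii}(\ntr\wt{B}G-\Upsilon)$ as its main term, which cancels against the second piece of $Z_i$ in~\eqref{def of Z}. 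This is where the role of $\Upsilon$ is essential: it is defined exactly so that the tracial combination $\ntr G\,\ntr(\wt{B}G\wt{B})-(\ntr\wt{B}G)^2$ arising from IBP collapses to $-(\ntr\wt{B}G-\Upsilon)$. The remaining terms are either genuinely $O_\prec(\Psi^2)$ multiplying $\mathfrak{q}(p-1,p)$, or come from the derivative hitting another $Z_j$ factor and are $O_\prec(\Psi^4)$ times $\mathfrak{q}(p-2,p)$ or $\mathfrak{q}(p-1,p-1)$; this is Lemma~\ref{lem. 022301}. So the ``preliminary computation isolating an algebraic identity for $Z_i$'' you propose should instead be a computation of what IBP does to one factor of $Z_i$ while the other $2p-1$ factors are still present, and the cancellation is between the IBP output of $(\wt{B}G)_{ii}\ntr G$ and the un-differentiated term $-G_{ii}(\ntr\wt{B}G-\Upsilon)$.
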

Next, we prove Proposition~\ref{pro.fluctuation averaging with Upsilon}, with the aid of Lemma~\ref{pro. estimate of high order moment}.
\begin{proof}[Proof of Proposition~\ref{pro.fluctuation averaging with Upsilon}]
Recall the definition of $\omega_B^c(z)$ in~(\ref{def of approximate subordination functions}). 
Using the identity 
\begin{align}
(a_i-z)G_{ii}=-(\wt{B}G)_{ii}+1, \label{032001}
\end{align} 
we can write
\begin{align}
\frac{1}{N}\sum_{i=1}^N d_i \bigg(G_{ii}(z)-\frac{1}{a_i-\omega_B^c(z)-\frac{\Upsilon}{\ntr G}}\bigg)=\frac{1}{N}\sum_{i=1}^N d_i\frac{G_{ii}(\ntr \wt{B}G-\Upsilon)-(\wt{B}G)_{ii}\ntr G}{(a_i-z)\ntr G+\ntr \wt{B}G-\Upsilon}. \label{021901}
\end{align}
Recall the definition of $\Upsilon$ in~(\ref{definition of Upsilon}) and the {\it a priori} bound~(\ref{022904}). Using~(\ref{032001}),~(\ref{021701}) and~(\ref{entrywise estimate}), it is straightforward to check that
\begin{align}
\big|\ntr \wt{B}G-(z-\omega_B)m_A(\omega_B)\big|\prec\Psi,\qquad
|Z_i|=\big|G_{ii}(\ntr \wt{B}G-\Upsilon)-(\wt{B}G)_{ii}\ntr G\big|\prec\Psi.  \label{022915}
\end{align}
Hence, using~(\ref{022910}),~(\ref{022904}) and~(\ref{022915}), we obtain from~(\ref{021901}) that 
\begin{align*}
&\frac{1}{N}\sum_{i=1}^N d_i \bigg(G_{ii}(z)-\frac{1}{a_i-\omega_B^c(z)-\frac{\Upsilon}{\ntr G}}\bigg)\nonumber\\
&\qquad=\frac{1}{N}\sum_{i=1}^N \frac{d_i}{(a_i-\omega_B)m_A(\omega_B)}\Big(G_{ii}(\ntr \wt{B}G-\Upsilon)-(\wt{B}G)_{ii}\ntr G\Big)+O_\prec\big(\Psi^2\big)\nonumber\\
&\qquad= \frac{1}{N}\sum_{i=1}^N \frac{d_i}{(\omega_B-a_i)m_A(\omega_B)}Z_i+O_\prec\big(\Psi^2\big).
\end{align*}
From Lemma~\ref{cor.080601} we have $\Im \omega_B(z)\geq k$ and $m_A (\omega_B) \gtrsim 1$ uniformly on $\mathcal{S}_{\mathcal{I}}(0, 1)$, which imply 
$$\Big|\frac{d_i}{(\omega_B-a_i)m_A(\omega_B)}\Big|\lesssim 1,$$
uniformly on $\mathcal{S}_{\mathcal{I}}(0, 1)$. Thus to prove~\eqref{fluctuation averaging}, we need to show that, for any deterministic numbers  $\widetilde d_1,\ldots, \widetilde d_N\in \mathbb{C}$ satisfying $\max_i|\widetilde d_i|\leq 1$, 
\begin{align}
\bigg|\frac{1}{N}\sum_{i=1}^N \widetilde d_iZ_i\bigg|\prec \Psi^2 \label{022815}
\end{align}
holds uniformly on $\mathcal{S}_{\mathcal{I}}(\eta_{\mathrm{m}},1)$.

For fixed $z\in \mathcal{S}_{\mathcal{I}}(\eta_{\mathrm{m}},1)$, the estimate~\eqref{022815} follows from Lemma~\ref{pro. estimate of high order moment} and Markov's inequality. To get a uniform bound on $\mathcal{S}_{\mathcal{I}}(\eta_{\mathrm{m}},1)$, we choose $|\mathcal{I}| N^{8}$~lattice points $z_1, z_2,\ldots, z_{|\mathcal{I}|N^{8}}$ in $\mathcal{S}_{\mathcal{I}}(\eta_{\mathrm{m}},1)$ such that for any $ z \in \mathcal{S}_{\mathcal{I}}(\eta_{\mathrm{m}},1)$ there exists $z_n$ satisfying $|z-z_n| \leq N^{-4}$. Then using the Lipschitz continuity of  $Z_i(z)$ in $z$  with Lipschitz constant bounded by $C\eta^{-3}$, for $C$ sufficiently large, and using~\eqref{022815} for all lattice points we get~\eqref{022815} uniformly on $\mathcal{S}_{\mathcal{I}}(\eta_{\mathrm{m}},1)$ from a union bound. This completes the proof of Proposition~\ref{pro.fluctuation averaging with Upsilon}.
\end{proof}

\section{Proof of Lemma~\ref{pro. estimate of high order moment}} \label{s.proof of pro. estimate of high order moment}
In this section, we prove Lemma~\ref{pro. estimate of high order moment}. Let $Z_i$ and $d_i$ be as in Lemma~\ref{pro. estimate of high order moment}. For $k,l\in\N$, set
\begin{align}\label{the q}
\mathfrak{q}(k,l)\deq \Big(\frac{1}{N}\sum_{i=1}^N d_iZ_i\Big)^{k}\Big({\frac{1}{N}\sum_{i =1}^N\overline{ d_iZ_i}}\Big)^{l}\,.
\end{align}

To prove Lemma~\ref{pro. estimate of high order moment} we then need to show that $\E[\mathfrak{q}(p,p)]\prec \Psi^{4p}$, uniformly on $\mathcal{S}_{\mathcal{I}}(\eta_{\mathrm{m}},1)$. This is accomplished by using a recursive estimate for $\E[\mathfrak{q}(p,p)]$, see~Proposition~\ref{lem.021902} below. The use of recursive moment estimates for the fluctuation averaging mechanism was introduced in~\cite{LS16}.

In the rest of the paper, we use the following convention:  the notation $O_\prec(\Psi^k)$, for any given positive integer $k$, stands for a generic (possibly) $z$-dependent random variable $X\equiv X(z)$ that satisfies 
\begin{align}
X\prec  \Psi^k\qquad \mbox{and} \qquad\mathbb{E}[|X|^q]\prec \Psi^{qk}, \label{032601}
\end{align}
for any given positive integer $q$. In the earlier works, the notation $O_\prec(\Psi^k)$ referred
only to the first bound, $X\prec  \Psi^k$, but in this paper it is convenient to require the second one as well.
Nevertheless, in 
  the sequel, we usually only check the first bound in~(\ref{032601}) for various $X$'s. It will be clear that the second bound in~(\ref{032601}) follows from the first one in all our applications. The reason is that
   the random variables~$X$ to be estimated below are either bounded by $O(\eta^{-k_1})  = O(N^{k_1})$ for some nonnegative constant $k_1$ deterministically,  or finite products of quadratic forms of the form 
\begin{align*}
f(z) \boldsymbol{\alpha}^* Q(z) \boldsymbol{\beta}.
\end{align*}
Here $f(z): \mathbb{C}^+\to\mathbb{C}$ is a generic function satisfying $|f(z)|\leq C\eta^{-{k_2}}$ and $Q(z):\mathbb{C}^+\to M_N(\mathbb{C})$ satisfying $\|Q\|\leq C\eta^{-k_3}$ for some finite positive constants $C$, $k_2$ and $k_3$, and where   $\boldsymbol{\alpha}$ and $\boldsymbol{\beta}$ are either Gaussian or deterministically bounded in the $\|\cdot\|_2$-norm. Then it is elementary to get the second bound in ~(\ref{032601}) from the first one by using the definition of $\prec$ in~(\ref{definition of stochastic domination}) together with the above deterministic bounds or the Gaussian tail of  $\boldsymbol{\alpha}$ or $\boldsymbol{\beta}$.

Our main aim in this section is to show the following proposition.
\begin{pro}(Recursive moment estimate) \label{lem.021902}Suppose that the assumptions in Theorem~\ref{thm.convergence rate} and~\eqref{le b equation} hold. For any fixed integer $p\geq 2$, we have
\begin{align}
\E\big[\mathfrak{q}(p,p)\big]=\mathbb{E}\big[O_\prec(\Psi^2) \mathfrak{q}(p-1,p)\big]+\mathbb{E}\big[O_\prec (\Psi^4) \mathfrak{q}(p-2,p)\big]+\mathbb{E}\big[O_\prec (\Psi^4) \mathfrak{q}(p-1,p-1)\big]. \label{030410}
\end{align}
\end{pro}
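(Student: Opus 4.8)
The plan is to prove the recursive moment estimate by applying Gaussian integration by parts to one of the factors $\frac{1}{N}\sum_i d_i Z_i$ appearing in $\E[\mathfrak{q}(p,p)]$. Concretely, write
\begin{align*}
\E\big[\mathfrak{q}(p,p)\big]=\E\Big[\Big(\frac{1}{N}\sum_{i=1}^N d_i Z_i\Big)\,\mathfrak{q}(p-1,p)\Big],
\end{align*}
and for each fixed $i$ express $Z_i$ using the partial randomness decomposition~\eqref{decomposition}, so that $Z_i$ becomes a function of the Gaussian vector $\mathring{\mathbf{g}}_i$ (together with the $\mathbf{v}_i$-independent objects $H^{\langle i\rangle}$, $G^{\langle i\rangle}$, and the scalar $g_{ii}$ which, being $\chi$-distributed, has been deliberately removed from $\mathring{\mathbf{g}}_i$). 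The key algebraic input is the identity $H=A+R_i\widetilde B^{\langle i\rangle}R_i$ with $R_i=I-\mathbf{r}_i\mathbf{r}_i^*$ and $\mathbf{r}_i=\ell_i(\mathbf{e}_i+\mathbf{h}_i)$; one expands the resolvent $G$ around $G^{\langle i\rangle}$ (or uses the resolvent identity / rank-one update formulas together with~\eqref{0224101} and~\eqref{030430}) to make the $\mathring{\mathbf{g}}_i$-dependence explicit, so that $\partial_{\bar g_{ik}}$-derivatives can be computed.

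The heart of the argument is then the cumulant/integration-by-parts expansion: since $g_{ik}\sim N_{\mathbb{C}}(0,1/N)$ for $k\neq i$, one has $\E[g_{ik}\,X]=\frac{1}{N}\E[\partial_{\bar g_{ik}}X]$, and summing over $k$ produces the derivative $\sum_k \partial_{\bar g_{ik}}$ acting on the product $Z_i\,\mathfrak{q}(p-1,p)$. Each derivative hitting a resolvent entry produces a factor that is, schematically, another quadratic form of the type $\boldsymbol\alpha_i^* Q_i G Q_i' \boldsymbol\beta_i$ (bounded by $O_\prec(1)$ via Lemma~\ref{lem. bound for quadratic forms}) times a gain of $N^{-1/2}=\Psi\sqrt\eta$ or, after resumming, of $\Psi^2$; derivatives hitting the $\mathfrak{q}(p-1,p)$ factor lower one of its indices and generate the terms $O_\prec(\Psi^4)\mathfrak{q}(p-2,p)$ and $O_\prec(\Psi^4)\mathfrak{q}(p-1,p-1)$. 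The crucial point — and the reason $Z_i$ rather than $G_{ii}$ is the right object — is that the naive $O(N^{-1/2})$ fluctuations of the individual terms $(\widetilde B G)_{ii}\,\ntr G$ and $G_{ii}(\ntr \widetilde B G-\Upsilon)$ cancel in the difference $Z_i$ for an algebraic reason (this is what the $\Upsilon$ correction is engineered to achieve), so that the leading contribution of each integration-by-parts step is genuinely $O_\prec(\Psi^2)$ rather than $O_\prec(\Psi)$. One must track the $\ell_i$ and $g_{ii}$ factors as well, but by~\eqref{012001} they contribute only lower-order corrections absorbed into $O_\prec$; the truncation of the sum over $k$ and the replacement $\widetilde{\mathbf g}_i\mapsto \mathbf g_i\mapsto \mathring{\mathbf g}_i$ similarly cost at most $\Psi$, hence are negligible after the cancellation.

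The main obstacle I expect is bookkeeping the combinatorially many terms generated when $\sum_k\partial_{\bar g_{ik}}$ acts on $Z_i\,\mathfrak q(p-1,p)$: one gets (a) derivatives on the explicit $\mathbf h_i$/$\mathring{\mathbf h}_i$ vectors inside $Z_i$, (b) derivatives on the resolvents inside $Z_i$, and (c) derivatives on the $p+p-1$ resolvent factors hidden in $\mathfrak q(p-1,p)$, and one must show that after the algebraic cancellation in $Z_i$ every surviving term is either $O_\prec(\Psi^2)\mathfrak q(p-1,p)$ or one of the two lower-order monomials in~\eqref{030410}, with all error terms satisfying the enhanced $O_\prec$ bound~\eqref{032601}. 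Verifying the exact cancellation of the $\Psi$-order terms — which amounts to checking that the definition of $\Upsilon$ in~\eqref{definition of Upsilon} is precisely the combination that makes $\E[\frac1N\sum_i d_i Z_i \cdot(\text{stuff})]$ one order smaller — is the delicate step; it relies on the identities~\eqref{021701}, \eqref{030430}, \eqref{032001} and on carefully pairing the $S_i$, $T_i$ quantities of~\eqref{022920} using their sharp estimates~\eqref{022955}. The remaining estimates (Lipschitz continuity in $z$ for the lattice argument, the $\chi$-variable handling, Schwarz-type bounds to re-close the moments) are routine once this cancellation is in place.
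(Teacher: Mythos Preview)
Your plan is correct and matches the paper's approach closely: the paper writes $\E[\mathfrak{q}(p,p)]=\E[(\frac{1}{N}\sum_i d_iZ_i)\mathfrak{q}(p-1,p)]$, expands $(\widetilde BG)_{ii}=-\mathring{\mathbf h}_i^*\widetilde B^{\langle i\rangle}G\mathbf e_i+\mathbf h_i^*\widetilde B^{\langle i\rangle}\mathbf h_i\,G_{ii}+O_\prec(\Psi^2)$ via the $R_i$-decomposition, and then applies Gaussian integration by parts in the $\bar g_{ik}$'s (for $k\ne i$), exactly as you outline; the derivatives hitting $\mathfrak q(p-1,p)$ produce the $\Psi^4$ terms, and the crucial $\Psi$-level cancellation you anticipate occurs in equations~\eqref{021629}--\eqref{021631}, where the definition of $\Upsilon$ makes the two potentially dangerous $g_{ii}G_{ii}\,\ntr(\widetilde BG)$ terms cancel exactly. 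Two small points: the paper does \emph{not} expand $G$ around $G^{\langle i\rangle}$ for the main computation but instead differentiates $G$ directly via $\partial_{g_{ik}}R_i$ (formula~\eqref{021625}), using $G^{\langle i\rangle}$ only to freeze $\mathbf g_i$-independent tracial prefactors before integrating by parts; and the integration-by-parts identity is $\E[\bar g_{ik}X]=\frac1N\E[\partial_{g_{ik}}X]$ (you have the bars swapped).
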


\begin{proof}[Proof of Proposition~\ref{lem.021902}] According to~(\ref{0224101}), we see that $\mathbf{e}_i^*R_i=-\mathbf{h}_i^*$. Hence, 
using the decomposition~(\ref{decomposition}) with~(\ref{022930}) and recalling the notations defined in~(\ref{0911401}) and~(\ref{022920}), we have
\begin{align}
(\wt{B}G)_{ii}&=\mathbf{e}_i^*R_i\wt{B}^{\la i\ra} R_i G\mathbf{e}_i=-\mathbf{h}_i^*\wt{B}^{\la i\ra} R_iG\mathbf{e}_i\nonumber\\
&=-\mathbf{h}_i^* \widetilde{B}^{\langle i\rangle}\big( I-\ell_i^2\mathbf{e}_i\mathbf{e}_i^*-\ell_i^2\mathbf{h}_i\mathbf{e}_i^* -\ell_i^2\mathbf{e}_i\mathbf{h}_i^*-\ell_i^2\mathbf{h}_i\mathbf{h}_i^*\big) G\mathbf{e}_i\nonumber\\
&=-S_i+ \ell_i^2 \big(\mathbf{h}_i^*\wt{B}^{\la i\ra}\mathbf{e}_i+ \mathbf{h}_i^*\wt{B}^{\la i\ra}\mathbf{h}_i\big)\big(G_{ii}+T_i\big). \label{022110}
\end{align}
Now, recalling the definition of $\mathbf{h}_i$ in ~(\ref{some notation for r}) and using the large deviation inequalities in ~(\ref{091731}), we get 
\begin{align}
|\mathbf{h}_i^* \wt{B}^{\la i\ra}\mathbf{e}_i|\prec\frac{1}{\sqrt{N}},\qquad\quad |\mathbf{h}_i^*\wt{B}^{\la i\ra}\mathbf{h}_i|\prec\frac{1}{\sqrt{N}}, \label{021910}
\end{align}
where we also used the convention $\ntr \wt{B}^{\la i\ra}=\ntr B=0$ from~(\ref{022902}). According to Lemma~\ref{lem. bound for quadratic forms},~(\ref{entrywise estimate}) and Lemma~\ref{cor.080601}, 
we also have
\begin{align}
|T_i|\prec\Psi,\qquad\quad |G_{ii}|\prec 1. \label{030401}
\end{align}
In addition, by~(\ref{012001}), we have the elementary estimate
\begin{align}
\ell_i=1+O_\prec(\frac{1} {\sqrt{N}}). \label{estimate of l}
\end{align}
Now, using~(\ref{021910})-(\ref{estimate of l}) to bound several small terms in~(\ref{022110}), we obtain
\begin{align}
(\wt{B}G)_{ii}=-S_i+\mathbf{h}_i^*\wt{B}^{\la i\ra}\mathbf{e}_iG_{ii}+\mathbf{h}_i^*\wt{B}^{\la i\ra}\mathbf{h}_iG_{ii}+O_\prec(\Psi^2).\label{expansion of BG_ii}
\end{align}
Moreover, using the fact $\wt{B}^{\la i\ra}\mathbf{e}_i=b_i\mathbf{e}_i$,   we can write 
\begin{align}
(\wt{B}G)_{ii}=&-\sum_{k: k\neq i} \bar{h}_{ik} \mathbf{e}_k^*\wt{B}^{\la i\ra} G\mathbf{e}_i+\mathbf{h}_i^*\wt{B}^{\la i\ra}\mathbf{h}_i  G_{ii}+O_\prec(\Psi^2)\nonumber\\
=& -\mathring{\mathbf{h}}_i^*\wt{B}^{\la i\ra} G\mathbf{e}_i+\mathbf{h}_i^*\wt{B}^{\la i\ra}\mathbf{h}_i G_{ii}+O_\prec(\Psi^2), \label{021912}
\end{align}
where in the last step we used the notation introduced in~(\ref{021911}).

Recalling the definition of $Z_i$ in~(\ref{def of Z}), with~(\ref{021912}), we can write 
\begin{align}
\E\big[\mathfrak{q}(p,p)\big]=&\mathbb{E}\Big[\Big(\frac{1}{N}\sum_{i=1}^N d_iZ_i\Big) \mathfrak{q}(p-1,p)\Big]\nonumber\\
=&\mathbb{E}\Big[\Big(\frac{1}{N}\sum_{i=1}^N d_i(\wt{B}G)_{ii} \ntr G\Big) \mathfrak{q}(p-1,p)\Big]-\mathbb{E}\Big[\Big(\frac{1}{N}\sum_{i=1}^N d_iG_{ii} (\ntr \wt{B}G-\Upsilon)\Big) \mathfrak{q}(p-1,p)\Big]\nonumber\\
=&-\mathbb{E}\Big[\Big(\frac{1}{N}\sum_{i=1}^N d_i\mathring{\mathbf{h}}_i^*\wt{B}^{\la i\ra} G\mathbf{e}_i \ntr G\Big) \mathfrak{q}(p-1,p)\Big]-\mathbb{E}\Big[\Big(\frac{1}{N}\sum_{i=1}^N d_iG_{ii} (\ntr \wt{B}G-\Upsilon)\Big) \mathfrak{q}(p-1,p)\Big]\nonumber\\
&\qquad+\mathbb{E}\Big[\Big(\frac{1}{N}\sum_{i=1}^N d_i\mathbf{h}_i^*\wt{B}^{\la i\ra}\mathbf{h}_i G_{ii} \ntr G\Big) \mathfrak{q}(p-1,p)\Big]+\mathbb{E}\Big[O_\prec(\Psi^2)\mathfrak{q}(p-1,p)\Big]. \label{021601}
\end{align}
Next, we claim that the following lemma holds.
\begin{lem} \label{lem. 022301}Suppose that the assumptions in Theorem~\ref{thm.convergence rate} and~\eqref{le b equation} hold. Then, for any fixed integer $p\geq 2$, we have
\begin{align}
&\mathbb{E}\Big[\Big(\frac{1}{N}\sum_{i=1}^N d_i\mathring{\mathbf{h}}_i^*\wt{B}^{\la i\ra} G\mathbf{e}_i \ntr G\Big) \mathfrak{q}(p-1,p)\Big]+\mathbb{E}\Big[\Big(\frac{1}{N}\sum_{i=1}^N d_iG_{ii} (\ntr \wt{B}G-\Upsilon)\Big) \mathfrak{q}(p-1,p)\Big]\nonumber\\
&\qquad=\mathbb{E}\Big[O_\prec(\Psi^2)\mathfrak{q}(p-1,p)\Big]+\mathbb{E}\Big[O_\prec(\Psi^4)\mathfrak{q}(p-2,p)\Big]+\mathbb{E}\Big[O_\prec(\Psi^4)\mathfrak{q}(p-1,p-1)\Big]. \label{part 1}
\end{align}
Similarly, we have
\begin{align}
&\mathbb{E}\Big[\Big(\frac{1}{N}\sum_{i=1}^N d_i\mathbf{h}_i^*\wt{B}^{\la i\ra}\mathbf{h}_i G_{ii} \ntr G\Big) \mathfrak{q}(p-1,p)\Big]\nonumber\\
&\qquad=\mathbb{E}\Big[O_\prec(\Psi^2)\mathfrak{q}(p-1,p)\Big]+\mathbb{E}\Big[O_\prec(\Psi^4)\mathfrak{q}(p-2,p)\Big]+\mathbb{E}\Big[O_\prec(\Psi^4)\mathfrak{q}(p-1,p-1)\Big]. \label{part 2}
\end{align}
\end{lem}
The proof of Lemma~\ref{lem. 022301} will be postponed. Combining ~(\ref{021601}),~(\ref{part 1}) and~(\ref{part 2}), we can conclude the proof of Proposition~\ref{lem.021902}.
\end{proof}

With Proposition~\ref{lem.021902}, we can prove Lemma~\ref{pro. estimate of high order moment}.
\begin{proof}[Proof of Lemma~\ref{pro. estimate of high order moment}]
Fix any (small) $\epsilon>0$. Then applying Young's inequality to~\eqref{030410} we~get
\begin{align}
 \E\big[\mathfrak{q}(p,p)\big]\le 3\frac{1}{2p}\E\big[O_\prec(N^{2 p\epsilon}\Psi^{4p})\big]+3\frac{2p-1}{2p} N^{-\frac{2p\epsilon}{2p-1}}\E\big[\mathfrak{q}(p,p)\big].
\end{align}
Hence absorbing the second term on the right side into the left side and recalling~(\ref{032601}) we get
\begin{align}
 \E\big[\mathfrak{q}(p,p)\big]\prec\Psi^{4p},
\end{align}
uniformly on $\mathcal{S}_{\mathcal{I}}(\eta_{\mathrm{m}},1)$, since $\epsilon>0$ was arbitrary.
\end{proof}

In the rest of this section, we prove Lemma~\ref{lem. 022301}.
\begin{proof}[Proof of Lemma~\ref{lem. 022301}] We use integration by parts for the Gaussian variables:  regarding $g$ and $\bar{g}$ as independent variables for computing $\partial_g f(g, \bar{g})$, we have
\begin{align}
\int_{\mathbb{C}} \bar{g} f(g, \bar{g})\, \mathrm{e}^{-\frac{|g|^2}{\sigma^2}} {\rm d} g \wedge {\rm d} \bar{g} =\sigma^2 \int_{\mathbb{C}} \partial_g f(g, \bar{g})\, \mathrm{e}^{-\frac{|g|^2}{\sigma^2}} {\rm d} g\wedge {\rm d} \bar{g}, \label{030420}
\end{align}
for differentiable functions $f: \mathbb{C}^2\to \mathbb{C}$.

 Let us start with~(\ref{part 1}). First, we can get rid of the $\mathbf{g}_i$-dependence of the factor $\ntr G$, namely, 
\begin{align}\label{smakaka}
\mathbb{E}\Big[\Big(\frac{1}{N}\sum_{i=1}^N d_i\mathring{\mathbf{h}}_i^*\wt{B}^{\la i\ra} G\mathbf{e}_i \ntr G\Big) \mathfrak{q}(p-1,p)\Big]=&\mathbb{E}\Big[\Big(\frac{1}{N}\sum_{i=1}^N d_i\mathring{\mathbf{h}}_i^*\wt{B}^{\la i\ra} G\mathbf{e}_i \ntr G^{\la i\ra}\Big) \mathfrak{q}(p-1,p)\Big]\nonumber\\
&\qquad+\mathbb{E}\Big[O_\prec(\Psi^2)\mathfrak{q}(p-1,p)\Big],
\end{align}
where we used the finite rank perturbation estimate in~(\ref{091002}) and
\begin{align}
|\mathring{\mathbf{h}}_i^*\wt{B}^{\la i\ra} G\mathbf{e}_i|\prec 1, \qquad\quad \forall i\in \llbracket 1, N\rrbracket,\label{030460}
\end{align}
 which follows from $\mathring{\mathbf{h}}_i^*\wt{B}^{\la i\ra} G\mathbf{e}_i=S_i-b_ih_{ii}G_{ii}$ and the bounds in Lemma~\ref{lem. bound for quadratic forms}.
Further, for brevity, we~let
\begin{align}
d_{i,1}\equiv d_{i,1}(z)\deq d_i \ntr G^{\la i\ra}. \label{022601}
\end{align}
Recalling the definition in~(\ref{021911}) and using the integration by parts formula~(\ref{030420}) for the Gaussian variables $\bar{g}_{ik}$, $i\not=k$, we get
\begin{align}
&\mkern-18mu\mathbb{E}\Big[\Big(\frac{1}{N}\sum_{i=1}^N d_{i,1}\mathring{\mathbf{h}}_i^*\wt{B}^{\la i\ra} G\mathbf{e}_i \Big) \mathfrak{q}(p-1,p)\Big] \nonumber\\
&=  \mathbb{E}\Big[\Big(\frac{1}{N^2}\sum_{i=1}^N d_{i,1} \frac{1}{\|\mathbf{g}_i\|_2}\sum_{k:k\neq i} \frac{\partial (\mathbf{e}_k^*\wt{B}^{\la i\ra} G\mathbf{e}_i)}{\partial g_{ik}} \Big) \mathfrak{q}(p-1,p)\Big]\nonumber\\
&\qquad+\mathbb{E}\Big[\Big(\frac{1}{N^2}\sum_{i=1}^N d_{i,1}\sum_{k:k\neq i} \frac{\partial \|\mathbf{g}_i\|_2^{-1}}{\partial g_{ik}}  (\mathbf{e}_k^*\wt{B}^{\la i\ra} G\mathbf{e}_i)\Big) \mathfrak{q}(p-1,p)\Big]\nonumber\\
&\qquad+\mathbb{E}\Big[\Big(\frac{p-1}{N^3}\sum_{i=1}^N d_{i,1} \frac{1}{\|\mathbf{g}_i\|_2}\sum_{k:k\neq i} \mathbf{e}_k^*\wt{B}^{\la i\ra} G\mathbf{e}_i \sum_{j=1}^N d_j\frac{\partial Z_j}{\partial g_{ik}}\Big)\mathfrak{q}(p-2,p)\Big]\nonumber\\
& \qquad+\mathbb{E}\Big[\Big(\frac{p}{N^3}\sum_{i=1}^N d_{i,1} \frac{1}{\|\mathbf{g}_i\|_2}\sum_{k:k\neq i} \mathbf{e}_k^*\wt{B}^{\la i\ra} G\mathbf{e}_i  \sum_{j=1}^N \overline{d_j}\frac{\partial \overline{Z_j}}{\partial g_{ik}}\Big)\mathfrak{q}(p-1,p-1)\Big]. \label{021610}
\end{align}
Using~(\ref{decomposition}) and~(\ref{022930}), it is elementary to compute
\begin{align}
\frac{\partial R_i}{\partial g_{ik}}&= -\frac{\ell_i^2}{\|\mathbf{g}_i\|_2} \mathbf{e}_k (\mathbf{e}_i+\mathbf{h}_i)^*+\frac{\ell_i^2}{2\|\mathbf{g}_i\|_2^2} \bar{g}_{ik} \big(\mathbf{e}_i\mathbf{h}_i^*+\mathbf{h}_i\mathbf{e}_i^*+2\mathbf{h}_i\mathbf{h}_i^*\big)-\frac{\ell_i^4}{2\|\mathbf{g}_i\|_2^3} g_{ii}\bar{g}_{ik}(\mathbf{e}_i+\mathbf{h}_i)(\mathbf{e}_i+\mathbf{h}_i)^*\nonumber\\
&=:  -\frac{\ell_i^2}{\|\mathbf{g}_i\|_2} \mathbf{e}_k (\mathbf{e}_i+\mathbf{h}_i)^*+\Delta_R(i,k),\label{021625}
\end{align}
where we introduced
\begin{align}
\Delta_R(i,k)\deq \frac{\ell_i^2}{2\|\mathbf{g}_i\|_2^2} \bar{g}_{ik} \big(\mathbf{e}_i\mathbf{h}_i^*+\mathbf{h}_i\mathbf{e}_i^*+2\mathbf{h}_i\mathbf{h}_i^*\big)-\frac{\ell_i^4}{2\|\mathbf{g}_i\|_2^3} g_{ii}\bar{g}_{ik}(\mathbf{e}_i+\mathbf{h}_i)(\mathbf{e}_i+\mathbf{h}_i)^*. \label{0223300}
\end{align}
The $\Delta_R(i,k)$'s are irrelevant  error terms. Their estimates will be easy and kept separate in Appendix~\ref{a.A}. We focus on the other terms in the sequel.  
For convenience, we introduce
\begin{align}
c_i\deq \frac{\ell_i^2}{\|\mathbf{g}_i\|_2}=\frac{1}{\|\mathbf{g}_i\|_2}-g_{ii}+O_\prec(\frac{1}{N})=\|\mathbf{g}_i\|_2-g_{ii}-(\|\mathbf{g}_i\|_2^2-1)+O_\prec(\frac{1}{N}), \label{022140}
\end{align}
where the last step follows from~(\ref{012001}).
Using~(\ref{021625}), we have 
\begin{align}
\frac{\partial G}{\partial g_{ik}}&=-G\frac{\partial \wt{B}}{\partial g_{ik}} G=-G\frac{\partial R_i}{\partial g_{ik}} \wt{B}^{\la i\ra} R_i G-G R_i \wt{B}^{\la i\ra} \frac{\partial R_i}{\partial g_{ik}} G\nonumber\\
&=:c_i\Big[G\mathbf{e}_k (\mathbf{e}_i+\mathbf{h}_i)^*\wt{B}^{\la i\ra} R_i G+GR_i\wt{B}^{\la i\ra}\mathbf{e}_k (\mathbf{e}_i+\mathbf{h}_i)^*G\Big]+ \Delta_G(i,k),\label{022240}
\end{align}
where we set
\begin{align}
\Delta_G(i,k)\deq -G\Delta_R(i,k) \wt{B}^{\la i\ra} R_i G-G R_i \wt{B}^{\la i\ra} \Delta_R(i,k) G. \label{022360}
\end{align}
Hence, applying~(\ref{022240}), we obtain, for any $i\in\llbracket 1,N\rrbracket$,
\begin{align}
\frac{1}{N}\sum_{k:k\neq i} \frac{\partial (\mathbf{e}_k^*\wt{B}^{\la i\ra} G\mathbf{e}_i)}{\partial g_{ik}}&=\frac{1}{N}\sum_{k:k\neq i} \mathbf{e}_k^*\wt{B}^{\la i\ra}\frac{\partial  G}{\partial g_{ik}}\mathbf{e}_i\nonumber\\
&= c_i \Big[\ntr (\wt{B}^{\la i\ra} G) (\mathbf{e}_i+\mathbf{h}_i)^* \wt{B}^{\la i\ra} R_i G\mathbf{e}_i+ \ntr \big(\wt{B}^{\la i\ra} G R_i \wt{B}^{\la i\ra}\big) (\mathbf{e}_i+\mathbf{h}_i)^*G\mathbf{e}_i\Big]\nonumber\\
&\qquad-c_i \frac{1}{N}\Big[\mathbf{e}_i^*\wt{B}^{\la i\ra}G\mathbf{e}_i (\mathbf{e}_i+\mathbf{h}_i)^*\wt{B}^{\la i\ra} R_i G\mathbf{e}_i+\mathbf{e}_i^*\wt{B}^{\la i\ra}GR_i\wt{B}^{\la i\ra}\mathbf{e}_i (\mathbf{e}_i+\mathbf{h}_i)^*G\mathbf{e}_i\Big]\nonumber\\
&\qquad+\frac{1}{N}\sum_{k:k\neq i} \mathbf{e}_k^*\wt{B}^{\la i\ra} \Delta_G(i,k)\mathbf{e}_i\nonumber\\
&=c_i \Big[-\ntr (\wt{B}^{\la i\ra} G) \big(b_iT_i+(\wt{B}G)_{ii}\big)+ \ntr \big(\wt{B}^{\la i\ra} G R_i \wt{B}^{\la i\ra}\big) \big(G_{ii}+T_i\big)\Big]\nonumber\\
&\qquad+\frac{1}{N}\sum_{k:k\neq i} \mathbf{e}_k^*\wt{B}^{\la i\ra} \Delta_G(i,k)\mathbf{e}_i+O_\prec(\Psi^2), \label{022950}
\end{align}
where in the last step we used~(\ref{030430}) and thus
\begin{align}
\hspace{-2ex}\mathbf{e}_i^*\wt{B}^{\la i\ra} R_i G\mathbf{e}_i=-b_iT_i,\quad \mathbf{h}_i^*\wt{B}^{\la i\ra} R_i G\mathbf{e}_i=-(\wt{B}G)_{ii},\quad \mathbf{e}_i^*\wt{B}^{\la i\ra}GR_i\wt{B}^{\la i\ra}\mathbf{e}_i=-b_i^2\mathbf{e}_i^*G\mathbf{h}_i, \label{022650}
\end{align}
whose bounds can be obtained from Lemma~\ref{lem. bound for quadratic forms} and the identity $(\wt{B}G)_{ii}=1-(a_i-z)G_{ii}$.

For the second term of the right side of~(\ref{022950}), we use the next lemma, which is proved in Appendix~\ref{a.A}.
\begin{lem} \label{lem.022201} Suppose that the assumptions in Theorem~\ref{thm.convergence rate} and~\eqref{le b equation} hold, we have
\begin{align}
\frac{1}{N}\sum_{k:k\neq i} \mathbf{e}_k^*\wt{B}^{\la i\ra} \Delta_G(i,k)\mathbf{e}_i=O_\prec(\Psi^2). \label{022410}
\end{align}
\end{lem}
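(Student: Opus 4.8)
The plan is to exploit the special structure of the matrices $\Delta_R(i,k)$ defined in~\eqref{0223300}: every term there is proportional to $\bar{g}_{ik}$, while the accompanying matrix factor does not depend on the summation index $k$. Writing $\Delta_R(i,k)=\bar{g}_{ik}M_i$ with
\begin{align*}
M_i\deq \frac{\ell_i^2}{2\|\mathbf{g}_i\|_2^2}\big(\mathbf{e}_i\mathbf{h}_i^*+\mathbf{h}_i\mathbf{e}_i^*+2\mathbf{h}_i\mathbf{h}_i^*\big)-\frac{\ell_i^4}{2\|\mathbf{g}_i\|_2^3}g_{ii}(\mathbf{e}_i+\mathbf{h}_i)(\mathbf{e}_i+\mathbf{h}_i)^*,
\end{align*}
one sees from~\eqref{012001} that $M_i$ is an $O_\prec(1)$ linear combination of the rank-one matrices $\mathbf{u}\mathbf{v}^*$ with $\mathbf{u},\mathbf{v}\in\{\mathbf{e}_i,\mathbf{h}_i\}$ (the coefficient of the last term is in fact $O_\prec(N^{-1/2})$, but this is not needed). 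Substituting into~\eqref{022360} then gives $\Delta_G(i,k)=\bar{g}_{ik}N_i$ with $N_i\deq -GM_i\wt{B}^{\la i\ra}R_iG-GR_i\wt{B}^{\la i\ra}M_iG$ again independent of $k$, so, since $\sum_{k:k\neq i}\bar{g}_{ik}\mathbf{e}_k^*=\mathring{\mathbf{g}}_i^*$, the left-hand side of~\eqref{022410} collapses to
\begin{align*}
\frac{1}{N}\sum_{k:k\neq i}\mathbf{e}_k^*\wt{B}^{\la i\ra}\Delta_G(i,k)\mathbf{e}_i=\frac{1}{N}\,\mathring{\mathbf{g}}_i^*\wt{B}^{\la i\ra}N_i\mathbf{e}_i .
\end{align*}

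Next I would bound the right-hand side. Since $\|\mathring{\mathbf{g}}_i\|_2^2=\|\mathbf{g}_i\|_2^2-|g_{ii}|^2\prec 1$ and $\|\wt{B}^{\la i\ra}\|=\|B\|\le C$, it suffices to show $\|N_i\mathbf{e}_i\|_2\prec\eta^{-1}$, which together with the prefactor $N^{-1}$ yields exactly $\Psi^2$. To control $\|N_i\mathbf{e}_i\|_2$ I would use the rank-one decomposition of $M_i$ to contract its bra-vectors against the neighbouring factors: in the first term, the scalars $\mathbf{h}_i^*\wt{B}^{\la i\ra}R_iG\mathbf{e}_i=-(\wt{B}G)_{ii}$ and $\mathbf{e}_i^*\wt{B}^{\la i\ra}R_iG\mathbf{e}_i=-b_iT_i$ from~\eqref{022650} are $O_\prec(1)$ by Lemma~\ref{lem. bound for quadratic forms}, so $GM_i\wt{B}^{\la i\ra}R_iG\mathbf{e}_i$ is an $O_\prec(1)$ combination of $G\mathbf{e}_i$ and $G\mathbf{h}_i$; in the second term, $\mathbf{e}_i^*G\mathbf{e}_i=G_{ii}$ and $\mathbf{h}_i^*G\mathbf{e}_i=T_i$ are $O_\prec(1)$, so $M_iG\mathbf{e}_i$ is an $O_\prec(1)$ combination of $\mathbf{e}_i$ and $\mathbf{h}_i$, hence $R_i\wt{B}^{\la i\ra}M_iG\mathbf{e}_i$ has norm $\prec 1$ and $GR_i\wt{B}^{\la i\ra}M_iG\mathbf{e}_i$ has norm $\prec\eta^{-1}$. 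Finally $\|G\mathbf{e}_i\|_2^2=\eta^{-1}\im G_{ii}\prec\eta^{-1}$ by~\eqref{entrywise estimate}, and $\|G\mathbf{h}_i\|_2^2=\eta^{-1}\im(\mathbf{h}_i^*G\mathbf{h}_i)\prec\eta^{-1}$ by Lemma~\ref{lem. bound for quadratic forms}, giving $\|N_i\mathbf{e}_i\|_2\prec\eta^{-1}$ and hence~\eqref{022410}, uniformly in $i\in\llbracket 1,N\rrbracket$ and on $\mathcal{S}_{\mathcal{I}}(\eta_{\mathrm{m}},1)$. One can do slightly better in the second term: by~\eqref{030430}, $R_i\wt{B}^{\la i\ra}\mathbf{h}_i=-\wt{B}\mathbf{e}_i$ and $G\wt{B}\mathbf{e}_i=\mathbf{e}_i-(a_i-z)G\mathbf{e}_i$, so that term is in fact $O_\prec(\eta^{-1/2})$; but $\eta^{-1}$ already suffices.

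The step I expect to be the main obstacle is this second one: a purely crude estimate — bounding each of the two resolvents in $N_i$ by $\|G\|\le\eta^{-1}$ and $M_i$ by its operator norm — would give $\eta^{-3/2}$ and miss the target. The decisive gains are, first, the collapse of the $k$-summation into the single vector $\mathring{\mathbf{g}}_i$ of norm $O_\prec(1)$, which effectively replaces the naive $\sum_k|\bar{g}_{ik}|\sim N^{1/2}$ by $O_\prec(1)$; and second, preserving the rank-one form of $M_i$ so that contracting it against the adjacent resolvent factors produces only $O_\prec(1)$ scalars controlled by Lemma~\ref{lem. bound for quadratic forms}, while the remaining resolvent acts either on the structured vectors $\mathbf{e}_i,\mathbf{h}_i$ (where $\|G\mathbf{e}_i\|_2,\|G\mathbf{h}_i\|_2\prec\eta^{-1/2}$) or on a vector of norm $O_\prec(1)$ (costing $\eta^{-1}$). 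The terms arising from the $g_{ii}$-part of $M_i$ carry an extra factor $O_\prec(N^{-1/2})$ and are therefore harmless, so they need no separate treatment. Finally, the strengthened meaning of $O_\prec(\Psi^2)$ required by~\eqref{032601} is automatic here, since the quantity is a finite product of a function bounded by $C\eta^{-1}$ with quadratic forms in Gaussian or deterministically bounded vectors, exactly as discussed right after~\eqref{032601}.
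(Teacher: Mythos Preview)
Your proof is correct and follows essentially the same approach as the paper: both factor $\Delta_G(i,k)=\bar g_{ik}N_i$ with $N_i$ independent of $k$, collapse the $k$-sum via $\sum_{k\neq i}\bar g_{ik}\mathbf{e}_k^*=\mathring{\mathbf{g}}_i^*$, and then split at the rank-one factors of $M_i$ to invoke the quadratic-form bounds of Lemma~\ref{lem. bound for quadratic forms}. The only difference is that the paper also rewrites $\mathring{\mathbf{g}}_i^*=\|\mathbf{g}_i\|_2\mathbf{h}_i^*-g_{ii}\mathbf{e}_i^*$ and applies~\eqref{030321} on \emph{both} sides of the rank-one factor, obtaining the sharper bound $O_\prec(N^{-1})$; your Cauchy--Schwarz step with $\|N_i\mathbf{e}_i\|_2\prec\eta^{-1}$ loses a factor of~$\eta$ but still lands exactly on the stated $O_\prec(\Psi^2)$.
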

With the aid of  Lemma~\ref{lem.022201}, we get from~(\ref{022950}) that
\begin{align}
\frac{1}{N}\sum_{k:k\neq i} \frac{\partial (\mathbf{e}_k^*\wt{B}^{\la i\ra} G\mathbf{e}_i)}{\partial g_{ik}}&= c_i \Big[ \ntr \big(\wt{B}^{\la i\ra} G R_i \wt{B}^{\la i\ra}\big) \big(G_{ii}+T_i\big)-\ntr (\wt{B}^{\la i\ra} G) \big(b_iT_i+(\wt{B}G)_{ii}\big)\Big]+O_\prec(\Psi^2)\nonumber\\
&=c_i \Big[ \ntr \big(\wt{B} G\wt{B}\big) \big(G_{ii}+T_i\big)-\ntr (\wt{B} G) \big(b_iT_i+(\wt{B}G)_{ii}\big)\Big]+O_\prec(\Psi^2),\label{021626}
\end{align}
where in the last step we used the estimates in~(\ref{030401}) and the facts that the differences $\ntr (\wt{B}G)- \ntr (\wt{B}^{\la i\ra} G)$ and $\ntr \big(\wt{B} G \wt{B}\big)- \ntr \wt{B}^{\la i\ra} G R_i\wt{B}^{\la i\ra}$ can be written as the linear combination of the terms of the form $\frac{1}{N} \mathbf{r}_i^*Q_iGQ_i'\mathbf{r}_i$ for $Q_i,Q_i'=I$ or $\wt{B}^{\la i\ra}$, which implies according to~(\ref{030321}) that
\begin{align*}
\ntr (\wt{B}G)- \ntr (\wt{B}^{\la i\ra} G)=O_\prec(\frac{1}{N}), \qquad \ntr \big(\wt{B} G \wt{B}\big)- \ntr \wt{B}^{\la i\ra} G R_i\wt{B}^{\la i\ra}=O_\prec (\frac{1}{N}).
 \end{align*} 
Analogously to~(\ref{021626}), we can get
\begin{align}
\frac{1}{N}\sum_{k:k\neq i} \frac{\partial (\mathbf{e}_k^*G\mathbf{e}_i)}{\partial g_{ik}}
=& c_i \Big[\ntr \big(\wt{B} G\big) \big(G_{ii}+T_i\big)-(\ntr G) \big(b_iT_i+(\wt{B}G)_{ii}\big)\Big]+O_\prec(\Psi^2). \label{021627}
\end{align}

Combining~(\ref{021626}) and~(\ref{021627}), and recalling the definition of $\Upsilon$ in~(\ref{definition of Upsilon}),  we obtain
\begin{align}
&\frac{1}{N}\sum_{k:k\neq i} \frac{\partial (\mathbf{e}_k^*\wt{B}^{\la i\ra} G\mathbf{e}_i)}{\partial g_{ik}} \ntr G-\frac{1}{N}\sum_{k:k\neq i} \frac{\partial (\mathbf{e}_k^*G\mathbf{e}_i)}{\partial g_{ik}} \ntr (\wt{B}G)\nonumber\\
&\qquad=c_i (G_{ii}+T_i)\Big(\ntr G\ntr \big(\wt{B} G \wt{B}\big)-\big(\ntr \wt{B}G\big)^2\Big) +O_\prec(\Psi^2)\nonumber\\
&\qquad=-c_i (G_{ii}+T_i)(\ntr \wt{B}G-\Upsilon)+O_\prec(\Psi^2). \label{021629}
\end{align}
Now, we set
\begin{align}
\mathring{T}_i\deq \mathring{\mathbf{g}}_i^*G\mathbf{e}_i=\sum_{k:k\neq i} \bar{g}_{ik} \mathbf{e}_k^*G\mathbf{e}_i=T_i-g_{ii}G_{ii}+O_\prec(\Psi^2), \label{022030}
\end{align}
where in the last step we used the definition of $T_i$ in~(\ref{022920}), the bound $|T_i|\prec \Psi$ from~(\ref{022955}), and the estimate $\|\mathbf{g}_i^2\|^{-1}=1+O_\prec(N^{-1/2})$. 
Using ~(\ref{022140}) and~(\ref{022030}),  we rewrite~(\ref{021629}) as
\begin{align}
&\hspace{-8ex}\frac{1}{N}\sum_{k:k\neq i} \frac{\partial (\mathbf{e}_k^*\wt{B}^{\la i\ra} G\mathbf{e}_i)}{\partial g_{ik}} \ntr G\nonumber\\
=&-c_i (G_{ii}+T_i)(\ntr \wt{B}G-\Upsilon) +\mathring{T}_i\ntr (\wt{B}G)+\Big(\frac{1}{N}\sum_{k:k\neq i} \frac{\partial (\mathbf{e}_k^*G\mathbf{e}_i)}{\partial g_{ik}}-\mathring{T}_i\Big) \ntr (\wt{B}G)+O_\prec(\Psi^2)\nonumber\\
=&-\Big(\|\mathbf{g}_i\|_2-g_{ii}-(\|\mathbf{g}_i\|_2^2-1)\Big)(G_{ii}+T_i)(\ntr (\wt{B}G)-\Upsilon)+\big(T_i-g_{ii}G_{ii}\big)\ntr (\wt{B}G) \nonumber\\
&\qquad+\Big(\frac{1}{N}\sum_{k:k\neq i} \frac{\partial (\mathbf{e}_k^*G\mathbf{e}_i)}{\partial g_{ik}}-\mathring{T}_i\Big) \ntr (\wt{B}G)+O_\prec(\Psi^2)\nonumber\\
=&-\|\mathbf{g}_i\|_2G_{ii}(\ntr \wt{B}G-\Upsilon)+\Big(\frac{1}{N}\sum_{k:k\neq i} \frac{\partial (\mathbf{e}_k^*G\mathbf{e}_i)}{\partial g_{ik}}-\mathring{T}_i\Big) \ntr (\wt{B}G)\nonumber\\
&\qquad+\big(\|\mathbf{g}_i\|_2^2-1\big)G_{ii}\ntr (\wt{B}G)+O_\prec(\Psi^2), \label{021631}
\end{align}
where in the last step we used the bound $|T_i|\prec \Psi$, $|\Upsilon|\prec \Psi$ from~(\ref{022955}) and~(\ref{022904}), and $|g_{ii}|\prec N^{-1/2}$ and  $\|\mathbf{g}_i\|_2=1+O_\prec (N^{-1/2})$. Notice that the two potentially dangerous terms $g_{ii} G_{ii}\ntr (\wt{B}G)$ of order $N^{-1/2}$
cancel exactly.
Recalling from~(\ref{022601}) that $d_{i,1}=d_i \ntr G^{\la i\ra}=d_i \ntr G+O(\Psi^2)$, and using~(\ref{021631}),  we have
\begin{align}
\frac{1}{N^2}\sum_{i=1}^N d_{i,1} \frac{1}{\|\mathbf{g}_i\|_2}\sum_{k:k\neq i} \frac{\partial (\mathbf{e}_k^*\wt{B}^{\la i\ra} G\mathbf{e}_i)}{\partial g_{ik}} 
& =-\frac{1}{N}\sum_{i=1}^N d_iG_{ii} (\ntr \wt{B}G-\Upsilon) \nonumber\\
&\qquad+\frac{1}{N}\sum_{i=1}^N d_{i} \frac{1}{\|\mathbf{g}_i\|_2}\Big(\frac{1}{N}\sum_{k:k\neq i} \frac{\partial (\mathbf{e}_k^*G\mathbf{e}_i)}{\partial g_{ik}}-\mathring{T}_i\Big) \ntr (\wt{B}G) \nonumber\\
&\qquad +\frac{1}{N}\sum_{i=1}^N d_{i} \frac{\big(\|\mathbf{g}_i\|_2^2-1\big)}{\|\mathbf{g}_i\|_2}G_{ii}\ntr (\wt{B}G) +O_\prec(\Psi^2). \label{021650}
\end{align}
Substituting~(\ref{021650}) into~(\ref{021610}) and recalling~\eqref{smakaka}, we obtain 
\begin{align}
&\hspace{-8ex}\mathbb{E}\Big[\Big(\frac{1}{N}\sum_{i=1}^N d_i\mathring{\mathbf{h}}_i^*\wt{B}^{\la i\ra} G\mathbf{e}_i \ntr G\Big) \mathfrak{q}(p-1,p)\Big]+\mathbb{E}\Big[\Big(\frac{1}{N}\sum_{i=1}^N d_iG_{ii} (\ntr (\wt{B}G)-\Upsilon)\Big) \mathfrak{q}(p-1,p)\Big]\nonumber\\
&=\mathbb{E}\Big[\Big(\frac{1}{N}\sum_{i=1}^N d_{i} \frac{1}{\|\mathbf{g}_i\|_2}\Big(\frac{1}{N}\sum_{k:k\neq i} \frac{\partial (\mathbf{e}_k^*G\mathbf{e}_i)}{\partial g_{ik}}-\mathring{T}_i\Big) \ntr (\wt{B}G)\Big) \mathfrak{q}(p-1,p)\Big]\nonumber\\
& \qquad+\mathbb{E}\Big[\Big(\frac{1}{N}\sum_{i=1}^N d_{i} \frac{\big(\|\mathbf{g}_i\|_2^2-1\big)}{\|\mathbf{g}_i\|_2}G_{ii}\ntr (\wt{B}G) \mathfrak{q}(p-1,p)\Big]\nonumber\\
&\qquad+\mathbb{E}\Big[\Big(\frac{1}{N^2}\sum_{i=1}^N d_{i,1}\sum_{k:k\neq i}  \frac{\partial \|\mathbf{g}_i\|_2^{-1}}{\partial g_{ik}} (\mathbf{e}_k^*\wt{B}^{\la i\ra} G\mathbf{e}_i)\Big) \mathfrak{q}(p-1,p)\Big]\nonumber\\
&\qquad+\mathbb{E}\Big[\Big(\frac{p-1}{N^3}\sum_{i=1}^N d_{i,1} \frac{1}{\|\mathbf{g}_i\|_2}\sum_{k:k\neq i} \mathbf{e}_k^*\wt{B}^{\la i\ra} G\mathbf{e}_i \sum_{j=1}^N d_j\frac{\partial Z_j}{\partial g_{ik}}\Big)\mathfrak{q}(p-2,p)\Big]\nonumber\\
&\qquad +\mathbb{E}\Big[\Big(\frac{p}{N^3}\sum_{i=1}^N d_{i,1} \frac{1}{\|\mathbf{g}_i\|_2}\sum_{k:k\neq i} \mathbf{e}_k^*\wt{B}^{\la i\ra} G\mathbf{e}_i \sum_{j=1}^N \overline{d_j}\frac{\partial \overline{Z_j}}{\partial g_{ik}}\Big)\mathfrak{q}(p-1,p-1)\Big]\nonumber\\
&\qquad+\mathbb{E}\Big[O_\prec(\Psi^2)\mathfrak{q}(p-1,p)\Big]. \label{022035}
\end{align}
Hence, for~(\ref{part 1}), it suffices to estimate the right side of~(\ref{022035}). We start with the first term on the right side of~(\ref{022035}). First, by~(\ref{021627}), one can easily check that $\frac{1}{N}\sum_{k:k\neq i} \frac{\partial (\mathbf{e}_k^*G\mathbf{e}_i)}{\partial g_{ik}}=O_\prec(\Psi)$ for any $i$ from the estimate of $G_{ii}$'s and $T_i$'s (\cf~(\ref{entrywise estimate}) and~(\ref{022955})), and the first identity in~(\ref{021701}) that expresses $(\wt{B}G)_{ii}$ in terms of $G_{ii}$. In addition, we also have $\mathring{T}_i=O_\prec(\Psi)$ from~(\ref{022030}) and the estimate of $G_{ii}$ and $T_i$ (\cf~(\ref{entrywise estimate}) and~(\ref{022955})). These facts, together with $\|\mathbf{g}_i\|_2=1+O_\prec(\frac{1}{\sqrt{N}})$ and  the finite rank perturbation bound for the tracial quantities of Green function in Corollary~\ref{cor. finite perturbation}, we have

\begin{align}
&\mkern-18mu\mkern-18mu\mathbb{E}\Big[\Big(\frac{1}{N}\sum_{i=1}^N d_i \frac{1}{\|\mathbf{g}_i\|_2}\Big(\frac{1}{N}\sum_{k:k\neq i} \frac{\partial (\mathbf{e}_k^*G\mathbf{e}_i)}{\partial g_{ik}}-\mathring{T}_i\Big) \ntr (\wt{B}G) \Big) \mathfrak{q}(p-1,p)\Big]\nonumber\\
&=\mathbb{E}\Big[\Big(\frac{1}{N^2}\sum_{i=1}^N \sum_{k:k\neq i} \big(d_i\ntr (\wt{B}^{\la i\ra}G^{\la i\ra}) \big) \frac{\partial (\mathbf{e}_k^*G\mathbf{e}_i)}{\partial g_{ik}} \Big) \mathfrak{q}(p-1,p)\Big]\nonumber\\
&\qquad\qquad-\mathbb{E}\Big[\Big(\frac{1}{N}\sum_{i=1}^N\big(d_i \ntr (\wt{B}^{\la i\ra}G^{\la i\ra})\big)\mathring{T}_i\Big) \mathfrak{q}(p-1,p)\Big]+\mathbb{E}\big[O_\prec(\Psi^2)\mathfrak{q}(p-1,p)\big], \label{021660}
\end{align}
For brevity, for each $i\in \llbracket 1, N\rrbracket$, we set
\begin{align*}
d_{i,2}\equiv d_{i,2}(z)\deq d_i\ntr (\wt{B}^{\la i\ra}G^{\la i\ra})\, , 
\end{align*}
which is independent of $\mathbf{g}_i$. 
Recall the definition of $\mathring{T}_i$ in~(\ref{022030}). Using the integration by parts formula~(\ref{030420}) for the second term on the right side of~(\ref{021660}),  we have
\begin{align}
\mathbb{E}\Big[\Big(\frac{1}{N}\sum_{i=1}^Nd_{i,2} \mathring{T}_i  \Big) \mathfrak{q}(p-1,p)\Big]
&=\mathbb{E}\Big[\Big(\frac{1}{N}\sum_{i=1}^N\sum_{k:k\neq i} d_{i,2}\bar{g}_{ik} \mathbf{e}_k^*G\mathbf{e}_i \Big) \mathfrak{q}(p-1,p)\Big]\nonumber\\
&=\mathbb{E}\Big[\Big(\frac{1}{N^2}\sum_{i=1}^N\sum_{k:k\neq i} d_{i,2}\frac{\partial (\mathbf{e}_k^*G\mathbf{e}_i)}{\partial g_{ik}} \Big) \mathfrak{q}(p-1,p)\Big]\nonumber\\
&\qquad+\mathbb{E}\Big[\Big(\frac{p-1}{N^3}\sum_{i=1}^N\sum_{k:k\neq i} d_{i,2}\mathbf{e}_k^*G\mathbf{e}_i  \sum_{j=1}^N d_j\frac{\partial Z_j}{\partial g_{ik}}\Big) \mathfrak{q}(p-2,p)\Big]\nonumber\\
&\qquad+\mathbb{E}\Big[\Big(\frac{p}{N^3}\sum_{i=1}^N\sum_{k:k\neq i} d_{i,2}\mathbf{e}_k^*G\mathbf{e}_i  \sum_{j=1}^N \overline{d_j}\frac{\partial \overline{Z_j}}{\partial g_{ik}}\Big) \mathfrak{q}(p-1,p-1)\Big], \label{021750}
\end{align}
where the first term on the right side cancels the first term on the right side of~(\ref{021660}).  Hence, for~(\ref{part 1}), it suffices to estimate the second term to the fifth term of the right side of ~(\ref{022035}) and the last two terms of~(\ref{021750}).  
  Note that the fourth and fifth terms of the right side of ~(\ref{022035}) have a very similar form as the last two terms in~(\ref{021750}), respectively. 
 In addition, for the second term on the right side of~(\ref{022035}) we use
 \begin{align*}
\frac{\|\mathbf{g}_i\|_2^2-1}{\|\mathbf{g}_i\|_2}=\mathring{\mathbf{g}}_i^*\mathbf{g}_i-1+O_\prec\Big(\frac{1}{N}\Big).
 \end{align*}
Moreover, we can replace $\ntr \wt{B}G$ by $\ntr \wt{B}^{\la i\ra}G^{\la i\ra}$ in  the second term on the right side of~(\ref{022035}), up to an error $O_\prec(\Psi^2)$, according to Corollary~\ref{cor. finite perturbation}.  Let $Q_i=I$ or $\wt{B}^{\la i\ra}$. In addition, we use the notation $\mathring{Q}_i$ to denote the matrix obtained from $Q_i$ via replacing its $(i,i)$-th entry by zero. Choosing $Q_i=I$, we see that for the second term on the right side of~(\ref{022035}) is of the form 
 \begin{align}
 \mathbb{E}\Big[\Big(\frac{1}{N}\sum_{i=1}^N \wt{d}_i\big(\mathring{\mathbf{g}}_i^*Q_i \mathbf{g}_i-\ntr \mathring{Q}_i\big) G_{ii} \Big) \mathfrak{q}(p-1,p)\Big]+\mathbb{E}\Big[O_\prec(\Psi^2) \mathfrak{q}(p-1,p)\Big], \label{032201}
 \end{align}
 for some $\mathbf{g}_i$-independent quantities $\wt{d}_i\equiv \wt{d}_i(z)$ satisfying $|\wt{d}_i(z)|\prec 1$ uniformly on  $\mathcal{S}_{\mathcal{I}}(\eta_{\mathrm{m}},1)$ and in $i\in\llbracket1, N\rrbracket$. 
Now, using Lemma~\ref{lem.022302} below to estimate third term to the fifth term of the right side of~(\ref{022035}) and the last two terms of~(\ref{021750}), and using  Lemma ~\ref{lem.022303} below to estimate the second term on the right side of~(\ref{022035}), we can conclude the proof of ~(\ref{part 1}).

To prove~(\ref{part 2}), we use the approximation
\begin{align*}
\mathring{\mathbf{h}}_i^*\wt{B}^{\la i\ra}\mathbf{h}_i=\mathring{\mathbf{g}}_i^*\wt{B}^{\la i\ra}\mathbf{g}_i+O_\prec\Big(\frac{1}{N}\Big).
\end{align*}
Moreover, we can replace $\ntr G$ by $\ntr G^{\la i\ra}$ in the left side~(\ref{part 2}), up to any error $O_\prec(\Psi^2)$, according to Corollary~\ref{cor. finite perturbation}. Similarly, choosing $Q_i=\wt{B}^{\la i\ra}$, we see that  the left side of~(\ref{part 2}) is of the form~(\ref{032201}), in light of the fact $\ntr \wt{B}^{\la i\ra}=\ntr B=0$. Hence,~(\ref{part 2}) follows from Lemma ~\ref{lem.022303} below directly. This completes the proof of Lemma~\ref{lem. 022301}.
\end{proof}
It remains is to prove the following two lemmas.
\begin{lem}\label{lem.022302} Suppose that the assumptions in Theorem~\ref{thm.convergence rate} and~\eqref{le b equation} hold. Letting $\widehat{d}_1, \cdots, \widehat{d}_N\in \mathbb{C}$ be any possibly $z$-dependent random variables satisfying $\max_{i\in\llbracket 1, N\rrbracket}|\widehat{d}_i|\prec 1$ uniformly on $\mathcal{S}_{\mathcal{I}}(\eta_{\mathrm{m}},1)$, and letting $Q_i=I$ or $\wt{B}^{\la i\ra}$, we have the estimates
\begin{align}
&\frac{1}{N^2} \sum_{i=1}^N \sum_{k:k\neq i} \widehat{d}_i \frac{\partial \|\mathbf{g}_i\|_2^{-1}}{\partial g_{ik}} \mathbf{e}_k^* \wt{B}^{\la i\ra}G\mathbf{e}_i=O_\prec(\Psi^2),\label{022350}\\
&\frac{1}{N^3}\sum_{i=1}^N\sum_{k:k\neq i} \widehat{d}_i\mathbf{e}_k^* Q_iG\mathbf{e}_i \sum_{j=1}^N d_j\frac{\partial Z_j}{\partial g_{ik}}=O_\prec (\Psi^4),\label{022351}
\end{align}  
and the same estimates hold if we replace $d_j$ and $Z_j$ by their complex conjugates in~(\ref{022351}).
\end{lem}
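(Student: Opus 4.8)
The plan is to treat~\eqref{022350} and~\eqref{022351} separately; the first is elementary, while the second carries the bulk of the work. In both cases one first evaluates the $\partial/\partial g_{ik}$ derivatives by means of~\eqref{021625} and~\eqref{022240}--\eqref{022360}, then performs the sum over $k$ via $\sum_{k:k\neq i}\mathbf{e}_k\mathbf{e}_k^*=I-\mathbf{e}_i\mathbf{e}_i^*$, and finally estimates the collapsed expressions using the entrywise law~\eqref{entrywise estimate}, Lemma~\ref{lem. bound for quadratic forms}, and the Ward identity $\sum_k|G_{ak}(z)|^2=\eta^{-1}\im G_{aa}(z)$. For~\eqref{022350} one has $\partial\|\mathbf{g}_i\|_2^{-1}/\partial g_{ik}=-\tfrac12\|\mathbf{g}_i\|_2^{-3}\bar{g}_{ik}$ for $k\neq i$, so its left side equals $-\tfrac1{2N^2}\sum_i\widehat d_i\,\|\mathbf{g}_i\|_2^{-3}\,\mathring{\mathbf{g}}_i^*\wt B^{\la i\ra}G\mathbf{e}_i$; writing $\mathring{\mathbf{g}}_i^*\wt B^{\la i\ra}G\mathbf{e}_i=\|\mathbf{g}_i\|_2\,S_i-b_i\bar{g}_{ii}G_{ii}$ and using $|S_i|\prec1$ from~\eqref{022955}, $|G_{ii}|\prec1$ from~\eqref{entrywise estimate}, $|g_{ii}|\prec N^{-1/2}$, $\|\mathbf{g}_i\|_2^{\pm1}=1+O_\prec(N^{-1/2})$ and $|\widehat d_i|\prec1$, each summand is $O_\prec(1)$ uniformly in $i$, whence the total is $O_\prec(N^{-1})\prec\Psi^2$ on $\mathcal{S}_{\mathcal{I}}(\eta_{\mathrm{m}},1)$ because $\eta\le1$ there.

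For~\eqref{022351} I would first rewrite $\sum_j d_jZ_j=N\,W$, where, recalling~\eqref{def of Z} and~\eqref{definition of Upsilon},
\[
W\deq\ntr G\,\ntr(D\wt BG)-\big(\ntr(\wt BG)-\Upsilon\big)\ntr(DG),\qquad D\deq\mathrm{diag}(d_1,\ldots,d_N),
\]
so that the left side of~\eqref{022351} equals $\tfrac1{N^2}\sum_i\widehat d_i\sum_{k:k\neq i}(Q_iG)_{ki}\,\partial W/\partial g_{ik}$. Since $W$ is a fixed polynomial in the $O_\prec(1)$ macroscopic traces $\ntr G$, $\ntr(\wt BG)$, $\ntr(\wt BG\wt B)$, $\ntr(DG)$, $\ntr(D\wt BG)$, every term of $\partial W/\partial g_{ik}$ carries exactly one differentiated trace. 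Differentiating with~\eqref{022240} and with $\partial\wt B/\partial g_{ik}=-c_i\big[\mathbf{e}_k(\mathbf{e}_i+\mathbf{h}_i)^*\wt B^{\la i\ra}R_i+R_i\wt B^{\la i\ra}\mathbf{e}_k(\mathbf{e}_i+\mathbf{h}_i)^*\big]+\Delta_R(i,k)\wt B^{\la i\ra}R_i+R_i\wt B^{\la i\ra}\Delta_R(i,k)$, one gets $\partial W/\partial g_{ik}=\tfrac{c_i}{N}\boldsymbol{\psi}_i^*\mathbf{e}_k+\Delta_W(i,k)$, where $\boldsymbol{\psi}_i$ is a $k$--independent vector, a finite sum of terms of the shape $(\mathbf{e}_i+\mathbf{h}_i)^*P_0\,G\,M\,G\,P_1$ or $(\mathbf{e}_i+\mathbf{h}_i)^*P_0\,G\,P_1$ with $P_0\in\{I,\wt B^{\la i\ra}R_i\}$, $P_1$ a product of $R_i$, $\wt B^{\la i\ra}$, $D$, and $M\in\{I,D,\wt B,\wt B^2,D\wt B\}$, all of bounded operator norm, and $\Delta_W$ collects the $\Delta_R$--contributions. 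Carrying out $\sum_{k:k\neq i}\mathbf{e}_k\mathbf{e}_k^*=I-\mathbf{e}_i\mathbf{e}_i^*$ reduces the left side of~\eqref{022351}, up to the $\mathbf{e}_i\mathbf{e}_i^*$--correction and the $\Delta_W$--terms, to $\tfrac1{N^3}\sum_i\widehat d_i c_i\,(\mathbf{e}_i+\mathbf{h}_i)^*P_0\,G\,M\,G\,P_1Q_iG\mathbf{e}_i$, the one--resolvent shape of $\boldsymbol{\psi}_i$ and the correction being more favourable.

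The hard part is to bound this three--resolvent chain by $O_\prec(\eta^{-2})$ uniformly in $i$; the crude estimate $\|G\|\le\eta^{-1}$ applied three times (with one Ward identity) only gives $O(\eta^{-5/2})$, half a power short of what~\eqref{022351} demands. I would split the chain as $\big((\mathbf{e}_i+\mathbf{h}_i)^*P_0G\big)\cdot\big(MGP_1Q_iG\mathbf{e}_i\big)$ and bound by the product of $\ell^2$--norms. The right factor obeys $\|MGP_1Q_iG\mathbf{e}_i\|_2\le C\eta^{-1}\|G\mathbf{e}_i\|_2\prec\eta^{-3/2}$ by the Ward identity $\|G\mathbf{e}_i\|_2=(\eta^{-1}\im G_{ii})^{1/2}\prec\eta^{-1/2}$. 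For the left factor one uses $\|(\mathbf{e}_i+\mathbf{h}_i)^*P_0G\|_2^2=\eta^{-1}\im\big((\mathbf{e}_i+\mathbf{h}_i)^*P_0GP_0^*(\mathbf{e}_i+\mathbf{h}_i)\big)$ together with $(\mathbf{e}_i+\mathbf{h}_i)^*P_0GP_0^*(\mathbf{e}_i+\mathbf{h}_i)\prec1$: for $P_0=I$ this is immediate from~\eqref{entrywise estimate} and Lemma~\ref{lem. bound for quadratic forms} after expanding $\mathbf{e}_i+\mathbf{h}_i$, and for $P_0=\wt B^{\la i\ra}R_i$ one first writes $R_i=I-\mathbf{r}_i\mathbf{r}_i^*$ with $\mathbf{r}_i=\ell_i(\mathbf{e}_i+\mathbf{h}_i)$ to reduce $R_iGR_i$ to $G$ plus rank--one corrections, whereupon Lemma~\ref{lem. bound for quadratic forms} applies again. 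This gives $\|(\mathbf{e}_i+\mathbf{h}_i)^*P_0G\|_2\prec\eta^{-1/2}$, hence the chain is $O_\prec(\eta^{-2})$, and summing over $i$ against $|\widehat d_i|,|c_i|\prec1$ with the prefactor $N^{-3}$ produces $O_\prec(N^{-2}\eta^{-2})=O_\prec(\Psi^4)$; the $\mathbf{e}_i\mathbf{e}_i^*$--correction is $O_\prec(\eta^{-1})$ (two Ward identities) and thus harmless.

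It remains to handle the $\Delta_W$--terms: each $\Delta_R(i,k)$ carries an extra factor $\bar{g}_{ik}=O_\prec(N^{-1/2})$ (see~\eqref{0223300}), which more than compensates for the lost cancellation, and these are estimated exactly as in the proof of Lemma~\ref{lem.022201}; I would relegate them to Appendix~\ref{a.A}. The complex--conjugate version of~\eqref{022351} is obtained verbatim by differentiating in $\bar{g}_{ik}$ in place of $g_{ik}$, which leads to the identical algebraic structure.
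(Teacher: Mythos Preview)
Your treatment of~\eqref{022350} is exactly the paper's argument.

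For~\eqref{022351} your approach is correct but differs from the paper's in the key step. After collapsing the $k$--sum, the paper does \emph{not} bound the three--resolvent chain
\[
\frac{1}{N^3}\sum_{i} y_i\,\boldsymbol{\alpha}_i^*\big(\star\,G\star G\star G\big)\mathbf{e}_i
\]
termwise in $i$. Instead it exploits the column structure $\mathbf{h}_i=\e{-\ii\theta_i}U\mathbf{e}_i$ to sum over $i$ first, via the identities $\sum_i y_i\mathbf{e}_i\mathbf{e}_i^*=Y$, $\sum_i y_i\mathbf{h}_i\mathbf{h}_i^*=UYU^*$, $\sum_i y_i\mathbf{h}_i\mathbf{e}_i^*=UY\Theta^*$, $\sum_i y_i\mathbf{e}_i\mathbf{h}_i^*=Y\Theta U^*$ (with $Y=\mathrm{diag}(y_i)$, $\Theta=\mathrm{diag}(\e{\ii\theta_i})$). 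This turns the sum into $N^{-2}\ntr(\star G\star G\star G Y)$, which is then bounded by $O_\prec\big(N^{-2}\eta^{-1}\ntr|G|^2\big)=O_\prec(\Psi^4)$ using only $\|G\|\le\eta^{-1}$ and $\ntr|G|^2=\eta^{-1}\im\ntr G\prec\eta^{-1}$.

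Your route bounds each $i$--summand by $O_\prec(\eta^{-2})$ via a Cauchy--Schwarz split of the chain and a Ward identity on \emph{both} ends, where the nontrivial input is that the endpoint quadratic forms $(\mathbf{e}_i+\mathbf{h}_i)^*P_0GP_0^*(\mathbf{e}_i+\mathbf{h}_i)$ are $O_\prec(1)$; this follows, as you indicate, from Lemma~\ref{lem. bound for quadratic forms} after unravelling $R_i$ (or, more directly, from~\eqref{030430} and~\eqref{021701}). This avoids the rank--one summation identities entirely and is arguably more elementary, at the cost of invoking the local--law input from Lemma~\ref{lem. bound for quadratic forms} one extra time. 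The paper's approach, by contrast, needs only the trivial bound $\im\ntr G\prec 1$ after the summation trick. Both yield the same $O_\prec(\Psi^4)$.
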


\begin{lem}\label{lem.022303}
Suppose that the assumptions in Theorem~\ref{thm.convergence rate} and~\eqref{le b equation} hold. Let $\wt{d}_1, \ldots, \wt{d}_N\in \mathbb{C}$ be any possibly $z$-dependent random variables satisfying $\max_{i\in \llbracket 1, N\rrbracket}|\wt{d}_i|\prec 1$ uniformly on $\mathcal{S}_{\mathcal{I}}(\eta_{\mathrm{m}},1)$. Assume that $\wt{d}_i$ is independent of $\mathbf{g}_i$ for each $i\in \llbracket1, N\rrbracket$. Let $Q_i=I$ or $\wt{B}^{\la i\ra}$. We have the estimate
\begin{align*}
 &\mathbb{E}\Big[\Big(\frac{1}{N}\sum_{i=1}^N \wt{d}_i\big(\mathring{\mathbf{g}}_i^*Q_i \mathbf{g}_i-\ntr \mathring{Q}_i\big) G_{ii} \Big) \mathfrak{q}(p-1,p)\Big]\nonumber\\
 &\qquad\qquad=\mathbb{E}\Big[O_\prec(\Psi^2)\mathfrak{q}(p-1,p)\Big]+\mathbb{E}\Big[O_\prec(\Psi^4)\mathfrak{q}(p-2,p)\Big]+\mathbb{E}\Big[O_\prec(\Psi^4)\mathfrak{q}(p-1,p-1)\Big].
 \end{align*}
\end{lem}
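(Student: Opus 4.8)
The plan is to prove Lemma~\ref{lem.022303} by Gaussian integration by parts in the off-diagonal variables $\bar g_{ik}$, $k\neq i$, following the strategy of the proof of Lemma~\ref{lem. 022301}. First I would write $\mathring{\mathbf g}_i^*Q_i\mathbf g_i=\sum_{k:k\neq i}\bar g_{ik}\,\mathbf e_k^*Q_i\mathbf g_i$ and, using that $\wt d_i$ is independent of $\mathbf g_i$, apply the integration by parts formula~\eqref{030420} with $\sigma^2=N^{-1}$ to each summand. This rewrites the left-hand side of the lemma as $\frac{1}{N^2}\sum_i\wt d_i\sum_{k:k\neq i}\mathbb{E}[\partial_{g_{ik}}(\mathbf e_k^*Q_i\mathbf g_i\, G_{ii}\, \mathfrak q(p-1,p))]$ minus the term carrying $\ntr\mathring Q_i$, and the Leibniz rule splits the derivative $\partial_{g_{ik}}$ into three groups according to whether it acts on $\mathbf e_k^*Q_i\mathbf g_i$, on $G_{ii}$, or on $\mathfrak q(p-1,p)$.

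The first group is the usual self-energy renewal: $\partial_{g_{ik}}(\mathbf e_k^*Q_i\mathbf g_i)=(Q_i)_{kk}$, so summing over $k\neq i$ yields $N\ntr\mathring Q_i$, which together with the prefactor $\frac{1}{N^2}$ exactly cancels the subtracted $\ntr\mathring Q_i$ term (valid for both $Q_i=I$ and $Q_i=\wt B^{\la i\ra}$). For the second group I would use the formula~\eqref{022240} for $\partial_{g_{ik}}G$, take its $(i,i)$ entry, and rewrite the occurring vectors $(\mathbf e_i+\mathbf h_i)^*\wt B^{\la i\ra}R_iG\mathbf e_i$ and $(\mathbf e_i+\mathbf h_i)^*G\mathbf e_i$ via~\eqref{030430} and~\eqref{022650}; the remaining sum $\sum_{k\neq i}(\mathbf e_k^*Q_i\mathbf g_i)(\partial_{g_{ik}}G_{ii})$ then collapses — up to $O_\prec(N^{-1/2})$ terms from the $k=i$ contribution, since $\mathbf e_i^*Q_i\mathbf g_i\prec N^{-1/2}$ — into a linear combination of quadratic forms $\mathbf e_i^*G\,Q_i\mathbf g_i$ and $\mathbf e_i^*GR_i\wt B^{\la i\ra}Q_i\mathbf g_i$, each $O_\prec(1)$ by Lemma~\ref{lem. bound for quadratic forms} (extended in the obvious way to products of bounded matrices), together with the $\Delta_G(i,k)$ error terms which are handled exactly as in Lemma~\ref{lem.022201}. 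Since the summand is thus $O_\prec(1)$ uniformly in $i$ and $c_i=O_\prec(1)$ by~\eqref{022140}, the factor $\frac{1}{N^2}\sum_i$ leaves a total of $O_\prec(N^{-1})\le O_\prec(\Psi^2)$, using $\Psi^2=(N\eta)^{-1}\ge N^{-1}$ on $\mathcal{S}_{\mathcal{I}}(\eta_{\mathrm{m}},1)$; this contributes to the $\mathbb{E}[O_\prec(\Psi^2)\mathfrak q(p-1,p)]$ term. Note that, in contrast to the computation leading to~\eqref{021631}, the extra $N^{-1}$ gained from the integration by parts makes any delicate algebraic cancellation of the $N^{-1/2}$-order terms unnecessary here.

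For the third group, applying the Leibniz rule to $(\tfrac{1}{N}\sum_j d_jZ_j)^{p-1}$ and $(\tfrac{1}{N}\sum_j\overline{d_jZ_j})^{p}$ produces $(p-1)\,\mathfrak q(p-2,p)\,\tfrac{1}{N}\sum_j d_j\partial_{g_{ik}}Z_j+p\,\mathfrak q(p-1,p-1)\,\tfrac{1}{N}\sum_j\overline{d_j}\,\partial_{g_{ik}}\overline{Z_j}$, and, inserting this back, the factors $\mathfrak q(p-2,p)$ and $\mathfrak q(p-1,p-1)$ (independent of $i$ and $k$) pull out, leaving the brackets $\frac{1}{N^3}\sum_i\sum_{k\neq i}(\wt d_iG_{ii})\,\mathbf e_k^*Q_i\mathbf g_i\sum_j d_j\partial_{g_{ik}}Z_j$ and its conjugate. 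These are precisely of the form~\eqref{022351} of Lemma~\ref{lem.022302}, with the coefficient $\widehat d_i=\wt d_iG_{ii}$ (still $\prec 1$) and with the vector $G\mathbf e_i$ replaced by the $\ell^2$-shorter vector $\mathbf g_i$, $\|\mathbf g_i\|_2\prec 1\le\eta^{-1/2}$, which only improves the estimate; hence they are $O_\prec(\Psi^4)$. This yields the $\mathbb{E}[O_\prec(\Psi^4)\mathfrak q(p-2,p)]$ and $\mathbb{E}[O_\prec(\Psi^4)\mathfrak q(p-1,p-1)]$ contributions and completes the proof.

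The main obstacle is the bookkeeping in the second group: one has to control the $\Delta_G(i,k)$ and $\Delta_R(i,k)$ error terms, the $c_i=1+O_\prec(N^{-1/2})$ expansion from~\eqref{022140}, and the approximations $\mathbf h_i\leftrightarrow\mathbf g_i/\|\mathbf g_i\|_2$ and $g_{ii}\prec N^{-1/2}$ entering through $R_i$, and verify that all of them, after multiplication by the relevant $\mathbf g_i$-vectors and the averaging $\frac{1}{N^2}\sum_i$, stay $\lesssim\Psi^2$. These estimates are of the same nature as those already carried out for Lemma~\ref{lem.022201} and in~\cite{BES15b}, so I would relegate the routine parts to the appendix. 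A secondary point is to check that the moment bounds required by the convention~\eqref{032601} propagate; this is automatic here, since every random factor that appears is a finite product of quadratic forms in $\mathbf g_i$ times factors bounded deterministically by powers of $\eta^{-1}$.
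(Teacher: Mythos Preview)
Your proposal is correct and follows essentially the same approach as the paper: Gaussian integration by parts in $\bar g_{ik}$, cancellation of the $\ntr\mathring Q_i$ term by the $\partial_{g_{ik}}(\mathbf e_k^*Q_i\mathbf g_i)=(Q_i)_{kk}$ contribution, bounding the $\partial_{g_{ik}}G_{ii}$ term by $O_\prec(N^{-1})\le O_\prec(\Psi^2)$ using~\eqref{022240}, \eqref{022650}, Lemma~\ref{lem. bound for quadratic forms} and a $\Delta_G$-estimate (the paper states this as a separate Lemma~\ref{lem.022407}, proved in the appendix just as you suggest), and handling the $\partial_{g_{ik}}\mathfrak q$ terms via the analogue~\eqref{022651} of Lemma~\ref{lem.022302} with $G\mathbf e_i$ replaced by $\mathbf g_i$. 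The paper likewise omits the details of this last analogue, noting only that the missing $G$ plays no essential role.
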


\begin{proof}[Proof of Lemma~\ref{lem.022302}]
For~(\ref{022350}), we have
\begin{align*}
\frac{1}{N^2} \sum_{i=1}^N \sum_{k:k\neq i} \widehat{d}_i \frac{\partial \|\mathbf{g}_i\|_2^{-1}}{\partial g_{ik}} \mathbf{e}_k^* \wt{B}^{\la i\ra}G\mathbf{e}_i&=-\frac{1}{2N^2} \sum_{i=1}^N \sum_{k:k\neq i} \frac{\widehat{d}_i }{\|\mathbf{g}_i\|_2^3} \bar{g}_{ik} \mathbf{e}_k^* \wt{B}^{\la i\ra}G\mathbf{e}_i\nonumber\\
&=-\frac{1}{2N^2} \sum_{i=1}^N \frac{\widehat{d}_i }{\|\mathbf{g}_i\|_2^3}  \mathring{\mathbf{g}}_i^* \wt{B}^{\la i\ra}G\mathbf{e}_i=O_\prec(\frac{1}{N})=O_\prec(\Psi^2),
\end{align*}
where in the third step we used~(\ref{030460}).

In the sequel, we prove~(\ref{022351}). By the definition of $Z_j$ in~(\ref{def of Z})  and $\Upsilon$ in~(\ref{definition of Upsilon}), and the identities in~(\ref{021701}), we can write
\begin{align*}
Z_j=&(\wt{B}G)_{jj}\ntr G-G_{jj}(\ntr (\wt{B}G)-\Upsilon)=(\wt{B}G)_{jj}\ntr G-G_{jj}\Big(\big(\ntr (\wt{B}G)\big)^2-\ntr G \ntr (\wt{B}G\wt{B})\Big)\nonumber\\
=& \ntr G-G_{jj}\bigg( \Big(1-\ntr \big((A-z) G\big)\Big)^2-\ntr G\Big( \ntr \big((A-z)^2 G\big)-a_j+2z\Big)\bigg)
\end{align*}
Hence, we have
\begin{align}
\frac{\partial Z_j}{\partial g_{ik}}=&\ntr \Big(\frac{\partial G}{\partial g_{ik}}\Big)-\mathbf{e}_j^*\frac{\partial G}{\partial g_{ik}}\mathbf{e}_j\big( \mathcal{A}_1+a_j\ntr G\big)+\mathcal{A}_2G_{jj}\ntr \Big((A-z)\frac{\partial G}{\partial g_{ik}}\Big)\nonumber\\
&+G_{jj}\ntr \Big(\frac{\partial G}{\partial g_{ik}}\Big)\big( \mathcal{A}_3-a_j\big)+G_{jj} \ntr G \ntr\Big((A-z)^2\frac{\partial G}{\partial g_{ik}}\Big), \label{022245}
\end{align}
where we introduced the shorthand notations 
\begin{align*}
\mathcal{A}_1\equiv \mathcal{A}_1(z)&\deq \big(1-\ntr \big(A-z) G\big)\big)^2-\ntr G\big( \ntr \big((A-z)^2 G\big)+2z\big),\nonumber\\
\mathcal{A}_2\equiv \mathcal{A}_2(z)&\deq 2\big(1-\ntr \big((A-z) G\big)\big),\nonumber\\  
\mathcal{A}_3\equiv \mathcal{A}_3(z)&\deq \ntr \big((A-z)^2 G\big)+2z
\end{align*}
to denote some $O_\prec (1)$ tracial quantities whose explicit formulas are irrelevant for our analysis below.  
In addition, recalling the notation $\Delta_G(i,k)$ from~(\ref{022360}), we denote
\begin{align}
\Delta_{Z_j}(i,k)\deq &\ntr \big(\Delta_G(i,k)\big)-\mathbf{e}_j^*\Delta_G(i,k)\mathbf{e}_j\big(\mathcal{A}_1+a_j\ntr G\big)+\mathcal{A}_2G_{jj}\ntr \big((A-z)\Delta_G(i,k)\big)\nonumber\\
&+G_{jj}\ntr \big(\Delta_G(i,k)\big)\big( \mathcal{A}_3-a_j\big)+G_{jj} \ntr G \ntr\big((A-z)^2\Delta_G(i,k)\big). \label{022250}
\end{align}

For convenience, we introduce the matrix 
\begin{align}
D\deq \text{diag} (d_i), \label{022251}
\end{align}
and the shorthand notation
\begin{align}
\mathbf{w}_i\deq c_i(\mathbf{e}_i+\mathbf{h}_i), \label{022252}
\end{align}
where $c_i$ is defined in~(\ref{022140}).

Substituting~(\ref{022240}) into~(\ref{022245}) and using the notations defined in~(\ref{022250})-(\ref{022252}), we obtain
\begin{align}
\sum_{j=1}^N d_j\frac{\partial Z_j}{\partial g_{ik}}&=  \Big(\mathbf{w}_i^*\wt{B}^{\la i\ra} R_i G^2\mathbf{e}_k + \mathbf{w}_i^* G^2 R_i \wt{B}^{\la i\ra} \mathbf{e}_k\Big) \Big(\ntr D+\ntr \big(DG\big)\mathcal{A}_3-\ntr \big(ADG\big)\Big)\nonumber\\
&\qquad-\Big( \mathbf{w}_i^* \wt{B}^{\la i\ra} R_i GDG\mathbf{e}_k+\mathbf{w}_i^* GDGR_i\wt{B}^{\la i\ra} \mathbf{e}_k \Big) \mathcal{A}_1\nonumber\\
&\qquad-\Big(\mathbf{w}_i^* \wt{B}^{\la i\ra} R_i GADG\mathbf{e}_k +\mathbf{w}_i^* GADGR_i\wt{B}^{\la i\ra} \mathbf{e}_k \Big)\ntr G\nonumber\\
&\qquad+\Big( \mathbf{w}_i^*\wt{B}^{\la i\ra} R_i G(A-z)G\mathbf{e}_k+ \mathbf{w}_i^* G(A-z)G R_i \wt{B}^{\la i\ra} \mathbf{e}_k\Big)\ntr \big(DG\big)\mathcal{A}_2\nonumber\\
&\qquad+\Big( \mathbf{w}_i^*\wt{B}^{\la i\ra} R_i G(A-z)^2G\mathbf{e}_k+ \mathbf{w}_i^* G(A-z)^2G R_i \wt{B}^{\la i\ra} \mathbf{e}_k\Big)\ntr G \ntr \big(DG\big)\nonumber\\
&\qquad+\sum_{j=1}^N d_j \Delta_{Z_j}(i,k). \label{0217105}
\end{align}
Since $|G_{ii}|\prec 1$ for all $i\in \llbracket 1, N\rrbracket$ (\cf~(\ref{030321})), we have $|\ntr (Q G)|\prec 1$ for all diagonal matrix satisfying $\|Q\|\prec 1$. Therefore, except for the last term, all the other terms  in~(\ref{0217105}) are of the form
\begin{align*}
\widehat{d}\Big(\mathbf{w}_i^*\wt{B}^{\la i\ra} R_i GQG\mathbf{e}_k + \mathbf{w}_i^* GQG R_i \wt{B}^{\la i\ra} \mathbf{e}_k\Big)
\end{align*}
for some $z$-dependent quantity $\widehat{d}\equiv \widehat{d}(z)$ satisfying $|\widehat{d}|\prec 1$ uniformly on $\mathcal{S}_{\mathcal{I}}(\eta_{\mathrm{m}},1)$, and some diagonal matrix $Q$ with $\|Q\|\lesssim 1$, which can be $I$, $D$, $AD$, $A-z$ or $(A-z)^2$.  Hence, to establish~(\ref{022351}), it suffices to estimate 
\begin{align}
\frac{1}{N^3}\sum_{i=1}^N\sum_{k:k\neq i}\widehat{d}_i\mathbf{e}_k^* Q_iG\mathbf{e}_i \Big(\mathbf{w}_i^*\wt{B}^{\la i\ra} R_i GQG\mathbf{e}_k + \mathbf{w}_i^* GQG R_i \wt{B}^{\la i\ra} \mathbf{e}_k\Big) \label{0223100}
\end{align}
and 
\begin{align}
\frac{1}{N^3}\sum_{i=1}^N\sum_{k:k\neq i}\widehat{d}_i \mathbf{e}_k^* Q_iG\mathbf{e}_i \sum_{j=1}^N d_j \Delta_{Z_j}(i,k)\label{0223101}
\end{align}
for any possibly $z$-dependent random variables $\widehat{d}_1, \cdots, \widehat{d}_N\in \mathbb{C}$ which satisfy $\max_{i\in\llbracket 1, N\rrbracket}|\widehat{d}_i|\prec 1$ uniformly on $\mathcal{S}_{\mathcal{I}}(\eta_{\mathrm{m}},1)$.

The following lemma provides the bound on the quantity in~(\ref{0223101}).
\begin{lem} \label{lem.022405}Suppose that the assumptions in Theorem~\ref{thm.convergence rate} and~\eqref{le b equation} hold. Letting $\widehat{d}_1, \ldots, \widehat{d}_N\in \mathbb{C}$ be any possibly $z$-dependent random variables satisfying $\max_{i\in \llbracket 1, N\rrbracket}|\widehat{d}_i|\prec 1$ uniformly on $\mathcal{S}_{\mathcal{I}}(\eta_{\mathrm{m}},1)$, and letting $Q_i=I$ or $\wt{B}^{\la i\ra}$, we have 
\begin{align}
\frac{1}{N^3}\sum_{i=1}^N\sum_{k:k\neq i}\widehat{d}_i \mathbf{e}_k^* Q_iG\mathbf{e}_i \sum_{j=1}^N d_j \Delta_{Z_j}(i,k)=O_\prec(\Psi^4) \label{022461}
\end{align}
uniformly on $\mathcal{S}_{\mathcal{I}}(\eta_{\mathrm{m}},1)$.
\end{lem} 
The proof of Lemma~\ref{lem.022405} will also be postponed to Appendix~\ref{a.A}.

With Lemma~\ref{lem.022405}, it suffices to estimate~(\ref{0223100}) below. Note that 
\begin{align}
&\mkern-18mu\mkern-18mu\frac{1}{N^3}\sum_{i=1}^N\sum_{k:k\neq i}\widehat{d}_i\mathbf{e}_k^* Q_iG\mathbf{e}_i \Big(\mathbf{w}_i^*\wt{B}^{\la i\ra} R_i GQG\mathbf{e}_k + \mathbf{w}_i^* GQG R_i \wt{B}^{\la i\ra} \mathbf{e}_k\Big)\nonumber\\
&=\frac{1}{N^3}\sum_{i=1}^N\widehat{d}_i\Big(\mathbf{w}_i^*\wt{B}^{\la i\ra} R_i GQGQ_iG\mathbf{e}_i  + \mathbf{w}_i^* GQG R_i \wt{B}^{\la i\ra}Q_iG\mathbf{e}_i \Big)\nonumber\\
&\qquad-\frac{1}{N^3}\sum_{i=1}^N\widehat{d}_i\mathbf{e}_i^* Q_iG\mathbf{e}_i \Big(\mathbf{w}_i^*\wt{B}^{\la i\ra} R_i GQG\mathbf{e}_i + \mathbf{w}_i^* GQG R_i \wt{B}^{\la i\ra} \mathbf{e}_i\Big). \label{0217103}
\end{align}
Recall the fact that $Q_i=I$ or $\wt{B}^{\la i\ra}$. Now, using the facts $\wt{B}^{\la i\ra}=R_i \wt{B} R_i$ and $R_i^2=I$, we have the following relations
\begin{align}
&\wt{B}^{\la i\ra}=\wt{B} -\mathbf{r}_i\mathbf{r}_i^*\wt{B}-\wt{B} \mathbf{r}_i\mathbf{r}_i^*+\mathbf{r}_i\mathbf{r}_i^*\wt{B} \mathbf{r}_i\mathbf{r}_i^*,\qquad \wt{B}^{\la i\ra} R_i=\wt{B}-\mathbf{r}_i\mathbf{r}_i^*\wt{B},\nonumber\\
& R_i\wt{B}^{\la i\ra}=\wt{B}- \wt{B}\mathbf{r}_i\mathbf{r}_i^*,\qquad R_i(\wt{B}^{\la i\ra})^2=\wt{B}^2-\wt{B}^2\mathbf{r}_i\mathbf{r}_i^*,
\label{0223110}
\end{align}
\ie the $i$-dependence of these quantities are shifted to $\mathbf{r}_i$.
Recalling the notations $\mathbf{w}_i=c_i(\mathbf{e}_i+\mathbf{h}_i)$ and $\mathbf{r}_i=\ell_i(\mathbf{e}_i+\mathbf{h}_i)$, 
and using~(\ref{030430}) and~(\ref{0223110}) to~(\ref{0217103}) for either $Q_i=I$ or $\wt{B}^{\la i\ra}$, it is not difficult to check that the right side of~(\ref{0217103}) is the sum of terms in the form
\begin{align}
&\frac{1}{N^3}\sum_{i=1}^Ny_i \mathbf{e}_i^* (\star \; G \star G \star G)\mathbf{e}_i,& &\frac{1}{N^3}\sum_{i=1}^Ny_i \mathbf{h}_i^* (\star\; G\star G\star G)\mathbf{e}_i,\label{0223125}\\
& \frac{1}{N^3}\sum_{i=1}^Ny_i \mathbf{e}_i^* (\star\; G\star G\; \star )\boldsymbol{\alpha}_i\boldsymbol{\beta}_i^*Q_i'G\mathbf{e}_i,& &\frac{1}{N^3}\sum_{i=1}^Ny_i \mathbf{h}_i^* (\star\; G\star G\;\star )\boldsymbol{\alpha}_i\boldsymbol{\beta}_i^*Q_i'G\mathbf{e}_i, \label{0223120}
\end{align}
where $y_1, \ldots, y_N\in \mathbb{C}$ are some random variables (which can be different from line to line) satisfying $\max_{i\in \llbracket 1, N\rrbracket}|y_i|\prec 1$, and where each $\star$ either stands for one of the matrices  $I$, $A$, $A-z$,  $\wt{B}$, $D$ or the product of some of them (which can be different from one to another), but are all $i$-independent and their operator norms are $O_\prec(1)$. In addition, $\boldsymbol{\alpha}_i, \boldsymbol{\beta}_i=\mathbf{e}_i$ or $\mathbf{h}_i$ and $Q_i'=I$, $\wt{B}^{\la i\ra}$ or $\wt{B}$ in~(\ref{0223120}). 

Now, recall the fact that $\mathbf{v}_i$ is the $i$-th column of $U$, \ie $U\mathbf{e}_i=\mathbf{v}_i$, and $\mathbf{h}_i=\e{-\mathrm{i}\theta_i}\mathbf{v}_i$ from~(\ref{some notation for r}). Therefore we have the following identities: for any diagonal matrix $Y\deq \text{diag}(y_i)$,
 \begin{align}
 \hspace{-2ex}\sum_{i=1}^N y_i \mathbf{e}_i\mathbf{e}_i^* =Y,\quad \sum_{i=1}^N y_i \mathbf{h}_i \mathbf{h}_i^*=UY U^*, \quad \sum_{i=1}^N y_i \mathbf{h}_i\mathbf{e}_i^* = U Y\Theta^*, \quad \sum_{i=1}^N y_i \mathbf{e}_i\mathbf{h}_i^* =  Y\Theta U^*, \label{02231251}
 \end{align}
 where $\Theta\deq \text{diag}(\e{\mathrm{i}\theta_i})$.  
Applying~(\ref{02231251}) to the quantities in~(\ref{0223125}), and using  $\|G(z)\|\leq \eta^{-1}$, we get
\begin{align*}
&\frac{1}{N^3}\sum_{i=1}^Ny_i \mathbf{e}_i^* \big(\star G \star G \star G\big)\mathbf{e}_i=\frac{1}{N^2}\ntr \big(\star G\star G\star GY\big)=O_\prec\Big(\frac{\ntr |G|^2}{N^2\eta}\Big)=O_\prec\Big(\frac{\Im \ntr G }{N^2\eta^2}\Big)=O(\Psi^4), \nonumber\\
& \frac{1}{N^3}\sum_{i=1}^Ny_i \mathbf{h}_i^* \big(\star G \star G \star G\big)\mathbf{e}_i= \frac{1}{N^2} \ntr (\star G \star G \star G Y\Theta U^*)=O_\prec\Big(\frac{\ntr |G|^2}{N^2\eta}\Big)=O(\Psi^4).
\end{align*}
For the terms in~(\ref{0223120}), we set $\widehat{y}_i=y_i \boldsymbol{\beta}_i^*Q_i'G\mathbf{e}_i$ and $\widehat{Y}\deq \text{diag}(\widehat{y}_i)$. First, we claim $|\widehat{y}_i|\prec 1$. Such a bound follows from~(\ref{030321}) in case $Q_i'=I$ or $\wt{B}^{\la i\ra}$. In case of $\boldsymbol{\beta}_i=\mathbf{e}_i$ and $Q_i'=\wt{B}$,  we have $\widehat{y}_i=y_i(\wt{B}G)_{ii}$ which is $O_\prec(1)$, according to $(\wt{B}G)_{ii}=1-(a_i-z)G_{ii}$ and $|G_{ii}|\prec 1$; in case of $\boldsymbol{\beta}_i=\mathbf{h}_i$ and $Q_i'=\wt{B}$, we can use~(\ref{030430}) to get $\widehat{y}_i=y_i\mathbf{h}_i^*\wt{B}G\mathbf{e}_i=-y_ib_i\mathbf{h}_i^*R_iG\mathbf{e}_i=y_ib_iG_{ii}$ and thus $|\widehat{y}_i|\prec 1$. Consequently, we have $\|\widehat{Y}\|\prec 1$. Applying this fact together with~(\ref{02231251}) to the quantities in~(\ref{0223120}), we obtain 
\begin{align*}
 \frac{1}{N^3}\sum_{i=1}^Ny_i \mathbf{e}_i^* \big(\star G \star G\star \big)\boldsymbol{\alpha}_i \boldsymbol{\beta}_i^*Q_i'G\mathbf{e}_i&= \frac{1}{N^3}\sum_{i=1}^N\widehat{y}_i \mathbf{e}_i^* \big(\star G \star G\star\big)\boldsymbol{\alpha}_i\nonumber\\
&=\frac{1}{N^2} \ntr \Big(\star G\star G\star \big(\sum_{i=1}^N \widehat{y}_i \boldsymbol{\alpha}_i\mathbf{e}_i^*\big)\Big)=O_\prec\Big(\frac{\Im \ntr G }{N^2\eta}\Big)=O_\prec(\Psi^4),\nonumber\\
\end{align*}
and
\begin{align*}
 \frac{1}{N^3}\sum_{i=1}^Ny_i \mathbf{h}_i^* \big(\star G\star G\star \big)\boldsymbol{\alpha}_i \boldsymbol{\beta}_i^*Q_i'G\mathbf{e}_i&=\frac{1}{N^3}\sum_{i=1}^N\widehat{y}_i \mathbf{h}_i^* \big(\star G\star G\star \big)\boldsymbol{\alpha}_i \nonumber\\
&=\frac{1}{N^2} \ntr \Big(\star G\star G\star \big(\sum_{i=1}^N\widehat{y}_i \boldsymbol{\alpha}_i\mathbf{h}_i^*\big)\Big)=O_\prec\Big(\frac{\Im \ntr G }{N^2\eta}\Big)=O_\prec(\Psi^4),
\end{align*}
where we used the fact $\boldsymbol{\alpha}_i=\mathbf{e}_i$ or $\mathbf{h}_i$ and the identities in~(\ref{02231251}) to show $\|\sum_{i=1}^N\widehat{y}_i \boldsymbol{\alpha}_i\boldsymbol{\beta}_i^*\|\prec1$ for $\boldsymbol{\alpha}_i, \boldsymbol{\beta}_i=\mathbf{e}_i$ or $\mathbf{h}_i$.
Hence, we conclude the proof of Lemma~\ref{lem.022302}.
 \end{proof}

\begin{proof}[Proof of Lemma~\ref{lem.022303}] By assumption, both $\wt{d}_i$ and $Q_i$ are independent of $\mathbf{g}_i$ for each $i\in \llbracket 1, N\rrbracket$. Hence, using integration by parts formula~(\ref{030420}), we obtain
\begin{align}
&\mkern-18mu\mathbb{E}\Big[\Big(\frac{1}{N}\sum_{i=1}^N \wt{d}_i\big(\mathring{\mathbf{g}}_i^*Q_i \mathbf{g}_i-\ntr \mathring{Q}_i\big) G_{ii} \Big) \mathfrak{q}(p-1,p)\Big]\nonumber\\
&\qquad=\mathbb{E}\Big[\Big(\frac{1}{N}\sum_{i=1}^N \wt{d}_i\sum_{k:k\neq i}\bar{g}_{ik}\mathbf{e}_k^*Q_i \mathbf{g}_i G_{ii} \Big) \mathfrak{q}(p-1,p)\Big]-\mathbb{E}\Big[\Big(\frac{1}{N}\sum_{i=1}^N \wt{d}_i \ntr \mathring{Q}_i  G_{ii} \Big) \mathfrak{q}(p-1,p)\Big]\nonumber\\
&\qquad=\mathbb{E}\Big[\Big(\frac{1}{N^2}\sum_{i=1}^N \wt{d}_i\sum_{k:k\neq i}\mathbf{e}_k^*Q_i \mathbf{g}_i \mathbf{e}_i^*\frac{\partial G}{\partial g_{ik}}\mathbf{e}_i \Big) \mathfrak{q}(p-1,p)\Big]\nonumber\\
&\qquad\qquad+\mathbb{E}\Big[\Big(\frac{p-1}{N^2}\sum_{i=1}^N \wt{d}_i\sum_{k:k\neq i}\mathbf{e}_k^*Q_i \mathbf{g}_i G_{ii} \frac{1}{N}\sum_{j=1}^N d_j\frac{\partial Z_j}{\partial g_{ik}}\Big) \mathfrak{q}(p-2,p)\Big]\nonumber\\
&\qquad\qquad+\mathbb{E}\Big[\Big(\frac{p}{N^2}\sum_{i=1}^N \wt{d}_i\sum_{k:\neq i}\mathbf{e}_k^*Q_i \mathbf{g}_i G_{ii} \frac{1}{N}\sum_{j=1}^N\overline{d_j}\frac{\overline{\partial Z_j}}{\partial g_{ik}}\Big) \mathfrak{q}(p-1,p-1)\Big]. \label{0223200}
\end{align}
We start with the first term of the right side of~(\ref{0223200}). Recalling~(\ref{022140}) and ~(\ref{022240}), and using the shorthand notation $\widehat{d}_i\deq \wt{d}_i  c_i$, we have
\begin{align*}
\frac{1}{N^2}\sum_{i=1}^N &\wt{d}_i\sum_{k:k\neq i}\mathbf{e}_k^*Q_i \mathbf{g}_i \mathbf{e}_i^*\frac{\partial G}{\partial g_{ik}}\mathbf{e}_i\nonumber\\
=& \frac{1}{N^2}\sum_{i=1}^N \widehat{d}_i \sum_{k:k\neq i}\mathbf{e}_k^*Q_i \mathbf{g}_i \Big[\mathbf{e}_i^*G\mathbf{e}_k (\mathbf{e}_i+\mathbf{h}_i)^*\wt{B}^{\la i\ra} R_i G\mathbf{e}_i+\mathbf{e}_i^*GR_i\wt{B}^{\la i\ra}\mathbf{e}_k (\mathbf{e}_i+\mathbf{h}_i)^*G\mathbf{e}_i\Big]\nonumber\\
&\qquad+ \frac{1}{N^2}\sum_{i=1}^N \widehat{d}_i\sum_{k:k\neq i}\mathbf{e}_k^*Q_i \mathbf{g}_i  \mathbf{e}_i^*\Delta_G(i,k)\mathbf{e}_i\nonumber\\
=& \frac{1}{N^2}\sum_{i=1}^N \widehat{d}_i  \Big[-\mathbf{e}_i^*GQ_i \mathbf{g}_i\big(b_iT_i+(\wt{B}G)_{ii}\big)+\mathbf{e}_i^*GR_i\wt{B}^{\la i\ra}Q_i \mathbf{g}_i \big(G_{ii}+T_i\big)\Big]\nonumber\\
&\qquad-\frac{1}{N^2}\sum_{i=1}^N \widehat{d}_i \mathbf{e}_i^*Q_i \mathbf{g}_i \Big[-G_{ii} \big(b_iT_i+(\wt{B}G)_{ii}\big)-b_i\mathbf{e}_i^*G\mathbf{h}_i\big(G_{ii}+T_i\big)\Big]\nonumber\\
&\qquad+ \frac{1}{N^2}\sum_{i=1}^N \widehat{d}_i\sum_{k:k\neq i}\mathbf{e}_k^*Q_i \mathbf{g}_i  \mathbf{e}_i^*\Delta_G(i,k)\mathbf{e}_i\nonumber\\
=& \frac{1}{N^2}\sum_{i=1}^N \widehat{d}_i\sum_{k:k\neq i}\mathbf{e}_k^*Q_i \mathbf{g}_i  \mathbf{e}_i^*\Delta_G(i,k)\mathbf{e}_i+O_\prec(\Psi^2),
\end{align*}
where in the second step we separated the sum $\sum_i\sum_{k:k\neq i}= \sum_{k,i}-\sum_{k=i}$ and used~(\ref{022650}), and in the last step we used the bound~(\ref{030321}) again. 
Then the estimate 
\begin{align}
\frac{1}{N^2}\sum_{i=1}^N \wt{d}_i\sum_{k:k\neq i}\mathbf{e}_k^*Q_i \mathbf{g}_i \mathbf{e}_i^*\frac{\partial G}{\partial g_{ik}}\mathbf{e}_i=O_\prec (\Psi^2) \label{022653}
\end{align}
is implied by the following lemma, whose proof will be postponed to Appendix~\ref{a.A}. 

\begin{lem} \label{lem.022407}Suppose that the assumptions of Theorem~\ref{thm.convergence rate} and~\eqref{le b equation} hold. Let $\widehat{d}_1, \cdots, \widehat{d}_N$ be any possibly $z$-dependent complex random variables satisfying ${\max_{i\in\llbracket 1, N\rrbracket}}|\widehat{d}_i|\prec 1$ uniformly on $\mathcal{S}_{\mathcal{I}}(\eta_{\mathrm{m}},1)$, and let $Q_i=I$ or~$\wt{B}^{\la i\ra}$. Then,
\begin{align}
\frac{1}{N^2}\sum_{i=1}^N \widehat{d}_i\sum_{k:k\neq i}\mathbf{e}_k^*Q_i \mathbf{g}_i  \mathbf{e}_i^*\Delta_G(i,k)\mathbf{e}_i=O_\prec(\Psi^2). \label{022470}
\end{align}
\end{lem}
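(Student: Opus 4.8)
The plan is to exploit that the only explicit $k$-dependence of $\Delta_G(i,k)$ resides in the scalar prefactor $\bar g_{ik}$, so that the inner $k$-summation collapses to a harmless quadratic form in $\mathbf{g}_i$, after which Lemma~\ref{lem. bound for quadratic forms} finishes the estimate.

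First I would isolate the $\bar g_{ik}$. Reading off~\eqref{0223300}, write $\Delta_R(i,k)=\bar g_{ik}P_i$ with
\[
P_i\deq \frac{\ell_i^2}{2\|\mathbf{g}_i\|_2^2}\big(\mathbf{e}_i\mathbf{h}_i^*+\mathbf{h}_i\mathbf{e}_i^*+2\mathbf{h}_i\mathbf{h}_i^*\big)-\frac{\ell_i^4}{2\|\mathbf{g}_i\|_2^3}g_{ii}(\mathbf{e}_i+\mathbf{h}_i)(\mathbf{e}_i+\mathbf{h}_i)^*
\]
independent of $k$; by~\eqref{022360} this gives $\Delta_G(i,k)=\bar g_{ik}M_i$ with $M_i\deq -GP_i\wt{B}^{\la i\ra}R_iG-GR_i\wt{B}^{\la i\ra}P_iG$, also independent of $k$. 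Since $\sum_{k:k\neq i}\bar g_{ik}\mathbf{e}_k^*Q_i\mathbf{g}_i=\mathring{\mathbf{g}}_i^*Q_i\mathbf{g}_i$ (recall $\mathring{\mathbf{g}}_i=\mathbf{g}_i-g_{ii}\mathbf{e}_i$), the left side of~\eqref{022470} equals
\[
\frac{1}{N^2}\sum_{i=1}^N \widehat{d}_i\,\big(\mathring{\mathbf{g}}_i^*Q_i\mathbf{g}_i\big)\,\big(\mathbf{e}_i^*M_i\mathbf{e}_i\big).
\]

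Next I would bound the two scalar factors uniformly in $i\in\llbracket 1,N\rrbracket$. For $Q_i=I$ one has $\mathring{\mathbf{g}}_i^*\mathbf{g}_i=\|\mathbf{g}_i\|_2^2-g_{ii}^2=1+O_\prec(N^{-1/2})$, while for $Q_i=\wt{B}^{\la i\ra}$, using $\wt{B}^{\la i\ra}\mathbf{e}_i=b_i\mathbf{e}_i$ and a standard large deviation bound for $\mathring{\mathbf{g}}_i^*\wt{B}^{\la i\ra}\mathring{\mathbf{g}}_i$ together with $\ntr\wt{B}^{\la i\ra}=\ntr B=0$, one gets $|\mathring{\mathbf{g}}_i^*Q_i\mathbf{g}_i|\prec N^{-1/2}$; in either case $|\mathring{\mathbf{g}}_i^*Q_i\mathbf{g}_i|\prec 1$. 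For $\mathbf{e}_i^*M_i\mathbf{e}_i$ I would expand $P_i$ into its rank-one pieces $\boldsymbol{\alpha}_i\boldsymbol{\beta}_i^*$, $\boldsymbol{\alpha}_i,\boldsymbol{\beta}_i\in\{\mathbf{e}_i,\mathbf{h}_i\}$ (the coefficients being $O_\prec(1)$, or $O_\prec(N^{-1/2})$ for the $g_{ii}$-term), which turns $\mathbf{e}_i^*M_i\mathbf{e}_i$ into a finite sum of products of two scalars, each of the form $\mathbf{e}_i^*G\boldsymbol{\alpha}_i$, $\boldsymbol{\beta}_i^*\wt{B}^{\la i\ra}R_iG\mathbf{e}_i$, $\mathbf{e}_i^*GR_i\wt{B}^{\la i\ra}\boldsymbol{\alpha}_i$, or $\boldsymbol{\beta}_i^*G\mathbf{e}_i$. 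Using~\eqref{022650} and~\eqref{030430} to rewrite the factors containing $R_i$ in terms of $T_i$, $(\wt{B}G)_{ii}$, $\mathbf{e}_i^*G\mathbf{h}_i$ and $(G\wt{B})_{ii}=1-(a_i-z)G_{ii}$, and then invoking~\eqref{030321} of Lemma~\ref{lem. bound for quadratic forms} together with~\eqref{entrywise estimate}, every such scalar is $O_\prec(1)$ uniformly in $i$, hence $|\mathbf{e}_i^*M_i\mathbf{e}_i|\prec 1$ uniformly in $i$.

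Combining $|\widehat{d}_i|\prec 1$ with the two bounds above and a union bound over $i\in\llbracket 1,N\rrbracket$, the displayed sum is $O_\prec(N^{-1})$; and since $\Psi^2=(N\eta)^{-1}\ge N^{-1}$ on $\mathcal{S}_{\mathcal{I}}(\eta_{\mathrm{m}},1)$, this is $O_\prec(\Psi^2)$, with the accompanying moment bound of~\eqref{032601} following from the structure of the expression (finite products of deterministically $\eta^{-k}$-bounded Green function quantities and Gaussian quadratic forms) as explained there. The only point requiring any care is noticing the factorization $\Delta_G(i,k)=\bar g_{ik}M_i$, which reduces the $k$-sum to the innocuous scalar $\mathring{\mathbf{g}}_i^*Q_i\mathbf{g}_i$; after that the argument is routine bookkeeping with the quadratic-form bounds of Lemma~\ref{lem. bound for quadratic forms}, so there is no substantial obstacle.
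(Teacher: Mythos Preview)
Your proposal is correct and follows essentially the same approach as the paper's proof: both exploit that $\Delta_G(i,k)=\bar g_{ik}M_i$ (equivalently, that $\Delta_G(i,k)$ is a sum of terms of the form~\eqref{022401}) so that the $k$-sum collapses to $\mathring{\mathbf{g}}_i^*Q_i\mathbf{g}_i$, and then bound each remaining factor by $O_\prec(1)$ via~\eqref{030430} and Lemma~\ref{lem. bound for quadratic forms} to obtain $O_\prec(N^{-1})\le O_\prec(\Psi^2)$. Your write-up is slightly more explicit in naming $P_i$ and $M_i$, but the argument is the same.
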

 Now we investigate the last two terms of~(\ref{0223200}). Let $\widehat{d}_1, \ldots,\widehat{d}_N$ be any possibly $z$-dependent complex random variables satisfying $|\widehat{d}_i|\prec 1$ uniformly on $\mathcal{S}_{\mathcal{I}}(\eta_{\mathrm{m}},1)$.  Let $Q_i=I$ or $\wt{B}^{\la i\ra}$. We claim that 
\begin{align}
\frac{1}{N^3}\sum_{i=1}^N\sum_{k:k\neq i} \widehat{d}_i\mathbf{e}_k^* Q_i\mathbf{g}_i \sum_{j=1}^N d_j\frac{\partial Z_j}{\partial g_{ik}}=O_\prec (\Psi^4) \label{022651}
\end{align}
holds uniformly on $\mathcal{S}_{\mathcal{I}}(\eta_{\mathrm{m}},1)$, and the same estimate holds if we replace $d_j$ and $Z_j$ by their complex conjugates. 
The proof of~(\ref{022651}) is nearly the same as~(\ref{022351}).  The only difference is a missing $G$ in the factor $\mathbf{e}_k^* Q_i\mathbf{g}_i$ which played no essential r\^ole in the proof of~(\ref{022351}). We omit the details of the proof of~(\ref{022651}). 	

Using~(\ref{022653}) and~(\ref{022651}) to~(\ref{0223200}), we can conclude the proof of Lemma~\ref{lem.022303}.
\end{proof}

\appendix

\section{}\label{the appendix A}
In this appendix, we collect some basic tools from random matrix theory.
\subsection{Stochastic domination and large deviation properties}\label{stochastic domination section}
Recall the stochastic domination in Definition~\ref{definition of stochastic domination}. The relation $\prec$ is a partial ordering: it is transitive and it satisfies the arithmetic rules of an order relation, {\it e.g.}, if $X_1\prec Y_1$ and $X_2\prec Y_2$ then $X_1+X_2\prec Y_1+Y_2$ and $X_1 X_2\prec Y_1 Y_2$. Further assume that $\Phi(v)\ge N^{-C}$ is deterministic and that~$Y(v)$ is a nonnegative random variable satisfying $\E [Y(v)]^2\le N^{C'}$ for all~$v$. Then $Y(v) \prec \Phi(v)$, uniformly in $v$, implies $\E [Y(v)] \prec \Phi(v)$, uniformly in~$v$.

Gaussian vectors have well-known large deviation properties. We will use them in the following form
whose proof is standard.  
\begin{lem} \label{lem.091720} Let $X=(x_{ij})\in M_N(\C)$ be a deterministic matrix and let $\bs{y}=(y_{i})\in\C^N$ be a deterministic complex vector. For a Gaussian real or complex random vector $\mathbf{g}=(g_1,\ldots, g_N)\in \mathcal{N}_{\mathbb{R}}(0,\sigma^2 I_N)$ or $\mathcal{N}_{\mathbb{C}}(0,\sigma^2 I_N)$, we have
 \begin{align}\label{091731}
  |\bs{y}^* \bs{g}|\prec\sigma \|\bs{y}\|_2,\qquad\qquad  |\bs{g}^* X\bs{g}-\sigma^2N \ntr X|\prec \sigma^2\| X\|_2.
 \end{align}
\end{lem}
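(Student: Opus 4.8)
The plan is to verify both estimates directly against Definition~\ref{definition of stochastic domination}, producing a Gaussian tail bound for the linear form and a moment (equivalently sub-exponential) tail bound for the quadratic form. Since the right-hand sides $\sigma\|\bs y\|_2$ and $\sigma^2\|X\|_2$ are deterministic, it suffices in each case to bound the relevant probability by $N^{-D}$ for $N$ large; if $\bs y=0$ or $X=0$ the claim is trivial, so we may assume these are nonzero.

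\textbf{Linear form.} First I would note that $\bs y^*\bs g=\sum_i\bar y_i g_i$ is a centered Gaussian random variable (real or complex) whose variance equals $\sigma^2\|\bs y\|_2^2$ in the real case, while in the complex case its real and imaginary parts each have variance at most $\tfrac12\sigma^2\|\bs y\|_2^2$. In either case there are absolute constants $c,C>0$ with $\P\big(|\bs y^*\bs g|>t\,\sigma\|\bs y\|_2\big)\le C\e{-ct^2}$ for all $t>0$. Given $\epsilon>0$ and $D>0$, taking $t=N^\epsilon$ gives $\P\big(|\bs y^*\bs g|>N^\epsilon\sigma\|\bs y\|_2\big)\le C\e{-cN^{2\epsilon}}\le N^{-D}$ once $N\ge N_0(\epsilon,D)$, which is exactly $|\bs y^*\bs g|\prec\sigma\|\bs y\|_2$.

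\textbf{Quadratic form.} Since $\E[\bar g_i g_j]=\sigma^2\delta_{ij}$ in both the real and complex setups, one has $\E[\bs g^*X\bs g]=\sigma^2\sum_i X_{ii}=\sigma^2 N\ntr X$, so the claim is an estimate on the fluctuation $\bs g^*X\bs g-\E[\bs g^*X\bs g]=\sum_{i\ne j}X_{ij}\bar g_i g_j+\sum_i X_{ii}\big(|g_i|^2-\sigma^2\big)$. For any fixed integer $p\ge1$ I would expand $\E\big[\,\big|\bs g^*X\bs g-\sigma^2 N\ntr X\big|^{2p}\,\big]$ and evaluate the resulting Gaussian expectation by Wick's theorem; a routine power-counting over the surviving pairings yields
\[
\E\big[\,\big|\bs g^*X\bs g-\sigma^2 N\ntr X\big|^{2p}\,\big]\le C_p\,\sigma^{4p}\big(\|X\|_2^{2p}+\|X\|^{2p}\big)\le 2C_p\,\big(\sigma^2\|X\|_2\big)^{2p},
\]
where the last step uses $\|X\|\le\|X\|_2$. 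By Markov's inequality, $\P\big(\big|\bs g^*X\bs g-\sigma^2 N\ntr X\big|>N^\epsilon\sigma^2\|X\|_2\big)\le 2C_p\,N^{-2p\epsilon}$, and choosing $p$ with $2p\epsilon>D$ bounds this by $N^{-D}$ for $N$ large, giving $\big|\bs g^*X\bs g-\sigma^2 N\ntr X\big|\prec\sigma^2\|X\|_2$. Equivalently, one may invoke the Hanson--Wright inequality, which directly produces the tail bound $C\exp\big(-c\min\{t^2/(\sigma^4\|X\|_2^2),\,t/(\sigma^2\|X\|)\}\big)$, after which one sets $t=N^\epsilon\sigma^2\|X\|_2$ and uses $\|X\|\le\|X\|_2$.

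\textbf{Expected difficulty.} There is no essential obstacle here: the content is just the two standard Gaussian tail estimates. The only points requiring a little care are tracking constants in the complex Gaussian case (equivalently, reducing to a Hermitian $X$ via $X=\tfrac12(X+X^*)+\tfrac12(X-X^*)$ together with $\|\tfrac12(X\pm X^*)\|_2\le\|X\|_2$ before diagonalizing) and checking that the thresholds $N_0(\epsilon,D)$ may be chosen as required; both are immediate since all the tail bounds hold with absolute constants.
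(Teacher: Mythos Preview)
Your proposal is correct. The paper does not give a proof of this lemma at all: it simply states that ``Gaussian vectors have well-known large deviation properties'' and that the proof ``is standard.'' What you have written is exactly the standard argument the paper is alluding to --- Gaussian tail for the linear form, and either a high-moment expansion via Wick's rule or Hanson--Wright for the quadratic form --- so there is nothing to compare.
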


\subsection{Rank-one perturbation formula}
At various places, we use the following fundamental perturbation formula: for $\bs{\alpha},\bs{\beta}\in\C^N$ and an invertible $D\in M_N(\C)$, we have
\begin{align}
\big(D+\bs{\alpha}\bs{\beta}^*\big)^{-1}=D^{-1}-\frac{D^{-1}\bs{\alpha}\bs{\beta}^*D^{-1}}{1+\bs{\beta}^*D^{-1}\bs{\alpha}}, \label{091002Kevin}
\end{align}
as can be checked readily. A standard application of~\eqref{091002Kevin} is recorded in the following lemma.
\begin{lem}\label{finite rank perturbation}
Let $D\in M_N(\C)$ be Hermitian and let $Q\in M_N(\C)$ be arbitrary. Then, for any finite-rank Hermitian matrix $R\in M_N(\C)$, we have
\begin{align}
\bigg|\ntr \Big(Q\big(D+R-z\big)^{-1}\Big)-\ntr \Big(Q(D-z)^{-1}\Big)\bigg| &\leq \frac{\mathrm{rank}(R)\|Q\|}{N\eta},\qquad z=E+\ii\eta\in\C^+.
 \label{091002}
\end{align}
\end{lem}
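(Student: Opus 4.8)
The plan is to reduce to the rank-one case by a telescoping argument and then exploit the positivity of the imaginary part of the resolvent of a Hermitian matrix to keep the bound linear (rather than quadratic) in $\eta^{-1}$.

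\medskip

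\emph{Step 1: reduction to rank one.} By the spectral theorem I would write $R=\sum_{k=1}^{r}\epsilon_k\boldsymbol{\alpha}_k\boldsymbol{\alpha}_k^*$ with $r\deq\mathrm{rank}(R)$, $\epsilon_k\in\{-1,+1\}$, and $\boldsymbol{\alpha}_k\in\mathbb{C}^N$ the eigenvectors of $R$ associated with its nonzero eigenvalues, rescaled by the square roots of the absolute values of those eigenvalues. Setting $D_0\deq D$ and $D_j\deq D+\sum_{k=1}^{j}\epsilon_k\boldsymbol{\alpha}_k\boldsymbol{\alpha}_k^*$ for $j\in\llbracket1,r\rrbracket$, each $D_j$ is Hermitian, $D_r=D+R$, and each $D_j-z$ is invertible with $\|(D_j-z)^{-1}\|\le\eta^{-1}$ since $\im z=\eta>0$. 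By the triangle inequality it then suffices to prove the one-step bound $\big|\ntr\big(Q(D_j-z)^{-1}\big)-\ntr\big(Q(D_{j-1}-z)^{-1}\big)\big|\le\|Q\|/(N\eta)$ for each $j$, and sum over $j\in\llbracket1,r\rrbracket$.

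\medskip

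\emph{Step 2: the rank-one estimate.} Fix $j$, abbreviate $M\deq D_{j-1}-z$, $\boldsymbol{\alpha}\deq\boldsymbol{\alpha}_j$, $\epsilon\deq\epsilon_j$, so $D_j-z=M+\epsilon\boldsymbol{\alpha}\boldsymbol{\alpha}^*$. Applying the rank-one perturbation formula~\eqref{091002Kevin} and the cyclicity of the trace gives
\begin{align*}
\ntr\big(Q(D_j-z)^{-1}\big)-\ntr\big(QM^{-1}\big)=-\frac{1}{N}\,\frac{\epsilon\,\boldsymbol{\alpha}^*M^{-1}QM^{-1}\boldsymbol{\alpha}}{1+\epsilon\,\boldsymbol{\alpha}^*M^{-1}\boldsymbol{\alpha}}\,.
\end{align*}
For the numerator, Cauchy--Schwarz and the normality of $M$ (hence of $M^{-1}$, with $\|(M^{-1})^*\boldsymbol{\alpha}\|_2=\|M^{-1}\boldsymbol{\alpha}\|_2$) yield $\big|\boldsymbol{\alpha}^*M^{-1}QM^{-1}\boldsymbol{\alpha}\big|\le\|(M^{-1})^*\boldsymbol{\alpha}\|_2\,\|Q\|\,\|M^{-1}\boldsymbol{\alpha}\|_2=\|Q\|\,\|M^{-1}\boldsymbol{\alpha}\|_2^2$. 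For the denominator, since $D_{j-1}$ is Hermitian one has $\im M^{-1}=\eta\,M^{-1}(M^{-1})^*\succeq0$, so $\big|\im(\boldsymbol{\alpha}^*M^{-1}\boldsymbol{\alpha})\big|=\eta\,\|M^{-1}\boldsymbol{\alpha}\|_2^2$ and therefore $\big|1+\epsilon\,\boldsymbol{\alpha}^*M^{-1}\boldsymbol{\alpha}\big|\ge\eta\,\|M^{-1}\boldsymbol{\alpha}\|_2^2$. Dividing, the common factor $\|M^{-1}\boldsymbol{\alpha}\|_2^2$ cancels and the one-step bound $\|Q\|/(N\eta)$ follows; summing over $j$ proves~\eqref{091002}.

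\medskip

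\emph{Main point of care.} The delicate part is precisely the cancellation in Step 2 that keeps the estimate proportional to $\eta^{-1}$: a cruder argument — say, bounding $\|M^{-1}QM^{-1}\|\le\|Q\|\eta^{-2}$ and discarding the denominator — would produce a bound of size $\mathrm{rank}(R)\,\|R\|\,\|Q\|\,\eta^{-2}/N$, which is both a power of $\eta$ worse and contaminated by $\|R\|$. Everything else (the spectral decomposition of $R$, the telescoping, and that each intermediate $D_j$ remains Hermitian so that $\|(D_j-z)^{-1}\|\le\eta^{-1}$ and the imaginary-part positivity persist) is routine.
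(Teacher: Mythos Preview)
Your proof is correct and follows precisely the route the paper implicitly indicates: the paper does not spell out a proof of this lemma but simply states it as ``a standard application of~\eqref{091002Kevin}'', and your telescoping into rank-one steps together with the Sherman--Morrison identity and the Ward-type cancellation $\im(\boldsymbol{\alpha}^*M^{-1}\boldsymbol{\alpha})=\eta\|M^{-1}\boldsymbol{\alpha}\|_2^2$ is exactly that standard argument. Your emphasis on why the bound stays $\eta^{-1}$ rather than $\eta^{-2}$ is well placed and is the only non-routine point.
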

Using Lemma~\ref{finite rank perturbation}, we also have the following corollary. 
\begin{cor} \label{cor. finite perturbation}With the notations in~(\ref{0911401}) and~(\ref{090820}), we have 
\begin{align}
\big|\ntr G-\ntr G^{\la i\ra}\big|\leq C\Psi^2,\quad \big| \ntr \wt{B}^{\la i\ra} G^{\la i\ra}-\wt{B}G\big|\leq C \Psi^2,\quad \big| \ntr \wt{B}^{\la i\ra} G^{\la i\ra}\wt{B}^{\la i\ra}-\wt{B}G\wt{B}\big|\leq C \Psi^2. \label{finite rank perturbation for tracial quantities}
\end{align}
\end{cor}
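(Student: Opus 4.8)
The plan is to reduce all three bounds to the finite-rank perturbation estimate of Lemma~\ref{finite rank perturbation}, together with the elementary inequality $|\ntr M|\le N^{-1}\,\mathrm{rank}(M)\,\|M\|$ for any $M\in M_N(\C)$ (the trace is a sum of at most $\mathrm{rank}(M)$ nonzero eigenvalues, each of modulus at most $\|M\|$). Recall $\eta=\im z$, so that $\Psi^2=(N\eta)^{-1}$ is precisely the scale appearing in~\eqref{091002}. The one structural fact needed is that $H$ and $H^{\la i\ra}$ differ by a matrix of $N$-independent rank: by~\eqref{decomposition} and~\eqref{0911401} one has $\wt B=R_i\wt B^{\la i\ra}R_i$ with $R_i=I-\mathbf{r}_i\mathbf{r}_i^*$ a Hermitian reflection ($R_i^2=I$, hence $R_i$ unitary), whence
\begin{align*}
H-H^{\la i\ra}=\wt B-\wt B^{\la i\ra}=-\mathbf{r}_i\mathbf{r}_i^*\wt B^{\la i\ra}-\wt B^{\la i\ra}\mathbf{r}_i\mathbf{r}_i^*+(\mathbf{r}_i^*\wt B^{\la i\ra}\mathbf{r}_i)\mathbf{r}_i\mathbf{r}_i^*.
\end{align*}
This is Hermitian, its range lies in $\mathrm{span}\{\mathbf{r}_i,\wt B^{\la i\ra}\mathbf{r}_i\}$ so $\mathrm{rank}(H-H^{\la i\ra})\le 2$, and $\|H-H^{\la i\ra}\|\le 2\|B\|\le C$ by~\eqref{le bounded A and B} and unitarity of $R_i$.

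The first estimate is immediate: apply Lemma~\ref{finite rank perturbation} with $D=H^{\la i\ra}$, $R=H-H^{\la i\ra}$, $Q=I$, to get $|\ntr G-\ntr G^{\la i\ra}|\le 2(N\eta)^{-1}=2\Psi^2$. For the remaining two I would first use cyclicity to rewrite $\ntr(\wt B G\wt B)=\ntr(\wt B^2 G)$ and $\ntr(\wt B^{\la i\ra}G^{\la i\ra}\wt B^{\la i\ra})=\ntr((\wt B^{\la i\ra})^2 G^{\la i\ra})$, and then compare $\ntr(QG)$ with $\ntr(Q^{\la i\ra}G^{\la i\ra})$ for $(Q,Q^{\la i\ra})\in\{(\wt B,\wt B^{\la i\ra}),(\wt B^2,(\wt B^{\la i\ra})^2)\}$ via the splitting
\begin{align*}
\ntr(QG)-\ntr(Q^{\la i\ra}G^{\la i\ra})=\ntr\big((Q-Q^{\la i\ra})G\big)+\ntr\big(Q^{\la i\ra}(G-G^{\la i\ra})\big).
\end{align*}
The second term is controlled by Lemma~\ref{finite rank perturbation} with this $Q^{\la i\ra}$ playing the rôle of $Q$ (Hermitian, $\|Q^{\la i\ra}\|\le C$) and $R=H-H^{\la i\ra}$, giving at most $2C(N\eta)^{-1}=2C\Psi^2$. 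For the first term, $Q-Q^{\la i\ra}$ again has bounded rank and norm: for $Q=\wt B$ it equals the displayed matrix above (rank $\le 2$), and for $Q=\wt B^2$ one writes $\wt B^2-(\wt B^{\la i\ra})^2=\wt B(\wt B-\wt B^{\la i\ra})+(\wt B-\wt B^{\la i\ra})\wt B^{\la i\ra}$ (rank $\le 4$). Since $\|G\|\le\eta^{-1}$, the matrix $(Q-Q^{\la i\ra})G$ has rank $\le 4$ and operator norm $\le C\eta^{-1}$, so the elementary trace inequality yields $|\ntr((Q-Q^{\la i\ra})G)|\le C'(N\eta)^{-1}=C'\Psi^2$. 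Summing the two contributions proves the second and third estimates.

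The argument is routine; the only points requiring (minor) care are (i) checking that $H-H^{\la i\ra}$ and the observable differences $Q-Q^{\la i\ra}$ have $N$-independent rank, which is entirely a consequence of the structure $\wt B=R_i\wt B^{\la i\ra}R_i$ with $R_i$ a rank-one modification of the identity, and (ii) the uniform norm bounds $\|\wt B\|=\|\wt B^{\la i\ra}\|=\|B\|\le C$. As an alternative, one may bypass the splitting: by~\eqref{021701} and its $\la i\ra$-analogue, $\ntr(\wt B G)=1-\ntr((A-z)G)$ and $\ntr(\wt B G\wt B)=\ntr\wt B-\ntr(A-z)+\ntr((A-z)^2 G)$, so (using $\mathrm{Tr}\,\wt B^{\la i\ra}=\mathrm{Tr}\,B=\mathrm{Tr}\,\wt B$) the two differences reduce, up to sign, to $\ntr((A-z)(G-G^{\la i\ra}))$ and $\ntr((A-z)^2(G-G^{\la i\ra}))$, to which Lemma~\ref{finite rank perturbation} applies directly with $Q=A-z$ and $Q=(A-z)^2$, both of norm $\le C$ on $\mathcal{S}_{\mathcal{I}}(\eta_{\mathrm{m}},1)$.
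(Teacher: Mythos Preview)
Your proof is correct. The first bound matches the paper's argument verbatim. For the second and third bounds the paper proceeds a bit differently: it writes out the difference $\ntr(\wt B^{\la i\ra}G)-\ntr(\wt BG)$ explicitly as a sum of quadratic forms $\frac{1}{N}\mathbf{r}_i^*(\cdots)\mathbf{r}_i$ and then appeals to the a priori bounds~\eqref{030321} from Lemma~\ref{lem. bound for quadratic forms} (which ultimately rest on the local law of~\cite{BES15b}) to conclude $O_\prec(1/N)$; only then does it apply Lemma~\ref{finite rank perturbation} with $Q=\wt B^{\la i\ra}$ for the remaining piece. Your route is more elementary: the trace--rank--norm inequality $|\ntr M|\le N^{-1}\mathrm{rank}(M)\|M\|$ handles the $(Q-Q^{\la i\ra})G$ term without invoking any probabilistic input, and in fact yields a genuinely deterministic bound $C\Psi^2$, consistent with the $\le C\Psi^2$ wording of the statement (the paper's own argument, by contrast, produces $O_\prec$ for this piece). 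Your alternative via the identities~\eqref{021701}, reducing everything to $\ntr\big((A-z)^k(G-G^{\la i\ra})\big)$ and a single application of Lemma~\ref{finite rank perturbation}, is cleaner still and avoids the splitting altogether. Either version is a valid and slightly streamlined replacement for the paper's proof.
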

\begin{proof}
Recalling the Hermitian matrix $H^{\la i\ra}$ defined in~(\ref{090820}), we see that $H$ is a finite rank perturbation of $H^{\la i\ra}$ and the perturbation $H-H^{\la i\ra}$ is obviously Hermitian. Using ~(\ref{091002}) with $Q=I$, $D=H^{\la i\ra}$ and  $R=H-H^{\la i\ra}=B-\wt{B}^{\la i\ra}$, it is straightforward to get the first bound in~(\ref{finite rank perturbation for tracial quantities}). For the second bound, at first, we see that
\begin{align}
\ntr (\wt{B}^{\la i\ra}G)-\ntr (\wt{B}G)&=\ntr (\wt{B}^{\la i\ra}G)-\ntr (R_i\wt{B}^{\la i\ra} R_iG)\nonumber\\
&=\frac{1}{N}\mathbf{r}_i^* \wt{B}^{\la i\ra} G\mathbf{r}_i+\frac{1}{N}\mathbf{r}_i^*G\wt{B}^{\la i\ra}\mathbf{r}_i-\frac{1}{N}\mathbf{r}_i^*\wt{B}^{\la i\ra} \mathbf{r}_i\mathbf{r}_iG\mathbf{r}_i=O_\prec(\frac{1}{N}), \label{030501}
\end{align}
where in the last step we used the fact $\mathbf{r}_i=\ell_i(\mathbf{e}_i+\mathbf{h}_i)$, the estimates in~(\ref{021910}), and the bound in~(\ref{030321}). Then applying ~(\ref{091002}) with $Q=\wt{B}^{\la i\ra}$, $D=H^{\la i\ra}$ and  $R=H-H^{\la i\ra}=B-\wt{B}^{\la i\ra}$, we  obtain 
\begin{align}
\big|\ntr (\wt{B}^{\la i\ra}G^{\la i\ra})-\ntr (\wt{B}^{\la i\ra}G)\big|\leq C\Psi^2. \label{030505}
\end{align}
Combining~(\ref{030501}) and~(\ref{030505}) yields  the second estimate in~(\ref{finite rank perturbation for tracial quantities}). The third one in~(\ref{finite rank perturbation for tracial quantities}) can be verified similarly. We omit the details. So we complete  the proof of Corollary~\ref{cor. finite perturbation}.
\end{proof}

\section{} \label{a.A}
In this appendix, we estimate the terms with $\Delta_R(i,k)$'s involved. More specifically, we will prove Lemmas~\ref{lem.022201},~\ref{lem.022405} and~\ref{lem.022407}.

According to~(\ref{0223300}), we see that $\Delta_R(i,k)$ is the sum of terms of the form 
\begin{align*}
\widehat{d}_i \bar{g}_{ik}\, \boldsymbol{\alpha}_i\boldsymbol{\beta}_i^*,  
\end{align*}
for some $\widehat{d}_i\in \mathbb{C}$ satisfying $|\widehat{d}_i|\prec 1$ uniformly on $\mathcal{S}_{\mathcal{I}}(\eta_{\mathrm{m}},1)$, and $\boldsymbol{\alpha}_i,\boldsymbol{\beta}_i=\mathbf{e}_i$ or $\mathbf{h}_i$. Hereafter $\widehat{d}_i$ can change from line to line, up to the bound $|\widehat{d}_i|\prec 1$ uniformly on $\mathcal{S}_{\mathcal{I}}(\eta_{\mathrm{m}},1)$. Then, by~(\ref{022360}), we see that 
$\Delta_G(i,k)$ is a  sum of the terms of the form 
\begin{align}
\widehat{d}_i \bar{g}_{ik} G\boldsymbol{\alpha}_i\boldsymbol{\beta}_i^*\wt{B}^{\la i\ra} R_i G,\qquad\quad \widehat{d}_i \bar{g}_{ik}G R_i \wt{B}^{\la i\ra} \boldsymbol{\alpha}_i\boldsymbol{\beta}_i^*G. \label{022401}
\end{align}
Recalling the definition of $\Delta_{Z_j}(i,k)$ in~(\ref{022250}) and the matrix $D$ in~(\ref{022251}), we see that 
\begin{align}
 \sum_{j=1}^N d_j \Delta_{Z_j}(i,k)= & N\ntr \Delta_G(i,k) \text{tr} D -N\mathcal{A}_1\ntr \big(\Delta_G(i,k)D\big)\nonumber\\
 &\qquad-N\ntr \big(\Delta_G(i,k)AD\big) \ntr G+N\mathcal{A}_2\ntr \big(DG\big)\ntr \big((A-z)\Delta_G(i,k)\big)\nonumber\\
&\qquad+N\ntr \Delta_G(i,k)\Big( \mathcal{A}_3\ntr \big(DG\big)-\ntr \big(GAD\big)\Big)\nonumber\\
&\qquad+ N\ntr G \ntr \big(GD\big) \ntr\Big((A-z)^2\Delta_G(i,k)\Big). \label{022402}
\end{align}
Then, according to~(\ref{022401}) and~(\ref{022402}), we see that $\sum_j d_j \Delta_{Z_j}(i,k)$ is the sum of the terms of the form
\begin{align}
N\widehat{d}_i \bar{g}_{ik} \ntr \big(QG\boldsymbol{\alpha}_i\boldsymbol{\beta}_i^*\wt{B}^{\la i\ra} R_i G\big)=\widehat{d}_i \bar{g}_{ik} \boldsymbol{\beta}_i^*\wt{B}^{\la i\ra} R_i GQG\boldsymbol{\alpha}_i,\nonumber\\
N\widehat{d}_i \bar{g}_{ik}\ntr \big(QG R_i \wt{B}^{\la i\ra} \boldsymbol{\alpha}_i\boldsymbol{\beta}_i^*G\big)=\widehat{d}_i \bar{g}_{ik}\boldsymbol{\beta}_i^*GQG R_i \wt{B}^{\la i\ra} \boldsymbol{\alpha}_i, \label{022460}
\end{align}
for some random variables $\widehat{d}_i$, with $|\widehat{d}_i|\prec 1$, for all $i\in\llbracket1, N\rrbracket$, and some $i$-independent diagonal matrix~$Q$ with $\|Q\| \prec 1$, which can be $A$, $D$, $A-z$, $(A-z)^2$ or the product of some of them.

With the above facts, we can prove Lemmas~\ref{lem.022201},~\ref{lem.022405} and~\ref{lem.022407} in the sequel.
\begin{proof}[Proof of Lemma~\ref{lem.022201}] Using the fact that $\Delta_G(i,k)$ is a sum of the terms of the form in~(\ref{022401}), we see that the left side of~(\ref{022410}) is the sum of the terms of the form
\begin{align*}
&\frac{1}{N}\widehat{d}_i\sum_{k:k\neq i}\bar{g}_{ik} \mathbf{e}_k^* \wt{B}^{\la i\ra}G\boldsymbol{\alpha}_i\boldsymbol{\beta}_i^*\wt{B}^{\la i\ra} R_i G\mathbf{e}_i= \frac{1}{N}\widehat{d}_i\mathring{\mathbf{g}}_i^* \wt{B}^{\la i\ra}G\boldsymbol{\alpha}_i\boldsymbol{\beta}_i^*\wt{B}^{\la i\ra} R_i G\mathbf{e}_i=O_\prec(\frac{1}{N}),\nonumber\\
&\frac{1}{N}\widehat{d}_i\sum_{k:k\neq i}\bar{g}_{ik} \mathbf{e}_k^* \wt{B}^{\la i\ra}GR_i\wt{B}^{\la i\ra}\boldsymbol{\alpha}_i\boldsymbol{\beta}_i^*G\mathbf{e}_i=\frac{1}{N}\widehat{d}_i\mathring{ \mathbf{g}}_i^* \wt{B}^{\la i\ra}GR_i\wt{B}^{\la i\ra}\boldsymbol{\alpha}_i\boldsymbol{\beta}_i^*G\mathbf{e}_i=O_\prec(\frac{1}{N}),
\end{align*}
where we used $\mathring{\mathbf{g}}_i=\mathbf{g}_i-g_{ii}\mathbf{e}_i$,  the identities in~(\ref{030430}) and  the bound in~(\ref{030321}). Hence, we conclude the proof of Lemma~\ref{lem.022201}.
\end{proof}

\begin{proof}[Proof of Lemma~\ref{lem.022405}] Using that $ \sum_j d_j \Delta_{Z_j}(i,k)$ is a sum of such terms as in~(\ref{022460}) and $\sum_{k:k\neq i}\bar{g}_{ik}\mathbf{e}_k^*=\mathring{\mathbf{g}}_i^*$, we see that the left side of~(\ref{022461}) is the sum of the terms of the form
\begin{align*}
&\frac{1}{N^3}\sum_{i=1}^N\widehat{d}_i \mathring{\mathbf{g}}_i^*Q_iG\mathbf{e}_i \boldsymbol{\beta}_i^*\wt{B}^{\la i\ra} R_i GQG\boldsymbol{\alpha}_i=O_\prec(\Psi^4),\nonumber\\
&\frac{1}{N^3}\sum_{i=1}^N\widehat{d}_i \mathring{\mathbf{g}}_i^*Q_iG\mathbf{e}_i \boldsymbol{\beta}_i^*GQG R_i \wt{B}^{\la i\ra} \boldsymbol{\alpha}_i=O_\prec(\Psi^4),
\end{align*}
where we used $\mathring{\mathbf{g}}_i=\mathbf{g}_i-g_{ii}\mathbf{e}_i$,~(\ref{030430}) and~(\ref{030321}). This completes the proof of Lemma~\ref{lem.022405}.
\end{proof}

\begin{proof}[Proof of Lemma~\ref{lem.022407}] Using that $\Delta_G(i,k)$ is the sum of such terms as in~(\ref{022401}) and $\sum_{k:k\neq i}\bar{g}_{ik}\mathbf{e}_k^*=\mathring{\mathbf{g}}_i^*$, we see that the left side of~(\ref{022470})  is the sum of the terms of the form
\begin{align*}
&\frac{1}{N^2} \sum_{i=1}^N \widehat{d}_i \mathring{\mathbf{g}}_i^* Q_i\mathbf{g}_i \mathbf{e}_i^*G\boldsymbol{\alpha}_i\boldsymbol{\beta}_i^*\wt{B}^{\la i\ra} R_iG\mathbf{e}_i=O(\frac{1}{N}),\nonumber\\
&\frac{1}{N^2} \sum_{i=1}^N \widehat{d}_i \mathring{\mathbf{g}}_i^* Q_i\mathbf{g}_i \mathbf{e}_i^*GR_i\wt{B}^{\la i\ra}\boldsymbol{\alpha}_i\boldsymbol{\beta}_i^*G\mathbf{e}_i=O(\frac{1}{N}),
\end{align*}
where we used the fact $\mathring{\mathbf{g}}_i=\mathbf{g}_i-g_{ii}\mathbf{e}_i$,~(\ref{030430}) and~(\ref{030321}). 
Hence, we conclude the proof of Lemma~\ref{lem.022407}.
\end{proof}

\section{}\label{the Appendix C}

In this appendix, we discuss the case when both $\mu_\alpha$ and $\mu_\beta$  (\cf~(\ref{le assumptions convergence empirical measures})) are convex combinations of two point masses. Without loss of generality (up to shifting and scaling), we may assume that~$\mu_\alpha$ and~$\mu_\beta$ have the form
\begin{align}
&\mu_\alpha=\xi\delta_1+(1-\xi)\delta_0,\qquad\quad \mu_\beta=\zeta\delta_{\theta}+(1-\zeta)\delta_0,\label{081210}
\end{align}
with real parameters $\xi,\zeta$ and $\theta$ satisfying
\begin{align*}
 \theta\neq 0,\qquad\quad \xi,\zeta\in \Big(0,\frac{1}{2}\Big], \qquad \quad\xi\leq \zeta,\qquad \quad (\theta, \xi,\zeta)\neq \Big(-1,\frac12,\frac12\Big). 
\end{align*}
Recall the domains $\mathcal{S}_{\mathcal{I}}(a,b)$ in~\eqref{le domain S}. For given (small) $\varsigma,\gamma>0$, we set
\begin{align}
\mathcal{S}_\mathcal{I}^{\varsigma}(a,b)&\deq\bigg\{z\in \mathcal{S}_{\mathcal{I}}(a,b): \varsigma|z-1|\geq\max\Big\{\sqrt{{\rm{d_L}}(\mu_A,\mu_\alpha)},\sqrt{{\rm{d_L}}(\mu_B,\mu_\beta)}\Big\} \bigg\}\\
\widetilde{\mathcal{S}}_\mathcal{I}^{\varsigma}(a,b)&\deq\mathcal{S}_\mathcal{I}^{\varsigma}(a,b) \cap\bigg\{z\in\mathbb{C}: |z-1|\geq \frac{N^{\gamma}}{(N\eta)^{\frac14}}\bigg\}.\label{081501}
\end{align}
The following theorem presents the local law under the setting~(\ref{081210}).
\begin{thm}[Local law in the two point masses case] \label{thm. two point masses} Let $\mu_\alpha,\mu_\beta$ be as in~(\ref{081210}), with fixed $\xi,\zeta$ and~$\theta$.
Assume that the sequence of matrices $A$ and $B$ satisfy~(\ref{le bounded A and B}). Fix any compact nonempty interval $\mathcal{I}\subset \mathcal{B}_{\mu_\alpha\boxplus \mu_\beta}$. Then there is a constant $b>0$ such that if
 \begin{align}
  \dL(\mu_A,\mu_\alpha)+\dL(\mu_B,\mu_\beta)\le b,
 \end{align}
holds, then the following statements hold:

 \begin{itemize}[noitemsep,topsep=0pt,partopsep=0pt,parsep=0pt]
\item[$(i)$] If $\mu_\alpha\neq \mu_\beta$, then
\begin{align}
\Big|\frac{1}{N}\sum_{i=1}^N d_i \Big(G_{ii}(z)-\frac{1}{a_i-\omega_B(z)}\Big)\Big|\prec \Psi^2 \label{040801}
\end{align}
holds uniformly for all $z\in\mathcal{S}_{\mathcal{I}}(0,1)$.
Consequently,
\begin{align}
\sup_{\mathcal{I}'\subset\mathcal{I}}\Big|\mu_H(\mathcal{I}')-\mu_{A}\boxplus\mu_{B}(\mathcal{I}')\Big|\prec \frac{1}{N}, \label{040802}
\end{align}
where the supremum is over all subintervals of $\mathcal{I}$.
\item[$(ii)$] If $\mu_\alpha=\mu_\beta$, then, for sufficiently small $\varsigma>0$,
\begin{align}
\Big|\frac{1}{N}\sum_{i=1}^N d_i \Big(G_{ii}(z)-\frac{1}{a_i-\omega_B(z)}\Big)\Big|\prec \frac{\Psi^2}{|z-1|^2} \label{040803}
\end{align}
holds uniformly for all $z\in\wt{\mathcal{S}}^\varsigma_{\mathcal{I}}(0,1)$. Thus, for any nonempty compact interval $\widetilde{\mathcal{I}}\subset \mathcal{I}\setminus\{1\}$,
\begin{align}
\sup_{\mathcal{I}'\subset{\widetilde{\mathcal{I}}}}\Big|\mu_H(\mathcal{I}')-\mu_{A}\boxplus\mu_{B}(\mathcal{I}')\Big|\prec \frac{1}{N}, \label{040804}
\end{align}
where the supremum is over all subintervals of $\widetilde{\mathcal{I}}$.
\end{itemize}		
\end{thm}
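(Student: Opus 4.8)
The plan is to derive both parts of Theorem~\ref{thm. two point masses} by re-running the fluctuation averaging machinery of Sections~\ref{s. proof of main theorem}--\ref{a.A}, feeding it the two-point-mass versions of the stability estimates and of the entrywise local law taken from~\cite{BES15b}; the only genuinely new work is the uniform bookkeeping of inverse powers of $|z-1|$ in case~(ii). \textbf{Case (i): $\mu_\alpha\neq\mu_\beta$.} When $\mu_\alpha,\mu_\beta$ are distinct measures of the form~\eqref{081210}, the subordination system~\eqref{060101} is a system of polynomial equations, and the analysis of~\cite{BES15b,BES15} yields, for $\mathcal{I}\subset\mathcal{B}_{\mu_\alpha\boxplus\mu_\beta}$ and $A,B$ with~\eqref{le b equation}, a uniform $S$-stability constant on $\mathcal{S}_{\mathcal{I}}(0,1)$ together with $|\omega_A'|,|\omega_B'|\lesssim1$, $|\omega_A|,|\omega_B|\lesssim1$ and $\im\omega_A,\im\omega_B\gtrsim1$ there --- this is the only place where the hypothesis that $\mu_\alpha$ or $\mu_\beta$ be supported at more than two points entered the main text, through Lemma~\ref{cor.080601}. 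With this and the two-point-mass form of Proposition~\ref{thm. entrywise estimate}, the proofs of Theorem~\ref{thm.fluctuation averaging} and Theorem~\ref{thm: subordination functions} apply verbatim and give~\eqref{040801}, and~\eqref{040802} follows from the Helffer--Sj\"ostrand argument exactly as Theorem~\ref{thm.convergence rate} followed from Theorem~\ref{thm.fluctuation averaging}.

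\textbf{Case (ii): $\mu_\alpha=\mu_\beta$.} After the normalization~\eqref{081210} the subordination system degenerates only at $E=1$, and~\cite{BES15b} makes this quantitative: on $\wt{\mathcal{S}}^{\varsigma}_{\mathcal{I}}(0,1)$ one still has $\im\omega_A,\im\omega_B\gtrsim1$ and $|a_i-\omega_B|\gtrsim1$, but the $S$-stability constant of $\Phi_{\mu_A,\mu_B}(\omega_A,\omega_B,z)=0$ blows up by a fixed inverse power of $|z-1|$ as $z\to1$, so that the a priori entrywise law reads $\max_{ij}|G_{ij}-\delta_{ij}(a_i-\omega_B)^{-1}|\prec\Psi/|z-1|$ and, via~\eqref{021701} and~\eqref{entrywise estimate}, $|\Upsilon|\prec\Psi/|z-1|$; more generally each $\Psi^k$ occurring in a main-body estimate acquires an inverse power of $|z-1|$. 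The restriction $\varsigma|z-1|\ge\max\{\sqrt{\dL(\mu_A,\mu_\alpha)},\sqrt{\dL(\mu_B,\mu_\beta)}\}$ in the definition of $\mathcal{S}^{\varsigma}_{\mathcal{I}}$ ensures that replacing the $\mu_{\alpha,\beta}$-system by the $\mu_{A,B}$-system (an $O(\dL)$ perturbation, amplified by the stability constant) does not overwhelm these $|z-1|$-weighted bounds, while the further restriction $|z-1|\ge N^{\gamma}(N\eta)^{-1/4}$ defining $\wt{\mathcal{S}}^{\varsigma}_{\mathcal{I}}$ is exactly what is needed for the bootstrap/fixed-point step to close: it says $\Psi/|z-1|^2\ll1$, i.e.\ the a priori subordination error $|\omega^c-\omega|\lesssim S\,\Psi/|z-1|\sim\Psi/|z-1|^{2}$ stays inside the stability neighbourhood of $(\omega_A,\omega_B)$.

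With these inputs I would copy Sections~\ref{s. proof of main theorem}--\ref{a.A} essentially line by line. The algebraic identities $C_1(z)\equiv C_2(z)\equiv0$ used in the proof of Theorem~\ref{thm: subordination functions} are $z$-independent and survive, so the improvement~\eqref{022801} upgrades to $|\Upsilon|\prec\Psi^2/|z-1|^{2}$; Proposition~\ref{lem.021902}, Lemma~\ref{pro. estimate of high order moment} and Lemmas~\ref{lem. 022301}--\ref{lem.022407} all go through with each $\Psi^k$ replaced by the appropriate $\Psi^k/|z-1|^{\ell}$, producing $|\tfrac1N\sum_i d_iZ_i|\prec\Psi^2/|z-1|^{2}$ and hence, through the weights $1/((\omega_B-a_i)m_A(\omega_B))$ and the improved stability bound (Proposition~4.1 of~\cite{BES15}), the claim~\eqref{040803}, first with $\omega_B^c$ and then with $\omega_B$. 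Finally~\eqref{040804} follows from the Helffer--Sj\"ostrand argument applied on a compact $\wt{\mathcal{I}}\subset\mathcal{I}\setminus\{1\}$, where $|z-1|\gtrsim1$ makes the prefactor $|z-1|^{-2}$ harmless and the spectral parameter can be pushed down to $\eta_{\mathrm{m}}=N^{-1+\gamma}$ while staying inside $\wt{\mathcal{S}}^{\varsigma}_{\mathcal{I}}(0,1)$, with monotonicity of the Green function handling $\eta\le\eta_{\mathrm{m}}$ as in~\eqref{le mono 1}--\eqref{le mono 2}.

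\textbf{Main obstacle.} The combinatorial heart of the argument is unchanged from the main body; the real work is the uniform tracking of the $|z-1|^{-1}$ powers. One must check that each error term created in Lemmas~\ref{lem. 022301},~\ref{lem.022302},~\ref{lem.022303},~\ref{lem.022405} and~\ref{lem.022407} loses no more powers of $|z-1|^{-1}$ than the target bounds $\Psi^2/|z-1|^{2}$, $\Psi^4/|z-1|^{4}$ can absorb, that all the resulting thresholds on $\eta$ and $|z-1|$ reduce to the single condition $|z-1|\ge N^{\gamma}(N\eta)^{-1/4}$, and --- the most delicate point --- that the precise inverse power of $|z-1|$ in the stability constant at $E=1$, extracted from the analysis in~\cite{BES15b}, is indeed compatible with the claimed bound~\eqref{040803}.
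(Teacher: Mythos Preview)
Your proposal is correct and follows the same approach as the paper's own proof, which simply states that the argument is ``nearly the same'' as the main case once one feeds in the two-point-mass entrywise estimates from Proposition~B.1 of~\cite{BES15b} and then applies Helffer--Sj\"ostrand. In fact you supply considerably more detail than the paper does about the $|z-1|$-bookkeeping in case~(ii); the paper omits all of that.
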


Notice that the result  deviates from the general case from only if $\mu_\alpha=\mu_\beta$
due to an instability at $z=1$ in the free convolution $\mu_\alpha\boxplus \mu_\alpha$.

\begin{rem} For $\mu_\alpha,\mu_\beta$ given in~(\ref{081210}), the regular bulk $\mathcal{B}_{\mu_\alpha\boxplus\mu_\beta}$ can be written down explicitly, in terms of $\xi,\zeta$ and $\theta$, see (B.2) and (B.3) in~\cite{BES15b} for more detail.
\end{rem}

\begin{proof}
For ~(\ref{040801}) and~(\ref{040803}), analogously to~(\ref{fluctuation averaging}),  one needs to exploit the fluctuation average of the  $G_{ii}$'s, namely, that the fluctuation of the (weighted) average of  $G_{ii}$'s is typically as small as the square of the fluctuation of $G_{ii}$'s. Note that the estimate of the individual $G_{ii}$'s of the two point masses case  has been obtained in Proposition B.1 of~\cite{BES15b}. Since the proofs of~(\ref{040801}) and~(\ref{040803}) are nearly the same as~(\ref{fluctuation averaging}), given Proposition B.1 of~\cite{BES15b}, we omit the details. Then the convergence
rates~(\ref{040802}) and~(\ref{040804}) follow from~(\ref{040801}) and~(\ref{040803}), respectively,  via a routine application of the Helffer-Sj\"{o}strand functional calculus; see~\eg Section~7.1 of~\cite{EKYY13}. This completes the proof.
\end{proof}

\end{document}